\documentclass[11pt]{amsart}
\usepackage[T1]{fontenc}
\usepackage{lmodern,microtype}
\usepackage[margin=1in]{geometry}
\usepackage{amsmath,amssymb,amsthm,mathrsfs}
\usepackage[colorlinks=true,linkcolor=blue,citecolor=blue,urlcolor=blue]{hyperref}
\numberwithin{equation}{section}
\newtheorem{theorem}{Theorem}[section]
\newtheorem{lemma}[theorem]{Lemma}
\newtheorem{proposition}[theorem]{Proposition}
\newtheorem{corollary}[theorem]{Corollary}
\theoremstyle{definition}
\newtheorem{definition}[theorem]{Definition}
\theoremstyle{remark}
\newtheorem{remark}[theorem]{Remark}
\newtheorem*{remark*}{Remark}
\newcommand{\PP}{\mathbb P}
\newcommand{\CC}{\mathbb C}
\newcommand{\ZZ}{\mathbb Z}
\newcommand{\OO}{\mathcal O}
\DeclareMathOperator{\Pic}{Pic}

\DeclareMathOperator{\RatCurves}{RatCurves}
\DeclareMathOperator{\Fitt}{Fitt}
\DeclareMathOperator{\Aut}{Aut}
\title[Homogeneity of contact Fano manifolds]
{Transverse rational curves and the homogeneity of contact Fano manifolds}
\author{Zhengyu Hu}
\address{Mathematical Sciences Research Center, Chongqing University of Technology,
Chongqing 400054, China}
\email{zhengyuhu16@gmail.com}

\author{Kefeng Liu}
\address{Mathematical Sciences Research Center, Chongqing University of Technology,
Chongqing 400054, China}
\email{kefengliu@cqut.edu.cn}

\author{Xueyuan Wan}
\address{Mathematical Sciences Research Center, Chongqing University of Technology,
Chongqing 400054, China}
\email{xwan@cqut.edu.cn}

\date{28 October 2026}
\hypersetup{
  pdftitle={Transverse rational curves and the homogeneity of contact Fano manifolds},
  pdfauthor={Zhengyu Hu, Kefeng Liu, Xueyuan Wan},
  pdfsubject={Version 2, 28 October 2026}
}
\subjclass[2020]{14J45, 14M17, 14D15, 53D10}
\keywords{contact Fano manifolds, rational curves, varieties of minimal
rational tangents, tangent bundles, K-polystability}
\begin{document}
\begin{abstract}
We prove that every smooth complex projective contact Fano manifold
is homogeneous under its connected contact automorphism group.
The argument extends and simplifies the incidence method of our earlier
proof of the LeBrun--Salamon conjecture. After the classical reductions,
balanced transverse rational maps through every point yield tangent
sections extending across the nodal boundary. These establish
irreducibility and nonlinearity of the general variety of minimal
rational tangents associated with contact lines, enabling Hwang's
descent theorem to produce global vector fields generating the tangent
bundle. Consequences include very ampleness of the contact line bundle,
embeddedness of every balanced rational map, K\"ahler--Einstein existence,
K-polystability, and the LeBrun--Salamon conjecture.
\end{abstract}
\maketitle

\section{Introduction}

Let $Y$ be a connected smooth complex projective variety of dimension
$2n+1$, with $n\ge0$. A contact structure is a subbundle $D\subset TY$ for which the
quotient $L=TY/D$ is a line bundle and the bracket pairing
$\omega:\wedge^2D\to L$, also called the Levi form, is nondegenerate.
For $n=0$, one has $D=0$ and the contact condition is vacuous.
The quotient map $\theta$ gives
\[
0\longrightarrow D\longrightarrow TY\xrightarrow{\theta}L
\longrightarrow0,\qquad -K_Y=(n+1)L.
\]
Thus $Y$ is Fano if and only if $L$ is ample. The contact Fano
classification problem asks whether every such variety is homogeneous
\cite{Beauville}. Its relation to positive quaternionic--K\"ahler
geometry is given by the twistor construction
\cite{Salamon82,LeBrun}: the twistor space of a compact positive
quaternionic--K\"ahler manifold is contact Fano and carries a
K\"ahler--Einstein metric. We prove the following statement for every
contact Fano manifold.

\begin{theorem}\label{cf:main}
Let $(Y,D)$ be a smooth complex projective contact Fano manifold.
Then the connected contact automorphism group
$\Aut(Y,D)^0$ acts transitively on $Y$. In particular, $TY$ is globally
generated and the contact line bundle $L$ is very ample.
\end{theorem}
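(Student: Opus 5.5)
The plan is to follow the abstract and run the classical reductions first. By the theorems of Kebekus--Peternell--Sommese--Wiśniewski and Demailly, either $Y\cong\PP(T^*M)$ for a Fano (indeed homogeneous-reducible) manifold $M$, or $Y\cong\PP^{2n+1}$, or $\Pic Y=\ZZ L$. The first two cases are homogeneous and can be handled directly. The cases $n=0$ and $Y=\PP^{2n+1}$ are trivial, and for $\PP(T^*M)$ one reduces to $M$ being $\PP^{n+1}$, using the classification of such projectivized cotangent bundles that carry contact Fano structures. So we may assume $\Pic Y=\ZZ L$ and $n\ge1$. By Kebekus, $Y$ is then covered by contact lines: rational curves $C$ with $L\cdot C=1$, tangent to $D$. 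Fix the minimal dominating family $\mathcal K\subset\RatCurves^n(Y)$ of these lines. For a general point $x$, the VMRT $\mathcal C_x\subset\PP(D_x)$ is Legendrian with respect to $\omega_x$, and for a general member $f:\PP^1\to Y$ we have the standard splitting $f^*TY=\OO(2)\oplus\OO(1)^{n-1}\oplus\OO^{n+1}$, with $f^*D$ containing $\OO(2)\oplus\OO(1)^{n-1}\oplus\OO^{n}\oplus\OO(-1)$ in the appropriate form.

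Next, construct the \emph{balanced transverse rational maps}. These are maps $g:\PP^1\to Y$, obtained by smoothing chains (nodal unions) of two or more contact lines through a given point, such that $g^*TY$ is balanced (all summands of degree $\ge1$ after a suitable twist, or $g^*L$-degree large with $g^*TY$ ample modulo the tangent direction), and such that $g$ is transverse to $D$ at general points. Bend-and-break, together with the rational chain connectedness guaranteed by $\rho=1$, ensures that such smoothings exist through \emph{every} point, not just a general one. I would then consider the incidence variety $\mathcal I=\{(g,t)\}$ with evaluation map to $Y$. Sections of $g^*TY$ coming from $H^0(\PP^1,g^*TY)$, pushed along the family, give tangent sections on an open set of $Y$. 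The key claim, and the one I expect to be the main obstacle, is that these sections extend across the nodal boundary: as a smoothing degenerates to a chain $C_1\cup C_2$, the limiting sections must remain regular at the node. I would attack this with a Fitting-ideal computation. The cokernel of the evaluation of sections on the degenerate fibre has Fitting ideal $\Fitt_0$ supported away from the node, because the balanced splitting forces $H^1(g^*TY(-p-q))=0$ uniformly in the family, and hence the sheaves $\pi_*\mathrm{ev}^*TY$ are locally free across the boundary.

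With those sections in hand, the next step is to derive the VMRT properties. Irreducibility of $\mathcal C_x$ for general $x$ follows because the tangent directions at $x$ of the lines appearing in limiting chains form the image of a connected (irreducible) family of smoothings. Nonlinearity follows because a linear Legendrian $\mathcal C_x=\PP^{n-1}$ would make the distribution spanned by $\mathcal C_x$ integrable, and the transverse sections contradict this: their brackets would land in $D$, contradicting transversality. Then apply Hwang's descent theorem. For a Fano manifold of Picard number one whose general VMRT is irreducible, nonlinear and nondegenerate (nondegeneracy in $\PP(D_x)$ follows from the Legendrian property and $\omega$ being nondegenerate), local vector fields preserving the VMRT structure extend to global ones, and the prolongation of $\mathcal C_x$ produces enough of them. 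Their contact components span $TY$ at general points, and by the previous paragraph also at every point. Consequently $H^0(TY)\otimes\OO\to TY$ is surjective everywhere.

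Finally, global generation of $TY$ means that the orbit of $\Aut(Y)^0$ through every point is open, so by connectedness there is a single orbit. Since $D$ is the unique contact structure when $\Pic Y=\ZZ$ (as $H^0(TY)=H^0(L)$ via $\theta$), $\Aut(Y)^0=\Aut(Y,D)^0$, which gives transitivity. For very ampleness, $Y=G/P$ is rational homogeneous with $\Pic=\ZZ L$, so $L$ is the ample generator, which is very ample on $G/P$ by Borel--Weil (it is in fact the adjoint embedding into $\PP(\mathfrak g)$).
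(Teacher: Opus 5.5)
Your outline follows the paper's architecture, but the steps that carry the weight are missing or would fail. The central one is the tangent section. You never say which sections of $g^*TY$ you take, and an arbitrary choice does not assemble into a section over an incidence variety. Hwang's theorem needs a section of $e^*TY$ on a \emph{projective} $W$ surjecting onto $Y$. The paper's section is canonical and uses contact degree exactly two. For a balanced transverse $f$ with $f(a)=z$ fixed, contract the two-point tensor $q(a,b)=(u(a)-u(b))^2\partial_{u,a}\otimes\partial_{u,b}$ with the contact differential at $a$, trivialized by a fixed vector of $L_z$. This gives a vector field $V_{f,a}$ with a double zero at $a$ and no other zero, and one sets $S_z(f,a,b)=df_b(V_{f,a}(b))$.

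Your boundary mechanism is false. On a nodal fibre $C_1\cup C_2$ of two contact lines, $H^1(T'(-p-q))\simeq T_yY/((P_1)_o+(P_2)_o)$, which is never zero because both positive parts lie in $D_y$. Local freeness of $\pi_*\mathrm{ev}^*TY$ does hold, but it gives no canonical section. What actually happens is that $q$ acquires a simple pole along the boundary: $tq$ is regular in the local model $xy=t$. This pole is cancelled because the image of the relative contact differential in $\omega_{\mathcal C/S}\otimes\mathcal L$ is pulled back from the base and vanishes at the node, hence is divisible by $t$. The pullback property holds because that bundle has degree zero on every fibre component, precisely because the contact degree is two.

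Likewise, bend-and-break and rational chain connectedness only give very free curves of uncontrolled degree. The degree-two balanced map needs two contact lines through a general point with complementary positive parts and $\omega_y(a,b)\ne0$. It also needs the first-order computation showing that the smoothing direction supplies the missing evaluation direction $L_y$ exactly through $\omega_y(a,b)$. Reaching every point then uses that chains of at most two contact lines from any $y$ sweep out a set of dimension at most $2n$.

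The VMRT and descent steps also do not work as written. Connectedness of the family of smoothings says nothing about irreducibility of $Z_x$: if $Z_x$ had two components, pairs of lines with branches in different components would smooth just as well. The paper instead proves that $p:\mathcal V\to Y$ is smooth everywhere:
\begin{enumerate}
\item the local fields $(de_z)^{-1}S_z$ on \'etale loci lift to fields preserving $\mathcal V$, by matching first jets with deformations of contact lines;
\item their values span $T_yY$ at every $y$, using $2n+1$ transverse conics through $y$ with spanning tangent lines;
\item so the nonsmooth locus, cut out by a Fitting ideal that these fields preserve, is empty;
\item Stein factorization then yields a finite \'etale cover of a Fano manifold, which is trivial.
\end{enumerate}
Nonlinearity then follows at once from irreducibility, $\dim Z_y=n-1>0$, and Kebekus's theorem that $Z_y$ spans $\PP(D_y)$. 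Your integrability argument gives no contradiction, since brackets of sections of a Legendrian distribution automatically lie in $D$. Nondegeneracy is also not a consequence of the Legendrian condition, which a Lagrangian linear $\PP^{n-1}\subset\PP(D_y)$ also satisfies.

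Finally, Hwang's descent theorem does not say that VMRT-preserving local fields extend and that prolongations produce enough of them; prolongations only bound infinitesimal automorphisms and produce no fields. It says that $H^0(Y,TY)\to H^0(W,e^*TY)$ is an isomorphism for every surjective $e$ from a projective $W$. Applied to the $S_z$, this is what produces global vector fields spanning every $T_yY$. Note also that for $n=1$ the VMRT is finite, so you need the classification of contact Fano threefolds to reduce to $n\ge2$.
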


Here $\Aut(Y,D)^0$ denotes the identity component of the algebraic
group of automorphisms of $Y$ preserving $D$.

In dimension at least three, the classical classification of
homogeneous contact manifolds \cite{Wolf} (see also
\cite[Theorem~0.1 and \S2]{Beauville}) makes Theorem~\ref{cf:main}
equivalent to the following statement: $(Y,D)$ is contact-isomorphic
to the adjoint variety $Y_{\mathfrak g}\subset\PP(\mathfrak g)$ of a
simple complex Lie algebra $\mathfrak g$. Here $Y_{\mathfrak g}$ is
the unique closed adjoint orbit, endowed with its canonical contact
structure. Under the contact isomorphism $\varphi:Y\to Y_{\mathfrak g}$,
the contact line bundle is identified as
\[
L\simeq\varphi^*\bigl(\OO_{\PP(\mathfrak g)}(1)|_{Y_{\mathfrak g}}\bigr).
\]
Conversely, the adjoint group acts transitively on $Y_{\mathfrak g}$
and preserves its canonical contact structure.

Our previous work \cite{HLW} proved the LeBrun--Salamon conjecture by
an incidence method developed independently of \cite{OpenAI}.
The present paper simplifies this independent approach and extends it
from twistor spaces to all contact Fano manifolds. We work directly
with tangent sections and infinitesimal preservation, applying Hwang's
descent theorem \cite{HwangDescent} in place of deformation rigidity
for families of maps. Twistor fibres provide embedded balanced
transverse rational curves through every point. For a general contact
Fano manifold, we first construct balanced transverse maps;
their embeddedness will follow from the descent argument below.

The preprint \cite[Theorem~1.1]{OpenAI} proves the same adjoint-variety
classification and also the LeBrun--Salamon conjecture. Its Section~8
gives the final identification directly; we invoke the classical
classification after proving contact homogeneity.

The common geometric input is the existence of a suitable pair of
contact lines and a first-order smoothing of their union
(\cite[Sections~3--4]{OpenAI}; see Subsections~\ref{cf:sec-lines}--
\ref{cf:sec-smoothing} below). We give this input in full, while
shortening the choice of the line pair by applying Buczy\'nski's secant
and join theorem \cite[Proposition~3.5]{Buczynski}.

Both arguments also extend the canonical two-point tensor, after
contraction with the contact differential, across the nodal boundary;
this construction is already present in \cite{HLW}. We use the
resulting sections on incidence varieties and apply Hwang's descent
theorem. We establish the required geometry before homogeneity: the
contact-line tangent-direction projection is smooth with irreducible
fibres, and its general fibre, the variety of minimal rational
tangents (VMRT), is nonlinear (Proposition~\ref{cf:nonlinear}).

The route in \cite[Sections~5--7]{OpenAI} descends the common contact
value of the extended tensors to a symmetric section of $L\boxtimes L$
on $Y\times Y$, and uses diagonal jets to obtain contact transitivity.

\hyperref[cf:proof-main]{The proof} is given in
Subsection~\ref{cf:sec-homogeneity}.

Applying Theorem~\ref{cf:main} to twistor spaces recovers the following
result of \cite{HLW}.

\begin{corollary}
\label{cf:wolf}
Let $(X^{4n},g)$ be a compact connected positive
quaternionic--K\"ahler manifold, with $n\ge2$. After multiplying $g$
by a positive constant, $(X,g)$ is isometric to a Wolf space.
\end{corollary}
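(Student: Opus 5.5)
The plan is to derive Corollary~\ref{cf:wolf} from Theorem~\ref{cf:main} through the twistor correspondence and the classical classification, so that no new geometry is needed. Let $(X^{4n},g)$ be compact, connected, positive quaternionic--K\"ahler with $n\ge2$, and let $Z\to X$ be its twistor space. This is an $S^2$-bundle over $X$ of complex dimension $2n+1$. By Salamon \cite{Salamon82} and LeBrun \cite{LeBrun}, $Z$ is a compact complex manifold with a holomorphic contact structure $D\subset TZ$ and a K\"ahler--Einstein metric of positive scalar curvature. Hence $Z$ is a Fano manifold, and by the Kodaira embedding theorem it is a smooth complex projective contact Fano manifold of dimension $2n+1\ge5$. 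Theorem~\ref{cf:main} then applies, so $\Aut(Z,D)^0$ acts transitively on $Z$. By the classical classification \cite{Wolf,Beauville} recalled after Theorem~\ref{cf:main}, $(Z,D)$ is contact-isomorphic to an adjoint variety $Y_{\mathfrak g}\subset\PP(\mathfrak g)$ for some simple complex Lie algebra $\mathfrak g$.

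It remains to recover $(X,g)$ up to scaling from $Z$ with its contact structure. I would carry this out in three steps.

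First, the twistor lines are the fibres of $Z\to X$. These are smooth rational curves $C$ that are transverse to $D$ and have normal bundle $\OO(1)^{2n}$. They also carry a real structure coming from the antiholomorphic involution $\sigma$ of $Z$, which acts as the antipodal map on each fibre.

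Second, by LeBrun's inverse construction, $(X,g)$ up to homothety is determined by the triple $(Z,D,\sigma)$. Concretely, $X$ is the space of $\sigma$-invariant twistor lines, and the metric is determined up to a positive constant by the conformal structure together with the quaternionic condition. Consequently $\mathrm{Isom}(X,g)$ is identified with the group of $\sigma$-commuting holomorphic contact automorphisms of $Z$.

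Third, use homogeneity. Since $\Aut(Z,D)^0$ is the adjoint group $G$ of $\mathfrak g$, the isometry group $\mathrm{Isom}(X,g)^0$ is a real form $K$ of $G$. I would identify $K$ with the compact real form: it is compact because $X$ is compact, and its complexification is all of $\mathfrak g$. The compact group $K$ acts transitively on $Y_{\mathfrak g}$ because $Y_{\mathfrak g}=G/P$ with $G=KP$. It therefore acts transitively on the set of twistor lines and hence on $X$, so $X$ is a compact homogeneous quaternionic--K\"ahler manifold. Alekseevskii and Wolf classified these, and each is a Wolf space $K/(K'\cdot\mathrm{Sp}(1))$ attached to the highest root of $\mathfrak g$. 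Finally, since the Einstein constant normalizes the scaling, $g$ agrees with the symmetric metric after multiplication by a positive constant.

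The main obstacle is the third step. The difficulty is justifying that the real structure $\sigma$ on $Z\cong Y_{\mathfrak g}$ is compatible with the compact real form, so that the isometries act transitively on $X$. My first approach would be the following argument. Conjugation by $\sigma$ induces an antiholomorphic involution of $G=\Aut(Z,D)^0$, whose fixed group is a real form $G_\sigma$. The group $G_\sigma$ acts on $X$ by isometries and, for compact $X$, must be compact. A compact real form of a complex simple group acts transitively on the flag variety $G/P$, which gives transitivity on $Z$ and hence on $X$. Alternatively, one can avoid $\sigma$ entirely: the parameter space of twistor-type lines in $Y_{\mathfrak g}$ is $G$-homogeneous, and $X$ embeds in it as a totally real compact submanifold, which should be a closed orbit of the compact form.
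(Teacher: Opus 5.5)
Your route is correct in outline, but after the identification $(Z,D)\simeq Y_{\mathfrak g}$ it differs from the paper's. The paper never tracks the real structure. The classification \cite{Wolf,Beauville} identifies $(Z,D)$ with the twistor space of a Wolf space. LeBrun's uniqueness theorem \cite[Corollaries~3.3--3.4]{LeBrun} then says that a positive quaternionic--K\"ahler manifold is determined up to homothety by its twistor space as a complex contact manifold alone, since the real structure is recovered from the K\"ahler--Einstein metric. That closes the proof in one line. You use only the weaker statement that $(X,g)$ is determined by the triple $(Z,D,\sigma)$, so you are forced to control an unknown real structure on $Y_{\mathfrak g}$. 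The step you flag as the main obstacle is exactly what the stronger uniqueness theorem removes.

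Your substitute does work, and it runs as follows.
\begin{itemize}
\item The Killing fields of $X$ form a real form of $\operatorname{Lie}\Aut(Z,D)^0=H^0(Z,L)\simeq\mathfrak g$; this correspondence goes back to \cite{Salamon82}.
\item This real form is the Lie algebra of the compact group $\mathrm{Isom}(X,g)$ (Myers--Steenrod), so it is the compact real form.
\item The corresponding connected subgroup is therefore a maximal compact subgroup of $G$, transitive on $G/P$ and hence on $X$.
\item Alekseevskii's theorem, that compact homogeneous quaternionic--K\"ahler manifolds are symmetric, finishes the argument. The scale is fixed because invariant metrics on an irreducible symmetric space are unique up to a constant.
\end{itemize}

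This buys extra information: $\mathrm{Isom}(X,g)^0$ is the compact real form of $\Aut(Z,D)^0$. It also avoids uniqueness of K\"ahler--Einstein metrics. The cost is an additional classification theorem and twistor bookkeeping that you only sketch. You should justify two points:
\begin{itemize}
\item $G_\sigma^0$ preserves the family of twistor fibres. These fibres form a connected component of the real locus of the space of lines, so continuity suffices.
\item $G_\sigma^0$ acts by isometries rather than homotheties, which follows by comparing volumes.
\end{itemize}

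Your alternative that avoids $\sigma$ is only heuristic. Nothing forces a compact totally real submanifold of the space of twistor-type lines to be an orbit of a compact form, so do not rely on it.
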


Here a Wolf space is a compact simply connected irreducible
quaternionic--K\"ahler symmetric space with its standard invariant
metric \cite{Wolf,Salamon82}. The \hyperref[cf:proof-wolf]{proof below}
combines Theorem~\ref{cf:main} with the classification of homogeneous
contact manifolds and LeBrun's uniqueness theorem.

For arbitrary contact Fano manifolds, homogeneity also gives the
following metric and stability consequences.

\begin{corollary}
\label{cf:ke-polystability}
Under the hypotheses of Theorem~\ref{cf:main}, $Y$ is a rational
homogeneous variety. It admits a K\"ahler--Einstein metric of positive
Ricci curvature, and $(Y,-K_Y)$ is K-polystable.
\end{corollary}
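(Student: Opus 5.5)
We derive Corollary~\ref{cf:ke-polystability} from Theorem~\ref{cf:main} together with standard facts about compact homogeneous Kähler manifolds and about Kähler--Einstein metrics.

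\textbf{Step 1: $Y$ is rational homogeneous.} By Theorem~\ref{cf:main}, the connected linear algebraic group $G=\Aut(Y,D)^0$ acts transitively on $Y$, so $Y\simeq G/H$ for a closed subgroup $H$. Since $Y$ is Fano, it is simply connected and rationally connected. By the Borel--Remmert theorem, a compact homogeneous Kähler manifold is a product of a complex torus and a rational homogeneous space $G'/P$. The torus factor must be trivial, because $H^0(Y,\Omega^1_Y)=0$ on a Fano manifold (equivalently, because of simple connectivity). Hence $Y\simeq G'/P$ with $G'$ semisimple and $P$ parabolic.

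Alternatively, one can argue intrinsically. Replace $G$ by its image in $\Aut(Y)$ and take its Levi decomposition. The unipotent radical acts trivially on a projective homogeneous space of a reductive quotient, so one reduces to a semisimple group. The stabilizer then contains a Borel subgroup by Borel's fixed point theorem applied to the orbit, since an orbit is projective exactly when the stabilizer is parabolic.

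In dimension at least three, this also follows directly from the identification with the adjoint variety $Y_{\mathfrak g}$ recorded after Theorem~\ref{cf:main}. The case $n=0$ is $Y=\PP^1$, which is rational homogeneous.

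\textbf{Step 2: Kähler--Einstein metric.} On $Y=K/(K\cap P)$, with $K$ a maximal compact subgroup of $G'$, the standard construction applies. Every $K$-invariant Kähler class contains a unique $K$-invariant Kähler form. The $K$-invariant form $\mathrm{Ric}$ of any $K$-invariant metric is determined by $c_1(Y)$, using the Koszul--Matsushima description of invariant forms via characters of $P$. Taking the invariant representative $\omega$ of $2\pi c_1(Y)$ gives $\mathrm{Ric}(\omega)=\omega$ by uniqueness of invariant representatives. Alternatively, one can cite Matsushima's and Koszul's classical theorem that generalized flag manifolds carry invariant Kähler--Einstein metrics of positive Ricci curvature.

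\textbf{Step 3: K-polystability.} This follows from the established direction of the Yau--Tian--Donaldson correspondence: a Fano manifold admitting a Kähler--Einstein metric is K-polystable (Berman, building on Tian and Donaldson). Applying this with the metric from Step 2 gives that $(Y,-K_Y)$ is K-polystable.

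All three steps are citations of classical results, so no step is a genuine obstacle. The only point needing care is Step 1: one must ensure that the transitive group is linear algebraic, which holds because $\Aut(Y)$ of a Fano manifold is a linear algebraic group, and that there is no torus factor, which holds by simple connectivity of Fano manifolds.
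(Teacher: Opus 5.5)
Your proposal is correct and follows the paper's route: first rational homogeneity from Theorem~\ref{cf:main}, then Matsushima's invariant K\"ahler--Einstein metric on $G/P$, then Berman's theorem for K-polystability. The one difference is in Step~1: you invoke Borel--Remmert and kill the torus factor via $h^{1,0}(Y)=0$, whereas the paper uses Borel's fixed-point theorem to show that the solvable radical of $\Aut(Y,D)^0$ acts trivially; your ``intrinsic'' variant should likewise use the full solvable radical rather than the unipotent radical, because removing only the latter leaves a reductive group, not a semisimple one.
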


The \hyperref[cf:proof-ke-polystability]{proof} combines homogeneity
with the existence of invariant
K\"ahler--Einstein metrics on rational homogeneous varieties
\cite{Matsushima72} and Berman's theorem \cite{Berman16}.

For $\dim Y=2n+1$, we call a map $f:\PP^1\to Y$ \emph{balanced} if
\[
f^*TY\simeq\OO(2)\oplus\OO(1)^{\oplus2n}.
\]
The next corollary combines embeddedness of balanced maps with the
existence of a twistor family, supplied by LeBrun's correspondence
\cite[Theorem~A]{LeBrun}.

\begin{corollary}\label{cf:embedded-bt}
Let $(Y,D)$ be as in Theorem~\ref{cf:main}, with $\dim Y=2n+1$.
\begin{enumerate}
\renewcommand{\labelenumi}{\textup{(\arabic{enumi})}}
\item Every balanced map $f:\PP^1\to Y$ is a closed immersion, with
$N_{f(\PP^1)/Y}\simeq\OO(1)^{\oplus2n}$.
Every point of $Y$ lies on such a curve $C$ satisfying
$\theta|_{TC}:TC\xrightarrow{\sim}L|_C$.
\item For $n\ge1$, these transverse curves may be chosen as the fibres
of a $C^\infty$ twistor submersion onto a positive
quaternionic--K\"ahler $4n$-manifold.
\end{enumerate}
\end{corollary}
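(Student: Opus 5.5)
We derive Corollary~\ref{cf:embedded-bt} from Theorem~\ref{cf:main} together with the classification of homogeneous contact manifolds. By Theorem~\ref{cf:main} and \cite{Wolf,Beauville}, for $n\ge1$ we may identify $(Y,D)$ with an adjoint variety $Y_{\mathfrak g}\subset\PP(\mathfrak g)$, with $L=\OO(1)|_{Y}$ very ample. For $n=0$ we have $Y=\PP^1$ with $L=\OO(2)$, and a balanced map satisfies $f^*TY\simeq\OO(2)$, so $\deg f=1$ and $f$ is an isomorphism. Both parts of (1) are then trivial in this case, so we assume $n\ge1$.

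For part (1), let $f$ be balanced. First, $\deg f^*L=\deg f^*(-K_Y)/(n+1)=(2+2n)/(n+1)=2$. Next we check that $f$ is unramified. The differential $df:T\PP^1=\OO(2)\to f^*TY$ is nonzero, and if it vanished at some point it would factor through $\OO(2)(-p)=\OO(1)$. This by itself is no contradiction, since $\OO(1)$ does map into $\OO(1)$ summands. Instead we compose with $\theta$, obtaining $\OO(2)\to f^*L\simeq\OO(2)$. We claim this composition is nonzero: otherwise $f$ would be tangent to $D$, i.e.\ a Legendrian curve. We then plan to rule out Legendrian balanced maps as follows. For a Legendrian curve, $df(T\PP^1)\subset f^*D$, and the Levi form identifies $f^*D/\text{(saturation of the image)}$ with the dual of the image twisted by $f^*L$, giving a summand of degree $\deg f^*L-2=0$ inside $f^*D$. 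But $f^*D$ is a subbundle of $\OO(2)\oplus\OO(1)^{2n}$ of rank $2n$ whose quotient is $f^*L\simeq\OO(2)$, so $f^*D\simeq\OO(1)^{2n}$, and all of its subbundles and quotients have positive degree. This is the contradiction. Hence $\theta\circ df$ is a nonzero map $\OO(2)\to\OO(2)$, i.e.\ an isomorphism, so $df$ is injective everywhere and $\theta|_{TC}:TC\to L|_C$ is an isomorphism.

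Then $f^*L=\OO(2)$ and $f^*\OO_{\PP(\mathfrak g)}(1)$ has degree $2$, so $f$ composed with the embedding is either birational onto a conic or a double cover of a line. A double cover of a line would be ramified, which is excluded, so $f$ is birational onto a conic and hence a closed immersion onto a smooth conic $C$. The normal bundle is $N=f^*TY/\OO(2)$. Since the $\OO(2)$ factor maps isomorphically onto $f^*L$ under $\theta$, the splitting gives $N\simeq f^*D\simeq\OO(1)^{2n}$. Finally, existence through every point follows from homogeneity: Subsection~\ref{cf:sec-smoothing} (or the balanced transverse maps constructed earlier) provides one balanced curve, and translating it by $\Aut(Y,D)^0$, which acts transitively, moves it through every point.

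Part (2) is the main obstacle, since it asks for a global $C^\infty$ twistor structure rather than algebraic facts. We plan to use LeBrun's correspondence \cite[Theorem~A]{LeBrun}. Its hypotheses require a compact complex contact Fano manifold carrying a Kähler--Einstein metric (supplied by Corollary~\ref{cf:ke-polystability}) together with a real structure. Alternatively, and more concretely, $Y_{\mathfrak g}$ is the twistor space of the Wolf space $G_c/(K\cdot Sp(1))$, where $G_c$ is the compact form: the fibres of $Y\to G_c/(K\cdot Sp(1))$ are $Sp(1)$-orbits, which are transverse conics of the type in (1). This gives the required submersion onto a positive quaternionic--Kähler $4n$-manifold.
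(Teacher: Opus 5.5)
There is a genuine error in your proof of (1): the lemma that a balanced map can never be Legendrian is false, and the step you use to prove it is invalid.

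The identification $f^*D\simeq\OO(1)^{\oplus2n}$ holds only when $f^*\theta$ is nonzero on the degree-two summand of $f^*TY$, that is, exactly when $f$ is transverse. Suppose instead that $f$ is horizontal. Then $df$ lands in the $\OO(2)$ summand, because $\operatorname{Hom}(\OO(2),\OO(1))=0$, so $f^*\theta$ kills that summand and restricts to a surjection $\OO(1)^{\oplus2n}\to\OO(2)$. This gives
\[
f^*D\simeq\OO(2)\oplus\OO(1)^{\oplus(2n-2)}\oplus\OO.
\]
This bundle is compatible with Levi self-duality $f^*D\simeq(f^*D)^\vee\otimes\OO(2)$, and it admits the nonzero map to $\OO$ you construct, so no contradiction arises. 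Concretely, take $\PP^{2n+1}$ with its standard contact structure and $L=\OO(2)$; the corollary covers this case. There every line is balanced, and the lines $\PP(W)$ with $W$ isotropic are Legendrian.

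Three of your steps rest on this false lemma:
\begin{enumerate}
\item your proof that $f$ is unramified;
\item your derivation of $N\simeq\OO(1)^{\oplus2n}$ from ``the $\OO(2)$ factor maps isomorphically onto $f^*L$'';
\item your production of transverse curves through every point by translating an arbitrary balanced curve.
\end{enumerate}
In addition, Subsection~\ref{cf:sec-smoothing} applies only when $n\ge2$ and $\Pic(Y)=\ZZ[L]$, so it supplies nothing on the classical models.

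The repair is the argument you abandoned, with the twist in the correct direction. If $df$ vanished at $p$, it would extend to a map $\OO(2)(p)=\OO(3)\to f^*TY$. But $\operatorname{Hom}(\OO(3),\OO(2)\oplus\OO(1)^{\oplus2n})=0$. Equivalently, as in the paper: $\operatorname{Hom}(\OO(2),\OO(1))=0$ forces the nonzero $df$ isomorphically onto the $\OO(2)$ summand. This gives unramifiedness and $N_f\simeq\OO(1)^{\oplus2n}$ with no transversality assumption. Your line/conic argument via very ampleness then yields the closed immersion, exactly as in Proposition~\ref{cf:degree-two-images}.

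Transverse curves through every point must then be produced separately. The paper uses the twistor fibres, which are transverse quasi-lines and also cover the classical models. In the primitive Picard case, Corollary~\ref{cf:every-point} already provides them.

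For (2), your concrete route is a valid alternative to the paper's use of the K\"ahler--Einstein metric together with LeBrun's Theorem~A. You invoke Wolf's identification of $Y_{\mathfrak g}$ with the twistor space of the corresponding Wolf space, whose fibres are $L$-conics. It costs the classification plus Wolf's correspondence, whereas LeBrun's route needs only the metric. One correction: Theorem~A does not assume a real structure. It constructs the twistor fibration from the K\"ahler--Einstein metric alone.
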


In real dimension four, quaternionic--K\"ahler means self-dual Einstein
\cite[Definition~2.3]{LeBrun}.

An embedded smooth rational curve with this normal bundle is called a
\emph{quasi-line}. Proposition~\ref{cf:bt-embedding} proves embeddedness
directly by incidence and descent for transverse maps in the primitive
Picard case. After homogeneity, Proposition~\ref{cf:degree-two-images}
gives the assertion for every balanced map, without transversality.
The \hyperref[cf:proof-embedded-bt]{proof of the corollary} is given at
the end of Subsection~\ref{cf:sec-homogeneity}.

\begin{remark}\label{cf:weak-fano}
The ampleness of the contact line bundle in Theorem~\ref{cf:main}
cannot be replaced by nefness and bigness. Let $S=\PP^1\times\PP^1$
and $Y=\PP(T^*S)$, where projectivization parametrizes lines.
This is a smooth projective contact threefold with contact line bundle
$L=\OO_Y(1)$. Since $TS=\OO(2,0)\oplus\OO(0,2)$ is globally
generated, so is $L$, and the projective-bundle formula gives
\[
L^3=\int_S\bigl(c_1(TS)^2-c_2(TS)\bigr)=8-4=4.
\]
Thus $L$ is nef and big. The summand $\OO(-2,0)\subset T^*S$
defines a section $s:S\to Y$ with $s^*L=\OO(2,0)$, so
$L\cdot C=0$ for $C=s(\{p\}\times\PP^1)$, with $p\in\PP^1$. Consequently
$-K_Y=2L$ is nef and big but not ample: $Y$ is weak Fano.
The anticanonical morphism is generically finite but contracts $C$.
It is $\Aut(Y)^0$-equivariant, so transitivity would force all its
fibres to have the same dimension. Hence $Y$ is not homogeneous.
Here $\rho(Y)=3$; in Picard number one, a nef and big line bundle
is already ample.
\end{remark}

\noindent\textbf{Proof strategy and organization.}
The classical reductions of Lemma~\ref{cf:reduction} leave
$\dim Y=2n+1\ge5$ and $\Pic(Y)=\ZZ[L]$.
Two kinds of rational curves play different roles:
maps of $L$-degree two produce tangent sections, while contact lines
of $L$-degree one supply the varieties of minimal rational tangents
(VMRTs) used to descend those sections.

Section~\ref{cf:sec-curves} starts with two contact lines through a
general point $y$, chosen with complementary positive tangent
subspaces and tangent vectors $a,b$ with $\omega_y(a,b)\ne0$.
Write $s$ for the smoothing parameter and $\operatorname{Ev}$ for
two-point evaluation. With the nodal source fixed, its differential
has cokernel $L_y$ (Lemma~\ref{cf:evaluation-cokernel}). The induced
map from the smoothing direction to this cokernel satisfies
\[
\overline{d\operatorname{Ev}}(\partial_s)=\omega_y(a,b)
\]
(Lemma~\ref{cf:fixed-end-obstruction}). Its nonvanishing supplies the
missing evaluation direction and yields a balanced map
(Proposition~\ref{cf:first-conic}).

The two-line boundary estimate then verifies the general propagation
criterion of Proposition~\ref{cf:very-free-propagation}. First-jet
deformations give balanced transverse maps through every point, with
tangent lines spanning each tangent space of $Y$
(Corollary~\ref{cf:every-point}).

Section~\ref{cf:sec-tangent-sections} uses these maps to construct
tangent sections and prove homogeneity. At a nodal fibre, the relative
contact differential cancels the pole of the two-point tensor
(Lemma~\ref{cf:cancellation}), extending the incidence sections to
projective sources (Theorem~\ref{cf:incidence}). On the \'etale locus
of each evaluation morphism, its section becomes a local vector field.
These fields preserve the total space of contact-line tangent
directions by Proposition~\ref{cf:jets}. Their values span each
tangent space, so the smoothness criterion of
Lemma~\ref{cf:smoothness} makes the tangent-direction projection
smooth. Stein factorization and the Fano condition give irreducible
fibres; the dimension and span of the general VMRT then imply
nonlinearity (Proposition~\ref{cf:nonlinear}).

With these hypotheses established, Hwang's theorem descends the same
incidence sections to global vector fields. The general criterion of
Proposition~\ref{cf:generation} gives global generation of $TY$ and
homogeneity, and uniqueness of the contact structure gives contact
homogeneity. The metric and twistor consequences follow afterwards.
The propagation, smoothness and homogeneity criteria are formulated
independently of the contact structure.

\noindent\textbf{AI declaration.}  ChatGPT was used solely as an auxiliary tool for literature searches, language editing, and preliminary checks of mathematical constructions and arguments. The authors independently verified all proofs and mathematical claims and take full responsibility for the content of this paper.

\noindent\textbf{Conventions.}
All varieties, schemes and stacks are over $\CC$; a variety means an
integral separated scheme of finite type, unless stated otherwise.
Irreducible components carry their reduced structures; Hom schemes
and scheme-theoretic fibres retain their natural scheme structures.
We use $\PP(E)$
for one-dimensional subspaces of the fibres of $E$. For a morphism
$f:X\to Y$ between smooth varieties, $df:TX\to f^*TY$ denotes its
differential as a homomorphism of vector bundles. The pullback of the
contact form is denoted by $f^*\theta:f^*TY\to f^*L$.

\subsection*{Classical reductions}

\begin{lemma}\label{cf:reduction}
Let $(Y,D)$ be a smooth complex projective contact Fano manifold of
dimension $2n+1\ge3$, with contact line bundle $L$.
Either $(Y,D)$ is contact-isomorphic to projective space
$\PP^{2n+1}$ with its standard contact structure or to
$\PP(T^*\PP^{n+1})$ with its tautological contact structure, or
$n\ge2$ and $\operatorname{Pic}(Y)=\ZZ[L]$.
The two listed models are homogeneous under connected groups of
contact automorphisms.
\end{lemma}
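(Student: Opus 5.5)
This lemma collects classical results, so the plan is to assemble them in the standard order rather than build anything new. The order is: the Picard number dichotomy, then the identification of the index, then homogeneity of the two models.

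\textbf{Step 1 (Picard number).} First I invoke the theorem of Kebekus--Peternell--Sommese--Wi\'sniewski (contact Fano manifolds, via Mori theory) together with Demailly's theorem on integrability. Their combined statement is that if $\rho(Y)\ge2$, then $Y\simeq\PP(T^*M)$ for a smooth projective $M$ with its tautological contact structure. Since $Y$ is Fano, $T^*M$ must have $\OO_{\PP(T^*M)}(1)$ ample, that is, $TM$ ample. By Mori's solution of the Hartshorne conjecture this gives $M\simeq\PP^{n+1}$. So $\rho(Y)\ge2$ yields the second model.

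\textbf{Step 2 ($\rho=1$: index).} Next suppose $\rho(Y)=1$, and write $L=mH$ with $H$ the ample generator of $\Pic(Y)$. Then $-K_Y=(n+1)mH$. Kobayashi--Ochiai bounds the index by $2n+2$. Equality gives $Y\simeq\PP^{2n+1}$ with $L=\OO(2)$. The contact structure on projective space is unique up to projective equivalence, for instance by LeBrun--Salamon's uniqueness of contact structures on Fano manifolds with $\rho=1$, or directly because contact forms are sections of $\Omega^1(2)$, which form nondegenerate skew forms. Hence this is the standard structure.

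Otherwise $m=1$. Here I need the fact that $m\in\{1,2\}$ and that $m=2$ occurs only for projective space; this is Beauville's/Kebekus' observation that $m\ge2$ forces the index to be $\ge 2n+2$. So $L$ generates $\Pic(Y)$. For $n=1$, $Y$ is a Fano threefold of index $2$ with $\rho=1$ carrying a contact structure. By the classification of del Pezzo threefolds, combined with the requirement that contact threefolds with $\rho=1$ admit minimal rational curves of $L$-degree one through every point with the right normal bundle, only $\PP^3$ survives. This contradicts $m=1$, which gives $n\ge2$ in this branch. For $n=1$ I will alternatively cite Ye's theorem that contact Fano threefolds are $\PP^3$ or $\PP(T^*\PP^2)$. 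This is the step I expect to be the main obstacle, and I will cite rather than reprove it.

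\textbf{Step 3 (homogeneity of models).} $PSp_{2n+2}(\CC)$ acts transitively on $\PP^{2n+1}$ preserving the standard symplectic contact structure. $PGL_{n+2}(\CC)$ acts on $\PP^{n+1}$, and its induced action on $\PP(T^*\PP^{n+1})$, the incidence variety of points and hyperplanes, is transitive and preserves the tautological contact form since it is natural. Both groups are connected.
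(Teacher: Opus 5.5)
Your proposal is correct and follows essentially the paper's route: KPSW and Mori's theorem when $b_2(Y)\ge2$, Kobayashi--Ochiai plus the identification $H^0(\Omega^1_{\PP(V)}(2))\simeq\wedge^2V^*$ for $\PP^{2n+1}$, a cited threefold classification for $n=1$ (Ye's theorem in place of the paper's Buczy\'nski--Peternell reference), and the natural group actions on the two models. The one step you take for granted that the paper proves is torsion-freeness of $\Pic(Y)$ (Kodaira vanishing and Riemann--Roch give every torsion line bundle a nowhere-vanishing section), which is what upgrades $\rho(Y)=1$ to $\Pic(Y)=\ZZ[L]$; also, contact structures on $\PP^{2n+1}$ are not literally unique, so use your $\wedge^2V^*$ argument there rather than the uniqueness citation.
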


\begin{proof}
Kodaira vanishing gives $H^i(Y,\OO_Y)=0$ for $i>0$.
For a torsion line bundle $A$, the bundle $A\otimes K_Y^{-1}$ is ample,
so Kodaira vanishing also gives $H^i(Y,A)=0$ for $i>0$.
Riemann--Roch gives $\chi(A)=\chi(\OO_Y)=1$.
For a nonzero section $s$ of $A$ and $m>0$ with
$A^{\otimes m}\simeq\OO_Y$, the section $s^{\otimes m}$ is a nonzero
constant. Hence $s$ is nowhere zero and $A$ is trivial.
Thus $\operatorname{Pic}(Y)$ is free of rank $b_2(Y)$.

If $b_2(Y)>1$, \cite[Theorem~1.1]{KPSW} gives
$(Y,L)\simeq(\PP(T^*B),\OO(1))$ for a smooth projective
$(n+1)$-fold $B$; here projectivization parametrizes lines, whereas
\cite{KPSW} uses quotients. Since $L$ is ample, $TB$ is ample,
and Mori's theorem \cite{Mori} gives $B\simeq\PP^{n+1}$.
The description of contact forms in \cite[Proposition~2.14]{KPSW}
shows that the given contact structure is carried to the
tautological one by the projectivization of a bundle automorphism
of $T^*B$. The natural action of $\operatorname{PGL}_{n+2}$
on the resulting incidence variety is transitive and preserves
this contact structure.

Suppose now that $b_2(Y)=1$, and write
$\operatorname{Pic}(Y)=\ZZ[A]$, with $A$ ample, and $L=A^{\otimes k}$.
If $k\ge2$, the index $(n+1)k$ is at least $2n+2$.
The Kobayashi--Ochiai index theorem \cite{KO} then gives
$k=2$ and $Y\simeq\PP^{2n+1}$, with $L\simeq\OO(2)$.
If $n=1$, the same conclusion $Y\simeq\PP^3$ follows from
\cite[Theorem~1.2]{BuczynskiPeternell}, and again $L\simeq\OO(2)$.
On $\PP(V)$, the Euler sequence identifies
$H^0(\Omega^1_{\PP(V)}(2))$ with $\wedge^2 V^*$.
A contact form corresponds to a nondegenerate alternating form;
all such forms are linearly equivalent. Hence the given structure
is standard and its symplectic group acts transitively.
The remaining case is $k=1$ and $n\ge2$.
\end{proof}

\section{Transverse rational curves}\label{cf:sec-curves}

Throughout this section,
let $(Y,D)$ be a smooth complex
contact Fano manifold with $\dim Y=2n+1$, $n\ge2$, and
$\operatorname{Pic}(Y)=\ZZ[L]$. Write
\[
0\longrightarrow D\longrightarrow TY\xrightarrow{\theta}L
\longrightarrow0,\qquad -K_Y=(n+1)L.
\]
\subsection{Basic definitions and deformation facts}\label{cf:preliminaries}

The \emph{contact degree} of $f:\PP^1\to Y$ is $\deg f^*L$.
Since $\det TY\simeq-K_Y\simeq L^{\otimes(n+1)}$, one has
\begin{equation}\label{cf:degree-relation}
\deg f^*TY=(n+1)\deg f^*L.
\end{equation}
Such a map is \emph{free} if $f^*TY$ is globally generated, and
\emph{very free} if all its splitting summands have positive degree.
It is \emph{unramified} if $df$ is nonzero everywhere; this does not
assert injectivity on points. It is \emph{horizontal} if
$(f^*\theta)\circ df=0$, and \emph{transverse} at a point if this
contact differential is nonzero there.
A degree-one map is called a \emph{contact line}. It is birational
onto its image and horizontal, since
$\operatorname{Hom}(\OO(2),\OO(1))=0$.

\begin{lemma}\label{cf:degree-two-transversality}
A morphism $f:\PP^1\to Y$ of contact degree two is either everywhere
horizontal or everywhere transverse to $D$. In particular, transversality
at one point implies transversality everywhere.
\end{lemma}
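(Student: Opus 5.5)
The contact differential of $f$ is the composite
\[
\sigma:=(f^*\theta)\circ df:\ T\PP^1\longrightarrow f^*L,
\]
which is a homomorphism of line bundles on $\PP^1$. Hence it is a global section of
\[
\mathcal{H}om(T\PP^1,f^*L)\simeq f^*L\otimes T^*\PP^1\simeq\OO(\deg f^*L-2).
\]
When $f$ has contact degree two, this line bundle is $\OO(0)\simeq\OO_{\PP^1}$, so $\sigma$ is a constant section.

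If $f$ is horizontal at one point, then $\sigma$ vanishes there, and a constant section vanishing at one point is identically zero. So $f$ is everywhere horizontal. Otherwise $\sigma$ vanishes nowhere, which means $f$ is transverse to $D$ at every point.

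For the final sentence, suppose $f$ is transverse at some point. Then $\sigma\neq0$, so $\sigma$ is nowhere zero and $f$ is transverse everywhere.

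The only point needing care is the identification of $\sigma$ as a section of $\OO(\deg f^*L-2)$, using $T\PP^1\simeq\OO(2)$. This is also the reason a contact line, with $\deg f^*L=1$, is horizontal: there $\sigma$ lies in $H^0(\OO(-1))=0$. There is no real obstacle beyond this. The argument does not use the Picard-number assumption or the hypothesis $n\ge2$; it only needs $\deg f^*L=2$. Note also that $df$ may vanish at some points, but horizontality at such a point still forces $\sigma\equiv0$, which is consistent with the claim.
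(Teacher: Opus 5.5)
Your proof is correct and matches the paper's argument: the contact differential is a homomorphism $T\PP^1\simeq\OO(2)\to f^*L\simeq\OO(2)$, hence a constant section of $\OO_{\PP^1}$, and is therefore either zero or an isomorphism. Your side remarks are also accurate: the degree-one case is forced horizontal because $H^0(\OO(-1))=0$, and the argument does not use the Picard-number or $n\ge2$ hypotheses.
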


\begin{proof}
Since $f^*L\simeq\OO(2)$, the contact differential
\[
(f^*\theta)\circ df:T\PP^1\simeq\OO(2)\longrightarrow f^*L\simeq\OO(2)
\]
is either zero or an isomorphism.
\end{proof}

\begin{definition}\label{cf:balanced-definition}
A map $f:\PP^1\to Y$ is \emph{balanced} if
\[
f^*TY\simeq\OO(2)\oplus\OO(1)^{\oplus2n}.
\]
It is \emph{balanced transverse} if, in addition, the contact differential
$(f^*\theta)\circ df:T\PP^1\to f^*L$ is an isomorphism.
\end{definition}

A balanced map is very free and has contact degree two by
\eqref{cf:degree-relation}. Conversely, a very free map of contact
degree two is balanced: its $2n+1$ positive splitting degrees sum
to $2n+2$.
Since $\operatorname{Hom}(\OO(2),\OO(1))=0$, the nonzero differential of
a balanced map
identifies $T\PP^1\simeq\OO(2)$ with the degree-two summand of $f^*TY$.
Thus every balanced map is unramified, with normal bundle
$N_f=\operatorname{coker}(df)\simeq\OO(1)^{\oplus2n}$.
By Lemma~\ref{cf:degree-two-transversality}, transversality at one point
makes it balanced transverse.

\begin{lemma}\label{cf:deformation-facts}
Let $X$ be a smooth projective variety, and let $f:\PP^1\to X$.
\begin{enumerate}
\renewcommand{\labelenumi}{\textup{(\arabic{enumi})}}
\item\label{cf:hom-smoothness}
The morphism scheme $\underline{\operatorname{Hom}}(\PP^1,X)$ is separated
and locally of finite type. It has tangent space $H^0(\PP^1,f^*TX)$
at $[f]$ and is smooth there if
$H^1(\PP^1,f^*TX)=0$.
\item\label{cf:evaluation-smoothness}
Evaluation at distinct fixed points $p_1,\ldots,p_r$ is smooth at
$[f]$ if $H^1(f^*TX(-\sum_i[p_i]))=0$; its differential is
evaluation of sections. The scheme-theoretic fibre through $[f]$
has tangent space $H^0(f^*TX(-\sum_i[p_i]))$.
\item\label{cf:jet-smoothness}
Fix $p\in\PP^1$ and a local coordinate $u$ at $p$. The first-jet
evaluation is
\[
j_p^1:\underline{\operatorname{Hom}}(\PP^1,X)\longrightarrow TX,
\qquad [h]\longmapsto\bigl(h(p),dh_p(\partial_u)\bigr).
\]
Here $TX$ denotes the total space of the tangent bundle.
If $H^1(f^*TX(-2[p]))=0$, this map is smooth at $[f]$.
For $x=f(p)$, its restriction
\[
\underline{\operatorname{Hom}}(\PP^1,X;p\mapsto x)\longrightarrow T_xX,
\qquad [h]\longmapsto dh_p(\partial_u)
\]
is also smooth at $[f]$; the pointed Hom scheme is the scheme-theoretic
fibre of evaluation at $p$ over $x$.
\item\label{cf:degree-constancy}
In a projective flat family of curves, the fibre degree of a line
bundle is locally constant.
\end{enumerate}
\end{lemma}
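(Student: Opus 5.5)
These are standard facts from deformation theory of morphisms, so the plan is to reduce each item to a citable statement (Grothendieck, Mori, Kollár's \emph{Rational curves on algebraic varieties}, Ch.~I and II) and add the short argument each needs in our notation.

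For (1), I will realize $\underline{\operatorname{Hom}}(\PP^1,X)$ as an open subscheme of the Hilbert scheme of $\PP^1\times X$ by sending a map to its graph. Both source and target are projective, so this Hilbert scheme is a countable disjoint union of projective schemes. Hence the open subscheme is separated and locally of finite type. The tangent space at $[f]$ is $H^0(\PP^1,f^*TX)$, since first-order deformations of a morphism with fixed source and target are sections of $f^*TX$. Obstructions lie in $H^1(\PP^1,f^*TX)$, and the local dimension is at least $h^0-h^1$ (Kollár, Thm.~II.1.2). So vanishing of $H^1$ makes the point smooth of dimension $h^0$.

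For (2), I will apply the relative version with fixed points: $\underline{\operatorname{Hom}}(\PP^1,X;p_i\mapsto x_i)$ has tangent space $H^0(f^*TX(-\sum[p_i]))$ and obstructions in $H^1(f^*TX(-\sum[p_i]))$ (Kollár, Thm.~II.1.7). The scheme-theoretic fibre of evaluation is exactly this pointed Hom scheme, which gives the tangent-space claim. On tangent spaces the differential of evaluation is $\sigma\mapsto(\sigma(p_i))_i$. The restriction sequence
\[
0\to f^*TX(-\textstyle\sum[p_i])\to f^*TX\to\bigoplus_i T_{f(p_i)}X\to0
\]
shows that $H^1$-vanishing of the twist makes this differential surjective and also forces $H^1(f^*TX)=0$. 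The standard fibrewise criterion then gives smoothness of the map: smooth source (by (1)), smooth target, and smooth fibre of the expected dimension. Equivalently, one can verify the infinitesimal lifting property directly, since the obstruction to lifting with prescribed values lies in $H^1$ of the twist.

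For (3), I will treat the first-jet map the same way, but with the two-point condition replaced by the length-two subscheme $2[p]$. The fibre of $j^1_p$ over $(x,v)$ is the Hom scheme of maps with prescribed restriction to $\operatorname{Spec}\OO_{\PP^1,p}/\mathfrak m_p^2$. Its deformation theory has tangent space $H^0(f^*TX(-2[p]))$ and obstructions in $H^1(f^*TX(-2[p]))$; this is the relative Hom scheme over a fixed subscheme, as in Kollár's Thm.~II.1.7 with $B=2[p]$. The differential of $j^1_p$ is restriction to the first jet, $H^0(f^*TX)\to H^0(f^*TX\otimes\OO_{2[p]})$. The target has dimension $2\dim X$ and matches $T_{(x,dh(\partial_u))}(TX)$. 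This differential is surjective when the $H^1$ vanishes, so the same fibrewise criterion gives smoothness. For the pointed version, I restrict everything to the fibre over $x$. The pointed Hom scheme is the fibre of evaluation at $p$, and it is smooth at $[f]$ because $H^1(f^*TX(-[p]))$ is a quotient of $H^1(f^*TX(-2[p]))=0$. The induced map to $T_xX$ has differential $\sigma\mapsto\nabla_u\sigma(p)$ on sections vanishing at $p$, which is well defined there. It is surjective because $H^0(f^*TX(-[p]))\to f^*TX(-[p])|_p$ is onto by the same vanishing.

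For (4), this is the standard consequence of flatness: the Euler characteristic of a line bundle is locally constant in a flat projective family, and Riemann–Roch converts $\chi$ into the degree plus $1-p_a$, with $p_a$ itself locally constant (Hartshorne III.9.9). The main obstacle is only bookkeeping in (3): correctly identifying the jet map's differential and its target tangent space at a point of the total space $TX$. The rest is citation.
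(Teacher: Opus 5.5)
Your proposal is correct and takes essentially the same route as the paper. The paper's proof simply cites the standard deformation theory of Hom schemes in Koll\'ar's book, with sections of $f^*TX$ vanishing at the prescribed points or to order two at $p$, and deduces (4) from local constancy of $\chi$ together with $\deg M=\chi(M)-\chi(\OO_C)$ on a fibre; you additionally spell out the exact-sequence bookkeeping the paper leaves implicit, namely that vanishing of the twisted $H^1$ forces $H^1(f^*TX)=0$ and makes the evaluation and jet differentials surjective.
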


\begin{proof}
The first three assertions are standard deformation theory
\cite[I.2.16--I.2.17]{Kollar}, with sections vanishing at the prescribed
points or to order two at $p$. The last follows from constancy of
Euler characteristic and $\deg M=\chi(M)-\chi(\OO_C)$ on a fibre $C$.
\end{proof}

For a projective variety $X$ and a curve class $\beta\in H_2(X,\ZZ)$,
write $\underline{\operatorname{Hom}}_\beta(\PP^1,X)$ for the locus
of morphisms $f$ with $f_*[\PP^1]=\beta$, with its induced scheme structure.
The homology class is locally constant in families, and fixing $\beta$
fixes the degree for a projective embedding of $X$. Thus this is an
open-and-closed quasi-projective subscheme of finite type.
In the contact setting, we
write $\underline{\operatorname{Hom}}_2(\PP^1,Y)$ for the locus of maps
of contact degree two, also a quasi-projective scheme of finite type.
It is open and closed by
Lemma~\ref{cf:deformation-facts}\textup{(\ref{cf:degree-constancy})}.

We write $\overline{\mathcal M}_{0,k}(X,\beta)$ for the proper
Deligne--Mumford stack of stable maps and $\overline M_{0,k}(X,\beta)$
for its projective coarse moduli scheme \cite{BM,FP}.

Assume now that $X$ is smooth. For a map $f:\PP^1\to X$ of class
$\beta$ birational onto its image,
the stable-map automorphism group is trivial. Let $r=1$ or $2$, take
distinct source points $p_1,\ldots,p_r$, and put
$P=\sum_i[p_i]$ and $\mathbf x=(f(p_1),\ldots,f(p_r))$.
We may fix the source points by reparametrization. By
Lemma~\ref{cf:deformation-facts}\textup{(\ref{cf:evaluation-smoothness})},
deformations fixing their images are sections of $f^*TX(-P)$, and reparametrizations
preserving the source points contribute $df\,H^0(T\PP^1(-P))$.
For $\mathrm{ev}:\overline M_{0,r}(X,\beta)\to X^r$,
\begin{equation}\label{cf:pointed-moduli-tangent}
T_{[f,p_1,\ldots,p_r]}\mathrm{ev}^{-1}(\mathbf x)
\simeq
\frac{H^0(f^*TX(-P))}{df\,H^0(T\PP^1(-P))}.
\end{equation}
Its dimension is $h^0(f^*TX(-P))-(3-r)$, computed from the splitting
of $f^*TX$ using $h^0(\OO(d))=\max\{d+1,0\}$.
If $H^1(f^*TX(-P))=0$, the evaluation fibre is smooth there, so this
is also its local dimension. Only the source points must be distinct;
their images may coincide.

\subsection{A pair of contact lines}\label{cf:sec-lines}

We adapt the line-pair and smoothing construction of
\cite[Sections~3--4]{OpenAI}. The secant argument below replaces its
cone-addition calculation by \cite[Proposition~3.5]{Buczynski}.

\begin{definition}[{Minimal rational component, \cite[Section~2]{HwangDescent}}]\label{cf:mrc-definition}
An irreducible component $\mathcal H\subset\RatCurves^n(Y)$ is called a
\emph{minimal rational component} if, for a general point $y\in Y$,
the subfamily $\mathcal H_y$ parametrizing its members through $y$,
with its reduced structure, is nonempty and proper.
\end{definition}

Here $\RatCurves^n(Y)$ denotes the normalized space of rational curves.
Choose a covering irreducible component $\mathcal K$ of contact lines
in this space, as provided by \cite[Section~2.3]{Kebekus}.
It is a normal quasi-projective variety, and is proper by
\cite[Remark~2.2]{Kebekus}, hence projective.
Its normalized universal family is projective, and its evaluation
map is proper and surjective.
Thus, for every $y\in Y$, the subfamily $\mathcal K_y$ is nonempty and,
being closed in the projective variety $\mathcal K$, is projective.
Hence $\mathcal K$ is a minimal rational component.

\begin{definition}[{Variety of minimal rational tangents, \cite[Section~2]{HwangDescent}}]\label{cf:vmrt-definition}
For $y\in Y$, the \emph{variety of minimal rational tangents} (VMRT)
associated with $\mathcal K$ is the reduced projective closed subscheme
\[
Z_y:=\overline{\bigl\{[T_yC]\mid [C]\in\mathcal K,\ y\in C,
\ C\text{ smooth at }y\bigr\}}_{\mathrm{red}}
\subset\PP(D_y).
\]
Here $[T_yC]$ denotes the tangent line of $C$ at $y$; the term VMRT
allows $Z_y$ to be reducible.
\end{definition}

\begin{theorem}[Contact lines and their VMRT]\label{cf:vmrt-facts}
For a general point $y\in Y$, the following hold.
\begin{enumerate}
\renewcommand{\labelenumi}{\textup{(\arabic{enumi})}}
\item\label{cf:vmrt-lines}
Every member of $\mathcal K$ through $y$ is smooth and its
parametrization is free \cite[Proposition~3.3 and Lemma~3.5]{Kebekus}.
\item\label{cf:vmrt-geometry}
$Z_y$ is smooth and Legendrian in $\PP(D_y)$, of dimension $n-1$
\cite[Theorem~3.1(ii)--(iii)]{Buczynski}.
\item\label{cf:vmrt-span}
The linear span of $Z_y$ is $\PP(D_y)$
\cite[Theorem~4.4]{Kebekus}.
\end{enumerate}
\end{theorem}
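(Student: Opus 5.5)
The statement assembles known results, so the plan is to check that the hypotheses of the cited theorems hold in our setting and then read off each item. I would first fix the general point $y$ uniformly. Each cited result holds on a nonempty Zariski-open (or at least dense constructible) subset of $Y$, and we work with a single covering component $\mathcal K$. So I take $y$ in the finite intersection of these loci, which is still general. I also record the standing hypotheses of this section: $\dim Y=2n+1$ with $n\ge2$, $\Pic(Y)=\ZZ[L]$, and contact lines of $L$-degree one. These are exactly the hypotheses of \cite{Kebekus} and \cite{Buczynski}, since $Y\not\simeq\PP^{2n+1}$ by Lemma~\ref{cf:reduction}.

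For item (1), I would invoke \cite[Proposition~3.3]{Kebekus}: for general $y$, every contact line through $y$ in $\mathcal K$ is smooth. Freeness of the normalization $f:\PP^1\to Y$ then follows from \cite[Lemma~3.5]{Kebekus}. As a sanity check, the splitting type can be computed directly. The map is horizontal, so $df$ lands in $f^*D$. From $\deg f^*TY=n+1$ and freeness one gets $f^*TY\simeq\OO(2)\oplus\OO(1)^{\oplus(n-1)}\oplus\OO^{\oplus(n+1)}$, since the Levi form pairs the positive part $\OO(2)\oplus\OO(1)^{n-1}$ of $f^*D$ against $\OO^{n}$ into $f^*L=\OO(1)$. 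This is where the ``complementary positive subspaces'' used later come from.

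For item (2), freeness and standard Mori theory give $\dim\mathcal K_y=\deg f^*TY-2=n-1$. The tangent map $\mathcal K_y\to\PP(T_yY)$ is then finite, and in fact an immersion by Kebekus' smoothness results. Because contact lines are horizontal, its image lies in $\PP(D_y)$. Smoothness and the Legendrian property of $Z_y$ with respect to the conformal symplectic form $\omega_y$ on $D_y$ are \cite[Theorem~3.1(ii)--(iii)]{Buczynski}. That theorem also shows the tangent map is injective (no two lines through $y$ share a tangent direction), so $Z_y$ is the image rather than a non-normal quotient.

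For item (3), I cite \cite[Theorem~4.4]{Kebekus}, which states that $Z_y$ is linearly nondegenerate in $\PP(D_y)$ whenever $\Pic(Y)=\ZZ[L]$. The main thing to verify in all three items is compatibility of conventions. Kebekus and Buczy\'nski define the VMRT through the tangent map on the full family $\mathcal K_y$, while we define $Z_y$ as a reduced closure. Item (1) resolves this: every member through $y$ is smooth at $y$, so the closure adds nothing new, and the two definitions agree for general $y$.
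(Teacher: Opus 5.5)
Your proposal is correct and matches the paper, which also establishes this theorem purely by citing \cite[Proposition~3.3, Lemma~3.5, Theorem~4.4]{Kebekus} and \cite[Theorem~3.1]{Buczynski}. Your observation that item (1) makes the reduced closure defining $Z_y$ agree with the image of the tangent map is a sensible consistency check. One harmless slip in your side computation: the Levi form pairs $f^*D$ with itself with degrees $a\leftrightarrow 1-a$, so $\OO(2)$ pairs with $\OO(-1)$ and $f^*D\simeq\OO(2)\oplus\OO(1)^{\oplus(n-1)}\oplus\OO^{\oplus(n-1)}\oplus\OO(-1)$; the positive part does not pair against $\OO^{\oplus n}$, but this computation is not used in the argument.
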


For general $y$, let $\widehat Z_y\subset D_y$ be the reduced affine cone
over $Z_y$. The Legendrian condition means that, for each
$0\ne v\in\widehat Z_y$, the tangent space $T_v\widehat Z_y$ is a
Lagrangian $n$-plane for the Levi pairing
$\omega_y:\wedge^2D_y\to L_y$.

\begin{remark*}
These properties will be used to construct a suitable pair of contact
lines below, and again in Subsection~\ref{cf:sec-homogeneity} to study
the VMRT projection and prove nonlinearity. The theorem does not
assert irreducibility of $Z_y$; for general $y$, this will be proved in
Proposition~\ref{cf:nonlinear}.
\end{remark*}

\begin{remark*}
For completeness, we record two general facts about minimal rational
curves and their VMRTs. Neither requires a contact structure.
\begin{enumerate}
\renewcommand{\labelenumi}{\textup{(\arabic{enumi})}}
\item \emph{Projective-bundle criterion.} Let
$\pi:\mathcal W\to B$ be a proper smooth morphism of finite-type
schemes whose geometric fibres are $\PP^1$. If a line bundle $A$ on
$\mathcal W$ restricts to $\OO_{\PP^1}(1)$ on every geometric fibre,
then cohomology and base change give a rank-two vector bundle
$E=\pi_*A$, and evaluation identifies
\[
\mathcal W\simeq\PP_B(E^\vee).
\]
Thus $\pi$ is a Zariski locally trivial $\PP^1$-bundle. For the
normalized universal family of contact lines above, the evaluation
pullback of $L$ supplies $A$.
\item \emph{Normalization via the tangent map.} Let $\mathcal H$ be a
minimal rational component on a smooth projective variety $X$, with
normalized universal family $\rho:\mathcal U_{\mathcal H}\to\mathcal H$
and evaluation $e:\mathcal U_{\mathcal H}\to X$. For general $x\in X$,
the scheme $e^{-1}(x)$ is smooth and projective, and the tangent map
\[
\tau_x:e^{-1}(x)\longrightarrow Z_x
\]
is finite and birational, where $Z_x$ is the VMRT of $\mathcal H$ at
$x$ \cite[Propositions~3.7,~4.3 and Theorem~4.18]{HwangVMRT}.
Thus $\tau_x$ is the normalization map; in particular, it is an
isomorphism if $Z_x$ is normal.
\end{enumerate}
\end{remark*}

\begin{lemma}\label{cf:positive-part}
\leavevmode
\begin{enumerate}
\renewcommand{\labelenumi}{\textup{(\arabic{enumi})}}
\item\label{cf:positive-part-splitting}
Every free contact line $f:\PP^1\to Y$ has the splitting
\begin{equation}\label{cf:line-splitting}
f^*TY\simeq\OO(2)\oplus\OO(1)^{\oplus(n-1)}
\oplus\OO^{\oplus(n+1)}.
\end{equation}
The sum $P_f$ of its positive-degree summands is independent of
the splitting and is a Lagrangian subbundle of $f^*D$.
\item\label{cf:positive-part-tangent}
For general $y\in Y$ and every $a\in\widehat Z_y\setminus\{0\}$,
there is a free parametrization $f:\PP^1\to Y$ of a smooth member
of $\mathcal K$ with
\[
f(0)=y,\qquad f'(0)=a,\qquad (P_f)_0=T_a\widehat Z_y.
\]
Here the derivative is taken in a fixed local parameter at $0$;
the last equality uses the natural identifications
$(f^*D)_0\simeq D_y$ and $T_aD_y\simeq D_y$.
\end{enumerate}
\end{lemma}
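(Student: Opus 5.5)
For part (1), let $f:\PP^1\to Y$ be a free contact line. Then $f^*TY\simeq\bigoplus\OO(a_i)$ with all $a_i\ge0$. By \eqref{cf:degree-relation}, $\sum a_i=n+1$. The differential $df$ is nonzero because $f$ is birational onto its image. This gives a nonzero map $\OO(2)\to f^*TY$, so some $a_i\ge2$.

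Next use horizontality: $df$ lands in $f^*D$. The Levi form gives $f^*D\simeq f^*D^\vee\otimes f^*L=(f^*D)^\vee(1)$. So the splitting degrees $d_j$ of $f^*D$ (rank $2n$) satisfy $\{d_j\}=\{1-d_j\}$. They pair off as $d$ and $1-d$, giving $n$ pairs.

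The exact sequence $0\to f^*D\to f^*TY\to\OO(1)\to0$ together with freeness of $f^*TY$ forces $d_j\ge -1$. Indeed, any $\OO(-k)$ summand of $f^*D$ with $k\ge2$ would give a negative summand of $f^*TY$, since $\operatorname{Ext}^1(\OO(1),\OO(-k))=H^1(\OO(-k-1))$ can at best raise one degree. A cleaner route is to consider the whole splitting. Each pair $(d,1-d)$ contributes $1$ to $\deg f^*D=n$. The tangent direction gives $\OO(2)\subset f^*D$ saturated, which forces a pair $(2,-1)$ in $f^*D$. Then $f^*TY$ is an extension of $\OO(1)$ by $f^*D$ and is nef, so the $\OO(-1)$ must be absorbed: the extension class pairs it with the quotient $\OO(1)$ to produce $\OO\oplus\OO$. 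If another pair $(d,1-d)$ with $d\ge2$ existed, there would be a second negative summand of degree $\le -1$. The quotient $\OO(1)$ can absorb only one negative summand of degree exactly $-1$, since $\operatorname{Ext}^1(\OO(1),\OO(-k))$ gives extensions with degrees $\le$ combined and $f^*TY$ has all $a_i\ge0$. Hence every other pair is $(1,0)$. This yields \eqref{cf:line-splitting}.

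The positive part $P_f$ is the image of $H^0(f^*TY(-1))\otimes\OO(1)$, so it is intrinsic. It lies in $f^*D$ because $\operatorname{Hom}(\OO(1)^{\oplus}\oplus\OO(2),\OO(1))$ composed with $\theta$ vanishes on the degree-$2$ part and on $\OO(1)$-summands of $f^*D$. Concretely, $P_f$ equals the positive part of $f^*D$, namely $\OO(2)\oplus\OO(1)^{n-1}$ of rank $n$. It is isotropic because $\omega$ restricted to $P_f$ is a section of $\wedge^2P_f^\vee\otimes\OO(1)$. That bundle has degrees $\le 1-2=-1$ on pairs of summands, so it has no sections. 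Being isotropic of rank $n$, $P_f$ is Lagrangian.

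For part (2), take $y$ general as in Theorem~\ref{cf:vmrt-facts} and $a\in\widehat Z_y\setminus\{0\}$. By definition of $Z_y$ and Theorem~\ref{cf:vmrt-facts}(\ref{cf:vmrt-lines}), there is a smooth member through $y$ with tangent $[a]$. By the remark on the tangent map, which is birational with smooth source and finite, the line is determined locally. Parametrize it freely with $f(0)=y$, then rescale the parameter so that $f'(0)=a$. To identify $(P_f)_0$, use the standard fact (Hwang) that the tangent space of the VMRT cone equals the positive part at $0$. Deform $f$ inside $\underline{\operatorname{Hom}}(\PP^1,Y;0\mapsto y)$, which is smooth there since $H^1(f^*TY(-1))=0$. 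The derivatives $f_t'(0)$ sweep out $\widehat Z_y$ near $a$. The differential of $[h]\mapsto h'(0)$ sends a section $\sigma\in H^0(f^*TY(-[0]))$ to its first-order coefficient at $0$. The image of this differential is exactly the fibre at $0$ of $P_f$, via sections of $\OO(2)(-1)$ and $\OO(1)(-1)$, while trivial summands give nothing. This gives $(P_f)_0\subseteq T_a\widehat Z_y$. Equality follows from dimensions, since $\dim\widehat Z_y=n$ by Theorem~\ref{cf:vmrt-facts}(\ref{cf:vmrt-geometry}).

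The main obstacle is the splitting argument in (1), namely excluding other pairs $(d,1-d)$. If that fails, I would instead use Kebekus's result that general contact lines are standard, which gives \eqref{cf:line-splitting} directly, and extend it to all free lines by semicontinuity plus the symplectic pairing above.
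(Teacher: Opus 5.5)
Your part (2) is essentially the paper's argument. All you need from the remark on the tangent map is that every point of $Z_y$ is the tangent direction of a free member smooth at $y$. Your proofs that $P_f$ is independent of the splitting (as the image of $H^0(f^*TY(-1))\otimes\OO(1)$) and that it is isotropic are also correct. For the splitting in (1) you take a different route: you first determine $f^*D$ from the self-duality $d\leftrightarrow1-d$, then recover $f^*TY$ from the extension class. This route can be completed, but three justifications must be replaced.

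\emph{The degree bounds.} The bounds $d_j\ge-1$ and ``at most one $\OO(-1)$'' do not follow from the remark that $\operatorname{Ext}^1$ ``raises at most one degree''. Prove them by pushing the contact sequence out along the projection of $f^*D$ onto the sum $N$ of its negative-degree summands. This gives a quotient $E$ of the globally generated bundle $f^*TY$ with $0\to N\to E\to\OO(1)\to0$. Hence $\deg N+1=\deg E\ge0$, so $N=0$ or $N\simeq\OO(-1)$. Your saturation claim for $df(\OO(2))$ is unjustified, since you do not know a priori that $f$ is an immersion. It is also unnecessary: a nonzero map $\OO(2)\to f^*D$ forces a summand of degree $\ge2$, hence, with the bounds, a pair $(2,-1)$.

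\emph{The reconstruction of $f^*TY$.} This needs the observation that $\operatorname{Ext}^1(\OO(1),f^*D)\simeq H^1(\OO(-2))$ is one-dimensional and carried by the $\OO(-1)$ summand. Its class is nonzero because $f^*TY$ is nef.

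\emph{The inclusion $P_f\subset f^*D$.} Your argument is circular: $\operatorname{Hom}(\OO(1),\OO(1))\ne0$, so nothing yet forces $\theta$ to vanish on $\OO(1)$-summands of $f^*TY$. Instead, the nonzero extension class makes the connecting map $H^0(\OO)\to H^1(f^*D(-1))$ injective. Hence $H^0(f^*D(-1))=H^0(f^*TY(-1))$, and $P_f$ lies in $f^*D$.

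Your semicontinuity fallback would not work. Splitting types only become more special under specialization, so knowing that general lines are standard says nothing about an arbitrary free contact line, which need not even lie in $\mathcal K$.

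The paper's ordering avoids extension classes and obtains the inclusion first. Freeness gives $f^*TY=P_f\oplus\OO^{\oplus r}$. The Levi isomorphism composed with the dual of $df$ gives a nonzero map $f^*D\to\OO(-1)$, so $f^*D$ is not globally generated. A nonzero restriction of $\theta$ to a positive summand would be an isomorphism $\OO(1)\to f^*L$. This would split the sequence and make $f^*D$ a direct summand of $f^*TY$, a contradiction. Hence $P_f\subset f^*D$ and $f^*D\simeq P_f\oplus\ker(\OO^{\oplus r}\to\OO(1))=P_f\oplus\OO(-1)\oplus\OO^{\oplus(r-2)}$. The pairing then forces $P_f\simeq\OO(2)\oplus\OO(1)^{\oplus(n-1)}$ and $r=n+1$. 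Your repaired route reaches the same splitting and additionally exhibits the pulled-back contact sequence as the nonsplit extension. It needs the pushout bound and a separate step for $P_f\subset f^*D$, whereas in the paper the inclusion comes first and the remaining bound is automatic.
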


\begin{proof}
For \textup{(\ref{cf:positive-part-splitting})}, write $T'=f^*TY$
and $D'=f^*D$, with $f^*L\simeq\OO(1)$.
We first prove \eqref{cf:line-splitting} (see also
\cite[Lemma~3.5]{Kebekus}). Freeness gives a splitting
$T'=P_f\oplus\OO^{\oplus r}$ into positive and trivial parts.
Since $\operatorname{Hom}(\OO(2),\OO(1))=0$, one has
$df:\OO(2)\to D'$. This map is nonzero because $f$ is birational
onto its image. The nondegenerate Levi form $\omega$ and the dual of $df$
give a nonzero composite
\[
D'\xrightarrow[\sim]{(f^*\omega)^\flat}
(D')^\vee\otimes f^*L
\xrightarrow{(df)^\vee\otimes\mathrm{id}_{f^*L}}
\OO(2)^\vee\otimes f^*L\simeq\OO(-1).
\]
Thus $D'$ is not globally generated.
If $f^*\theta$ were nonzero on a positive summand $\OO(a)$,
then $a=1$ and its restriction would be an isomorphism onto $f^*L$.
This would split the pulled-back contact sequence and make $D'$
globally generated, a contradiction. Hence $P_f\subset D'$.
The induced surjection $\OO^{\oplus r}\to\OO(1)$ has kernel
$\OO(-1)\oplus\OO^{\oplus(r-2)}$ by the evaluation sequence, so
\[
D'\simeq P_f\oplus\OO(-1)\oplus\OO^{\oplus(r-2)}.
\]
Since $D'\simeq(D')^\vee\otimes\OO(1)$, the splitting degrees
are paired by $a\leftrightarrow1-a$. The unique $\OO(-1)$ therefore
pairs with a unique $\OO(2)$, and each trivial summand with an
$\OO(1)$. Consequently
\[
P_f\simeq\OO(2)\oplus\OO(1)^{\oplus(r-2)},\qquad
2n=\operatorname{rank}D'=2r-2.
\]
Thus $r=n+1$, proving \eqref{cf:line-splitting}.

For any other splitting $T'=P'\oplus Q'$ into positive and trivial
parts, the projection $P_f\to Q'$ is zero. Hence $P_f\subseteq P'$,
and symmetry gives equality, proving independence of the splitting.
The restriction of $f^*\omega$ to $\wedge^2P_f$ vanishes, since
$\operatorname{Hom}(\OO(a+b),\OO(1))=0$ for $a,b\ge1$.
Together with $\operatorname{rank}P_f=n$, this makes its fibres
Lagrangian.

For \textup{(\ref{cf:positive-part-tangent})}, fix $y,a$ as above and a
local parameter $u$ at $0$. The pointed members of $\mathcal K$ through
$y$ form a projective parameter scheme. Their tangent-direction map
has closed image $Z_y$, so we may choose a free parametrization $f$
with $f(0)=y$ and $f'(0)=a$. Put $T'=f^*TY$.
The pointed Hom scheme is smooth at $f$ since $H^1(T'(-[0]))=0$
(Lemma~\ref{cf:deformation-facts}\textup{(\ref{cf:evaluation-smoothness})}).
On a smooth quasi-projective neighbourhood $\mathcal H_y$ whose maps
parametrize members of $\mathcal K$, differentiation at $0$ defines
\[
\tau:\mathcal H_y\longrightarrow\widehat Z_y\setminus\{0\},
\qquad g\longmapsto g'(0),
\]
with differential
\[
d\tau_f:H^0(T'(-[0]))\longrightarrow T_a\widehat Z_y,
\qquad s\longmapsto s'(0).
\]
Since $s(0)=0$, $s'(0)\in T_yY$ is independent of target coordinates.
By \eqref{cf:line-splitting}, the trivial summands contribute no
sections, while the positive summands allow arbitrary first derivatives.
Thus
\[
(P_f)_0=\operatorname{Im}(d\tau_f)\subseteq T_a\widehat Z_y.
\]
The target has dimension $n$ by
Theorem~\ref{cf:vmrt-facts}\textup{(\ref{cf:vmrt-geometry})}, and $P_f$
has rank $n$, proving equality. Together with
\textup{(\ref{cf:positive-part-splitting})}, this also recovers the
Legendrian property of $Z_y$.
\end{proof}

\begin{lemma}\label{cf:line-pair}
For a general point $y\in Y$, there are free contact lines
$f_1,f_2:\PP^1\to Y$ in $\mathcal K$ with
$f_1(0)=f_2(0)=y$ such that, writing
$a=f'_1(0)$ and $b=f'_2(0)$, one has
\begin{equation}\label{cf:pair-conditions}
(P_{f_1})_0\oplus(P_{f_2})_0=D_y,
\qquad \omega_y(a,b)\ne0.
\end{equation}
\end{lemma}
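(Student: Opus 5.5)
The plan is to reduce everything to the geometry of the cone $\widehat Z_y\subset D_y$ at a general point, using Lemma~\ref{cf:positive-part}\textup{(\ref{cf:positive-part-tangent})}. For $a\in\widehat Z_y\setminus\{0\}$ that lemma gives a free parametrization $f$ of a smooth member of $\mathcal K$ with $f(0)=y$, $f'(0)=a$ and $(P_f)_0=T_a\widehat Z_y$. It therefore suffices to find $a,b\in\widehat Z_y\setminus\{0\}$ with
\[
T_a\widehat Z_y\oplus T_b\widehat Z_y=D_y,\qquad \omega_y(a,b)\ne0,
\]
and then take $f_1,f_2$ to be the corresponding lines.

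\textbf{Choice of components.} Let $Z_y=Z_1\cup\dots\cup Z_m$ be the decomposition into irreducible components. Each $Z_i$ is smooth of dimension $n-1$ and Legendrian by Theorem~\ref{cf:vmrt-facts}\textup{(\ref{cf:vmrt-geometry})}.
\begin{enumerate}
\item Suppose $\omega_y(a,b)=0$ for all $a\in\widehat Z_i$, $b\in\widehat Z_j$ and all $i,j$. By bilinearity, the linear span of $\widehat Z_y$ would be $\omega_y$-isotropic, hence of dimension at most $n$. This contradicts Theorem~\ref{cf:vmrt-facts}\textup{(\ref{cf:vmrt-span})}.
\item Hence there are indices $i,j$, possibly equal, such that $\omega_y(a,b)\ne0$ on a nonempty Zariski open subset $U\subset\widehat Z_i\times\widehat Z_j$.
\end{enumerate}

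\textbf{Transversality of tangent spaces.} The condition $T_a\widehat Z_i\cap T_b\widehat Z_j=0$ is also open on the irreducible variety $\widehat Z_i\times\widehat Z_j$, since both spaces have dimension $n$ and $\dim D_y=2n$. So it is enough to show it is nonempty. By Terracini's lemma, for general $(a,b)$ the sum $T_a\widehat Z_i+T_b\widehat Z_j$ is the tangent space of the affine join cone of $Z_i$ and $Z_j$ at $a+b$. The sum is therefore all of $D_y$, hence direct by dimension count, exactly when the join $J(Z_i,Z_j)$ (the secant variety when $i=j$) fills $\PP(D_y)$.

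This filling is what I would extract from \cite[Proposition~3.5]{Buczynski}: for Legendrian subvarieties $Z_i,Z_j\subset\PP(D_y)$ that are not $\omega_y$-orthogonal, the join is nondegenerate and equals $\PP(D_y)$. If that proposition is phrased only for an irreducible nondegenerate Legendrian variety, I would fall back on a direct argument. Assume instead that $T_a\widehat Z_i\cap T_b\widehat Z_j\ne0$ for general $(a,b)$, and let $v$ lie in this intersection. Since each tangent space is Lagrangian and contains the corresponding radial vector, $v$ is $\omega_y$-orthogonal to both $T_a\widehat Z_i$ and $T_b\widehat Z_j$, hence to $a$ and $b$. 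Moving $b$ along $\widehat Z_j$ and differentiating $\omega_y(a,b)$ should then force $\omega_y(a,\cdot)$ to vanish identically on $\widehat Z_j$, contradicting the choice of $(i,j)$.

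\textbf{Conclusion.} Intersecting the two nonempty open subsets of the irreducible variety $\widehat Z_i\times\widehat Z_j$ gives the required pair $(a,b)$. Applying Lemma~\ref{cf:positive-part}\textup{(\ref{cf:positive-part-tangent})} to $a$ and to $b$ produces the lines $f_1,f_2$ satisfying \eqref{cf:pair-conditions}. The main obstacle is the transversality step, namely matching the exact hypotheses of Buczy\'nski's secant and join theorem to a possibly reducible $Z_y$ and to the chosen pair of components.
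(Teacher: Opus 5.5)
\textbf{There is a gap in your transversality step.} Your reduction to the cone via Lemma~\ref{cf:positive-part}, the span argument giving some pair of components with $\omega_y\not\equiv0$, and the Terracini step are all fine. The problem is that transversality cannot be fixed by matching hypotheses. The statement you propose to extract from \cite[Proposition~3.5]{Buczynski}, and which your fallback would prove, is false.

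\emph{Two counterexamples.}
\begin{itemize}
\item[(a)] Let $\Lambda_1\ne\Lambda_2$ be Lagrangian subspaces of $D_y$ with $\Lambda_1\cap\Lambda_2\ne0$. Then $\omega_y$ does not vanish on $\Lambda_1\times\Lambda_2$, since $\Lambda_1^{\perp}=\Lambda_1\not\supset\Lambda_2$. Yet every pair of tangent spaces meets in $\Lambda_1\cap\Lambda_2\ne0$, so the join of $\PP(\Lambda_1)$ and $\PP(\Lambda_2)$ is the proper subspace $\PP(\Lambda_1+\Lambda_2)$.
\item[(b)] For $n=3$, take a symplectic splitting $D_y=\langle e,f\rangle\oplus V'$. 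Let $X$ be the cone with vertex $[e]$ over a twisted cubic, which is Legendrian in $\PP(V')\simeq\PP^3$. Then $X$ is an irreducible Legendrian variety and $\omega_y\not\equiv0$ on its cone. But every tangent space contains $e$, so its secant variety lies in $\PP(e^{\perp})$.
\end{itemize}
Both examples satisfy everything your fallback uses: Lagrangian tangent spaces containing the radial vectors, and $\omega_y\ne0$ on the pair. So differentiating $\omega_y(a,b)$ cannot force the vanishing you want.

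\emph{What is missing.} You need smoothness of $Z_y$, which makes distinct components disjoint, together with the actual content of Buczy\'nski's theorem. As used in the paper, that theorem gives only two things: the secant variety of $Z_y$ fills when $Z_y$ is irreducible, and the join of two distinct components fills otherwise. Your step~(2) may return $i=j$ with $Z_y$ reducible. Then you would need the secant variety of a single component to fill, which the cited result does not supply.

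\textbf{The fix is to reverse the order of choices, as the paper does.}
\begin{itemize}
\item[(1)] Take $Z_1=Z_2=Z_y$ if $Z_y$ is irreducible, and otherwise any two distinct (hence disjoint) components.
\item[(2)] Buczy\'nski's theorem makes the addition map $\widehat Z_1\times\widehat Z_2\to D_y$ dominant. Your Terracini or generic-smoothness step then gives $T_a\widehat Z_1\oplus T_b\widehat Z_2=D_y$ on a nonempty open set.
\item[(3)] The Levi condition follows from complementarity. If $\omega_y\equiv0$ on $\widehat Z_1\times\widehat Z_2$, differentiating in $b$ at a complementary pair gives $\omega_y(a,T_b\widehat Z_2)=0$. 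The Lagrangian property then gives $a\in(T_b\widehat Z_2)^{\perp}=T_b\widehat Z_2$. This contradicts complementarity, since $0\ne a\in T_a\widehat Z_1$.
\end{itemize}
With this order your span argument in step~(1) is no longer needed.
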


\begin{proof}
If $Z_y$ is irreducible, set $Z_1=Z_2=Z_y$; its secant variety
fills $\PP(D_y)$ by \cite[Proposition~3.5]{Buczynski}.
Otherwise, choose two distinct components $Z_1,Z_2$. They are
disjoint because $Z_y$ is smooth, and their join fills $\PP(D_y)$
by the same result.
Here the join is the Zariski closure of the union of lines joining
points on the two components. Writing $\widehat Z_i\subset D_y$
for the affine cone over $Z_i$, the addition map
\[
\sigma:\widehat Z_1\times\widehat Z_2\longrightarrow D_y,
\qquad (a,b)\longmapsto a+b
\]
is therefore dominant. By generic smoothness in characteristic zero,
its differential $d\sigma_{(a,b)}(u,w)=u+w$ is surjective at a general
pair of nonzero points. Thus $T_a\widehat Z_1+T_b\widehat Z_2=D_y$;
since both tangent spaces have dimension $n$, they are complementary.

Such a pair can also be chosen with nonzero Levi pairing.
Indeed, if $\omega_y$ vanished identically on
$\widehat Z_1\times\widehat Z_2$, differentiating in the second variable
at a pair with complementary tangent spaces would give
$\omega_y(a,T_b\widehat Z_2)=0$. Since $T_b\widehat Z_2$ is Lagrangian,
this implies $a\in T_b\widehat Z_2$. But $0\ne a\in T_a\widehat Z_1$
because $\widehat Z_1$ is a cone, contradicting complementarity.

Thus $\omega_y(a,b)\ne0$ defines a nonempty open subset of the
irreducible product $\widehat Z_1\times\widehat Z_2$, which meets
the open set of pairs with complementary tangent spaces.
Applying Lemma~\ref{cf:positive-part}\textup{(\ref{cf:positive-part-tangent})}
to a general pair in this intersection gives free contact lines satisfying
\eqref{cf:pair-conditions}.
\end{proof}

\subsection{Smoothing to a very free conic}\label{cf:sec-smoothing}

Fix $f_1,f_2$ as in Lemma~\ref{cf:line-pair}, with
$f_1(0)=f_2(0)=y$, and consider the projective flat family
\begin{equation}\label{cf:source-smoothing}
\mathscr C=\{U_1V_1=sU_0V_0\}
\subset\PP^1\times\PP^1\times\mathbb A^1_s
\longrightarrow\mathbb A^1_s.
\end{equation}
Its total space is a smooth quasi-projective surface, with central
fibre $\mathscr C_0=C_1\cup C_2$, where $C_1=\PP^1\times\{0\}$ and
$C_2=\{0\}\times\PP^1$ meet at $o=(0,0)$.
In coordinates $u=U_1/U_0$ and $v=V_1/V_0$, the equation is $uv=s$.
The marked sections are $p(s)=(\infty,0,s)$ and $q(s)=(0,\infty,s)$.
Let $f_0:\mathscr C_0\to Y$ restrict to $f_i$ on $C_i$, and put
$T_0=f_0^*TY$ and $T_i=T_0|_{C_i}=f_i^*TY$.

Let
\[
\sigma:\mathscr H:=
\underline{\operatorname{Hom}}_{\mathbb A^1}(\mathscr C,Y\times\mathbb A^1)
\longrightarrow\mathbb A^1_s
\]
be the relative Hom scheme. We work in a quasi-projective open
neighbourhood of $[f_0]$. Evaluation at $p,q$ defines
\[
\operatorname{Ev}:\mathscr H\longrightarrow Y\times Y,
\qquad [f:\mathscr C_s\to Y]\longmapsto(f(p(s)),f(q(s))).
\]

Freeness and the normalization sequence give $H^1(\mathscr C_0,T_0)=0$,
so $\sigma$ is smooth at $[f_0]$ by relative deformation theory
\cite[I.2.16--I.2.17]{Kollar}. Thus $\mathscr H$ is smooth there, with
tangent sequence
\[
0\longrightarrow H^0(\mathscr C_0,T_0)
\longrightarrow T_{[f_0]}\mathscr H
\xrightarrow{d\sigma}T_0\mathbb A^1\longrightarrow0.
\]
We take $d\sigma$ and $d\operatorname{Ev}$ at $[f_0]$.
The kernel parametrizes deformations with the source fixed, and
$d\operatorname{Ev}$ restricts to evaluation at $p(0),q(0)$.

\begin{lemma}\label{cf:evaluation-cokernel}
Let $X$ be a smooth projective variety and let $f:C=C_1\cup C_2\to X$
be a morphism, where $C_1,C_2\simeq\PP^1$ meet at a single node $o$.
Suppose that $f_i=f|_{C_i}$ is free for $i=1,2$, and choose
$p\in C_1\setminus\{o\}$ and $q\in C_2\setminus\{o\}$.
Put $y=f(o)$, $T'=f^*TX$, and $T_i'=T'|_{C_i}=f_i^*TX$.
Let $P_i\subset T_i'$ be the sum of its positive-degree summands. Then
\[
\begin{aligned}
\operatorname{coker}\bigl(H^0(C,T')\to T'_p\oplus T'_q\bigr)
&\simeq H^1(C,T'(-[p]-[q]))\\
&\simeq \frac{T_yX}{(P_1)_o+(P_2)_o}.
\end{aligned}
\]
\end{lemma}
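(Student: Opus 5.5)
The first isomorphism comes from the long exact cohomology sequence of
\[
0\longrightarrow T'(-[p]-[q])\longrightarrow T'\longrightarrow T'_p\oplus T'_q\longrightarrow0
\]
on $C$. Since $p$ and $q$ are smooth points of $C$, this is an exact sequence of sheaves. Its connecting map identifies the cokernel of evaluation with the kernel of $H^1(C,T'(-[p]-[q]))\to H^1(C,T')$. So it suffices to show $H^1(C,T')=0$.

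For this I will use the normalization sequence
\[
0\longrightarrow T'\longrightarrow T_1'\oplus T_2'\longrightarrow T'_o\longrightarrow0,
\]
where the last map is the difference of the values at $o$. Each $T_i'$ is a direct sum of line bundles of nonnegative degree by freeness, so $H^1(T_i')=0$. Evaluation $H^0(T_1')\to T'_o=T_yX$ is already surjective, because $T_1'$ is globally generated. Hence $H^1(C,T')=0$, and the first isomorphism follows.

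For the second isomorphism I apply the same normalization sequence to $T'(-[p]-[q])$. Its restrictions are $T_1'(-[p])$ on $C_1$ and $T_2'(-[q])$ on $C_2$, since each of $p,q$ lies on exactly one component. The twisted sequence is
\[
0\to T'(-[p]-[q])\to T_1'(-[p])\oplus T_2'(-[q])\to T_yX\to0 .
\]
Each $T_i'(-[p])$ splits into summands $\OO(d-1)$ with $d\ge0$, so its $H^1$ vanishes. Therefore
\[
H^1(C,T'(-[p]-[q]))\simeq \frac{T_yX}{\operatorname{Im}H^0(T_1'(-[p]))+\operatorname{Im}H^0(T_2'(-[q]))},
\]
where the images are the values at $o$. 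The step that needs care is identifying each image with $(P_i)_o$. Sections of a trivial summand $\OO(-1)$ vanish, so they contribute nothing at $o$. For a summand $\OO(d)$ with $d\ge1$, the sections of $\OO(d)(-[p])=\OO(d-1)$ take every value at $o\ne p$. Hence the image is exactly $(P_i)_o$, independently of the chosen splitting, since $P_i$ was shown to be canonical in Lemma~\ref{cf:positive-part}; the same argument works for any $X$.

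Under these identifications both isomorphisms are canonical, because the connecting maps are natural. The only delicate points are sign conventions in the difference map, which do not affect images, and the fact that $p,q\ne o$.
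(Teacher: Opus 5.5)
Your proof is correct and follows the paper's argument essentially step for step. The evaluation sequence together with $H^1(C,T')=0$ (from freeness and the normalization sequence) gives the first isomorphism. The normalization sequence for $T'(-[p]-[q])$, whose restrictions to the two components have vanishing $H^1$, identifies $H^1(C,T'(-[p]-[q]))$ with $T_yX$ modulo the node values of $H^0(C_1,T_1'(-[p]))$ and $H^0(C_2,T_2'(-[q]))$, which are $(P_1)_o$ and $(P_2)_o$. The paper also writes out the composite isomorphism explicitly: lift the values at $p,q$ to sections $s_i\in H^0(C_i,T_i')$ and take $s_1(o)-s_2(o)$ modulo $(P_1)_o+(P_2)_o$. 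Your connecting-map chase gives the same description, and it is this form that Lemma~\ref{cf:fixed-end-obstruction} uses.
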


\begin{proof}
Freeness and the normalization sequence give $H^1(C,T')=0$.
The evaluation sequence then gives the first isomorphism.
The normalization sequence identifies $H^1(C,T'(-[p]-[q]))$ with the
cokernel of
\[
H^0(C_1,T_1'(-[p]))\oplus H^0(C_2,T_2'(-[q]))
\longrightarrow T_yX.
\]
This map is the difference of the node values. The two summands
have images $(P_1)_o$ and $(P_2)_o$: trivial summands contribute no
sections after twisting, while positive summands allow arbitrary node
values. This proves the second isomorphism.
Explicitly, lift a pair of values at the marked points to sections
$s_i\in H^0(C_i,T_i')$, possible by freeness. Its cokernel class is
represented by $s_1(o)-s_2(o)$ modulo $(P_1)_o+(P_2)_o$; changing the
lifts does not change this class.
\end{proof}

For $f_0$, the condition \eqref{cf:pair-conditions} identifies this
cokernel with $T_yY/D_y\simeq L_y$ via $\theta_y$.
Since $d\sigma$ is surjective, $d\operatorname{Ev}$ induces
\begin{equation}\label{cf:missing-evaluation-map}
\overline{d\operatorname{Ev}}:T_0\mathbb A^1
\longrightarrow
\operatorname{coker}\bigl(d\operatorname{Ev}|_{\ker d\sigma}\bigr)
\simeq L_y.
\end{equation}
It sends $d\sigma(\xi)$ to the class of $d\operatorname{Ev}(\xi)$.
Both sides are one-dimensional, so $d\operatorname{Ev}$ is surjective
exactly when this map is nonzero. The next calculation identifies it
with the Levi pairing of the two branch tangents.

\begin{lemma}\label{cf:fixed-end-obstruction}
With the node-difference convention above,
\[
\overline{d\operatorname{Ev}}(\partial_s)=\omega_y(a,b).
\]
In particular, \eqref{cf:missing-evaluation-map} is an isomorphism.
\end{lemma}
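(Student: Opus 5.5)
To compute $\overline{d\operatorname{Ev}}(\partial_s)$, I will construct an explicit first-order lift $\xi\in T_{[f_0]}\mathscr H$ with $d\sigma(\xi)=\partial_s$. This means a morphism
\[
f_\epsilon:\mathscr C\times_{\mathbb A^1}\operatorname{Spec}\CC[\epsilon]/(\epsilon^2)\to Y
\]
restricting to $f_0$. I then read off its marked-point values and project them to the cokernel of Lemma~\ref{cf:evaluation-cokernel}, identified with $L_y$ via $\theta_y$. The first-order thickened source is $\{uv=\epsilon\}$ near $o$, and it is unchanged away from the node. So I will work locally. I choose target coordinates $z$ at $y$ and write $f_1(u)=y+ua+O(u^2)$ and $f_2(v)=y+vb+O(v^2)$. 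On the thickened node the ring is $\CC[u,v,\epsilon]/(uv-\epsilon,\epsilon^2)$. A first-order extension is obtained by gluing the two branch maps near the node. I take the candidate local formula $F(u,v)=f_1(u)+f_2(v)-y$, corrected by terms involving $\epsilon$. On $\epsilon=0$ this recovers $f_0$. The order-$\epsilon$ term is only determined up to sections, which is exactly the kernel ambiguity.

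The invariant way to do the computation is Čech-style. I cover $\mathscr C_\epsilon$ by a neighbourhood $W$ of the node and the complements $C_i^\circ$ of $o$ in the branches. On $C_i^\circ$ the thickening is trivial, since $s$ is a unit times a coordinate there, for instance $v=\epsilon/u$ on $C_1^\circ$. So I extend trivially by $f_i$ composed with the projection to the branch. The comparison with the local formula on overlaps produces a difference cocycle. On $C_1^\circ\cap W$ it is $f_2(\epsilon/u)-y\approx(\epsilon/u)\,b$, a section of $T_1'$ with a simple pole at $o$. Symmetrically, on $C_2^\circ\cap W$ it is $(\epsilon/v)\,a$. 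I will clear these poles with global sections $s_1\in H^0(T_1')$ and $s_2\in H^0(T_2')$ with prescribed principal parts at $o$. The values $s_1(p),s_2(q)$ then give $d\operatorname{Ev}(\xi)$, and by the explicit description in Lemma~\ref{cf:evaluation-cokernel} the cokernel class is the node difference of suitable extensions.

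The key step, and the main obstacle, is the pole cancellation, where the bracket must appear. A section of $f_1^*TY$ with principal part $b/u$ at $o$ exists only modulo $T_1'$-sections. The obstruction to its existence lies in $H^1(T_1'(-[p]))$-type groups, or dually it is detected by pairing with sections of the dual bundle. The contact form $f_1^*\theta$ on the $\OO^{\oplus(n+1)}$ part together with $\omega$ detects it. Concretely, I will take $\theta$ locally as a $1$-form $\alpha$ with $d\alpha|_D=\omega$. Then I will evaluate $\alpha$ on the glued first-order map. The component of the node discrepancy along $T_yY/D_y$ equals the $\epsilon$-coefficient of $\alpha(\partial F)$ integrated around the node, by residues. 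Expanding $\alpha(f_1(u)+f_2(v))$ to second order, the cross term $u\,v\,(\partial_a\alpha(b)-\partial_b\alpha(a))=\epsilon\,d\alpha(a,b)$ is the only contribution surviving in the cokernel. The individual terms $\alpha(a),\alpha(b)$ vanish by horizontality, and the pure second-order terms lie in $(P_1)_o+(P_2)_o$ by Lemma~\ref{cf:positive-part}.

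This gives $\overline{d\operatorname{Ev}}(\partial_s)=\pm\,d\alpha(a,b)=\pm\,\omega_y(a,b)$. The sign is to be fixed by the stated node-difference convention and the Cartan formula $d\alpha(a,b)=-\alpha([a,b])$ for horizontal extensions, matching the bracket definition of the Levi form. Nonvanishing then follows from \eqref{cf:pair-conditions}. Since both sides of \eqref{cf:missing-evaluation-map} are one-dimensional, the map is an isomorphism. I expect the careful bookkeeping of the residue/Čech identification with the explicit cokernel description in Lemma~\ref{cf:evaluation-cokernel} to be the delicate part, especially independence of choices and the sign.
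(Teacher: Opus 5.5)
Your setup is the paper's. You lift $\partial_s$ to a deformation over $uv=\epsilon$, read off the marked values, lift them to global sections $s_i\in H^0(C_i,T_i)$, and evaluate $\theta_y(s_1(o)-s_2(o))$ via Lemma~\ref{cf:evaluation-cokernel}. Your cross-term expansion also correctly computes the residue at $o$ of the $\epsilon$-part of the relative contact differential of the glued map.

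\textbf{The gap.} It is the sentence identifying the class along $T_yY/D_y$ with that residue ``by residues''. That identification is the entire content of the lemma, and no expansion at the node can supply it. The values $s_i(o)$ come from interpolating the marked-point values globally on each branch. So some global constraint on sections of $f_i^*TY$ is required, and your proposal names none.

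Your account of where the contact structure enters is also off:
\begin{enumerate}
\item There is no existence obstruction, since $T_1(-[p])\simeq\OO(1)\oplus\OO^{\oplus(n-1)}\oplus\OO(-1)^{\oplus(n+1)}$ has $H^1=0$.
\item Global sections $s_i$ cannot have principal parts; the poles belong to the rational $\epsilon$-parts of the deformation.
\item The pure $u$- and $v$-terms vanish because each branch is horizontal, not by Lemma~\ref{cf:positive-part}.
\item The statement fixes the sign, so $\pm$ is not enough.
\end{enumerate}

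\textbf{The missing idea: linearized horizontality.} Every $W\in H^0(C_i,T_i)$ is a first-order deformation of a contact line. Its relative contact differential lies in $H^0(\PP^1,\Omega^1_{\PP^1}\otimes f_i^*L)=H^0(\OO(-1))=0$. Hence $(\partial_W\alpha)(f_i')+\alpha(W')=0$ identically.

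Write the branch $\epsilon$-parts as $A=b/u+c+O(u)$ and $B=a/v+c+O(v)$. The constant $c$ is common to both, which is exactly what gluing through the node neighbourhood imposes; record this in your \v{C}ech construction. Then $W_1=u(A-s_1)$ and $W_2=v(B-s_2)$ are global sections. The factor $u$ removes the node pole, and the zero of $A-s_1$ at $p$ cancels the pole of $u$ there.

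Their node jets are $(b,\,c-s_1(o))$ and $(a,\,c-s_2(o))$. Horizontality at $o$ therefore gives $(\partial_b\alpha)_y(a)+\alpha_y(c-s_1(o))=0$ and $(\partial_a\alpha)_y(b)+\alpha_y(c-s_2(o))=0$. Subtracting eliminates $c$ and yields $\theta_y(s_1(o)-s_2(o))=-d\alpha_y(a,b)\,e_y=\omega_y(a,b)$, with the sign determined.

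In your residue language, horizontality of $s_1$ and of $u(A-s_1)$ says exactly this: the residue at $o$ of the $\epsilon$-part of the contact differential on $C_1$ equals $-\alpha(A-s_1)$ evaluated at $o$. Your cross-term computation supplies the residue. Only this identity, its analogue on $C_2$, and the common constant $c$ connect that residue to $s_1(o)-s_2(o)$.
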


\begin{proof}
Choose $\zeta\in T_{[f_0]}\mathscr H$ with $d\sigma(\zeta)=\partial_s$.
Over $B_\varepsilon=\operatorname{Spec}\CC[\varepsilon]/(\varepsilon^2)$,
it represents a deformation $F_\varepsilon:\mathscr C_\varepsilon\to Y$
of $f_0$, where $\mathscr C_\varepsilon$ is obtained by setting
$s=\varepsilon$, so its node equation is $uv=\varepsilon$.
The two projections trivialize this source away from the node on the
respective branches, including their marked points at infinity; in the
affine charts they are given by $v=\varepsilon/u$ and $u=\varepsilon/v$.
Using these trivializations, define the first-order variations $A,B$
by the expansions in local target coordinates
\[
\begin{aligned}
F_\varepsilon(u,\varepsilon/u)&=f_1(u)+\varepsilon A(u),\\
F_\varepsilon(\varepsilon/v,v)&=f_2(v)+\varepsilon B(v).
\end{aligned}
\]
Intrinsically, $A,B$ are rational sections of $T_1,T_2$,
respectively, regular away from the node.
Their values at infinity give the first-order variations of the two
marked images, so $d\operatorname{Ev}(\zeta)=(A(\infty),B(\infty))$.
By freeness, choose $s_i\in H^0(C_i,T_i)$ with
\[
s_1(\infty)=A(\infty),\qquad s_2(\infty)=B(\infty).
\]
The description in Lemma~\ref{cf:evaluation-cokernel} gives
\[
\overline{d\operatorname{Ev}}(\partial_s)
=\theta_y\bigl(s_1(0)-s_2(0)\bigr).
\]
We compute this difference from the two branch expansions.

Choose common target coordinates centred at $y$. In the completed
local rings, write the coordinate expansions of the branch maps as
$g_1(u),g_2(v)$, with zero constant terms. The deformation has a
formal expression
\[
g_1(u)+g_2(v)+\varepsilon R(u,v)
\quad\text{in }\CC[[u,v,\varepsilon]]/(uv-\varepsilon,\varepsilon^2).
\]
Substitution on the punctured branches, using $\varepsilon^2=0$, gives
\begin{equation}\label{cf:common-constant}
\begin{aligned}
A(u)&=\frac b u+R(u,0)=\frac b u+c+O(u),\\
B(v)&=\frac a v+R(0,v)=\frac a v+c+O(v),
\end{aligned}
\end{equation}
where $c=R(0,0)$ is the same in both expansions.

Set $W_1=u(A-s_1)$ and $W_2=v(B-s_2)$. These sections extend across
the node by \eqref{cf:common-constant}; the zeros of $A-s_1$ and
$B-s_2$ at infinity cancel the simple poles of $u,v$. Thus
$W_i\in H^0(C_i,T_i)$, with node jets
\[
\begin{aligned}
W_1(0)&=b,& W_1'(0)&=c-s_1(0),\\
W_2(0)&=a,& W_2'(0)&=c-s_2(0).
\end{aligned}
\]
Each $W_i$ defines a first-order deformation
$G_i:C_i\times B_\varepsilon\to Y$ of $f_i$.
Since $H^1(C_i,\OO_{C_i})=0$, the line bundle $f_i^*L\simeq\OO(1)$
has only the trivial first-order deformation, so
$G_i^*L\simeq\OO_{C_i\times B_\varepsilon}(1)$.
The relative contact differential of $G_i$ is therefore zero, since
$H^0(C_i\times B_\varepsilon,\OO(-1))=0$.
Write $\theta=\alpha\otimes e$ in a local frame near $y$.
The coefficients of $\varepsilon$ in these horizontality identities
at the node are
\[
\begin{aligned}
(\partial_b\alpha)_y(a)+\alpha_y\bigl(c-s_1(0)\bigr)&=0,\\
(\partial_a\alpha)_y(b)+\alpha_y\bigl(c-s_2(0)\bigr)&=0.
\end{aligned}
\]
Here $\partial_w\alpha$ differentiates the coefficients of $\alpha$
in the direction $w$. Subtracting gives
\[
\theta_y\bigl(s_1(0)-s_2(0)\bigr)
=-d\alpha_y(a,b)e_y=\omega_y(a,b).
\]
This proves the formula, and \eqref{cf:pair-conditions} makes it nonzero.
\end{proof}

\begin{remark*}
For two free maps of $L$-degree one meeting at $y$ in a smooth
projective variety carrying an $L$-valued one-form $\theta$, the same
calculation shows that a first-order smoothing fixing the values at both
marked points forces $d\alpha_y(a,b)=0$. Here $\theta=\alpha\otimes e$
locally and $a,b$ are the branch tangent vectors. This obstruction
does not require a contact structure.
\end{remark*}

\begin{proposition}[Smoothing to a balanced map]\label{cf:first-conic}
There exists a morphism $g:\PP^1\to Y$ of contact degree two
with the following properties.
\begin{enumerate}
\renewcommand{\labelenumi}{\textup{(\arabic{enumi})}}
\item\label{cf:first-conic-bundle}
The map $g$ is balanced.
\item\label{cf:first-conic-evaluation}
The evaluation morphism at the fixed points
$0,\infty\in\PP^1$,
\[
\epsilon: \underline{\operatorname{Hom}}(\PP^1,Y)\longrightarrow Y\times Y,\qquad
[h]\longmapsto\bigl(h(0),h(\infty)\bigr),
\]
is dominant on the unique irreducible component containing $[g]$.
\end{enumerate}
\end{proposition}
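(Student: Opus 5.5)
The plan is to smooth the nodal map $f_0:\mathscr C_0\to Y$ along the family \eqref{cf:source-smoothing} and to take $g$ to be a general nearby smooth-source member. The input is the surjectivity of $d\operatorname{Ev}$ at $[f_0]$.

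\emph{Surjectivity at $[f_0]$.} By Lemma~\ref{cf:evaluation-cokernel} and \eqref{cf:pair-conditions}, the restriction $d\operatorname{Ev}|_{\ker d\sigma}$ has image of codimension one, with cokernel $L_y$. By Lemma~\ref{cf:fixed-end-obstruction}, the induced map \eqref{cf:missing-evaluation-map} sends $\partial_s$ to $\omega_y(a,b)\neq0$. Hence $d\operatorname{Ev}:T_{[f_0]}\mathscr H\to T_{(f_1(\infty),f_2(\infty))}(Y\times Y)$ is surjective. Since $\mathscr H$ is smooth at $[f_0]$, the morphism $\operatorname{Ev}$ is smooth at $[f_0]$, hence smooth on an open neighbourhood $\mathcal V$. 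The map $\sigma$ is also smooth at $[f_0]$, so $\sigma(\mathcal V)$ contains a point $s\neq0$. Fix such a point $[g_s]\in\mathcal V$ with $s\ne0$. Its source $\mathscr C_s=\{uv=s\}\simeq\PP^1$ via $u$, and the marked points $p(s),q(s)$ become $u=\infty$ and $u=0$; composing with an inversion, they become $0,\infty$. Let $g:\PP^1\to Y$ be the resulting map.

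\emph{Balancedness.} The contact degree of $g$ is $\deg f_1^*L+\deg f_2^*L=2$ by Lemma~\ref{cf:deformation-facts}\textup{(\ref{cf:degree-constancy})}. For very freeness I will use upper semicontinuity. The bundle $T_0(-[p]-[q])$ has $H^1(\mathscr C_0,T_0(-[p]-[q]))=0$, since by Lemma~\ref{cf:evaluation-cokernel} and \eqref{cf:pair-conditions} this group equals $T_yY/D_y\simeq L_y$. That is nonzero, so this route fails and must be adjusted. Instead, I will argue directly at $g$: smoothness of $\operatorname{Ev}$ at $[g_s]$ together with smoothness of $\sigma$ means that the fibre of $\sigma$ over $s$, namely $\underline{\operatorname{Hom}}(\PP^1,Y)$, has evaluation $\epsilon$ at $0,\infty$ with surjective differential at $[g]$. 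Here I use that $\operatorname{Ev}$ restricted to the $\sigma$-fibre is smooth at points where $(\sigma,\operatorname{Ev})$ is smooth. To make this hold, I will shrink $\mathcal V$ so that $(\sigma,\operatorname{Ev})$ is smooth; this requires knowing surjectivity of $d(\sigma,\operatorname{Ev})$ at $[f_0]$. That is exactly the combination of the exact tangent sequence with the statement that $d\operatorname{Ev}|_{\ker d\sigma}$ misses only $L_y$, and this is the delicate point. If it fails at $s=0$, I will instead rely on generic smoothness of the dominant map $\operatorname{Ev}|_{\sigma^{-1}(s)}$ for general $s\neq0$, which follows from dominance of $\operatorname{Ev}$ on the irreducible component through $[f_0]$ and the flatness of $\sigma$ there. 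Surjectivity of $H^0(g^*TY)\to T_{g(0)}Y\oplus T_{g(\infty)}Y$ then says every splitting degree $d_i$ of $g^*TY$ satisfies $h^0(\OO(d_i))-h^0(\OO(d_i-2))=2$, i.e.\ $d_i\ge1$. So $g$ is very free of contact degree two, hence balanced by the remark after Definition~\ref{cf:balanced-definition}.

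\emph{Dominance.} Since $g$ is balanced, $H^1(g^*TY)=0$, so $\underline{\operatorname{Hom}}(\PP^1,Y)$ is smooth at $[g]$ and lies on a unique irreducible component. The differential of $\epsilon$ at $[g]$ is evaluation of sections at $0,\infty$, which is surjective because $H^1(g^*TY(-[0]-[\infty]))=\bigoplus H^1(\OO(d_i-2))=0$ for $d_i\ge1$ (Lemma~\ref{cf:deformation-facts}\textup{(\ref{cf:evaluation-smoothness})}). Thus $\epsilon$ is smooth at $[g]$, hence open there, and dominant on that component.

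The main obstacle is the middle step: transferring surjectivity of $d\operatorname{Ev}$ on the total relative Hom space at the nodal point to the smooth-source fibre. I expect to resolve it by the generic-smoothness argument over general $s\ne0$.
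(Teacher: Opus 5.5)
\textbf{There is a genuine gap.} Your outline matches the paper's: surjectivity of $d\operatorname{Ev}$ at $[f_0]$, passage to a smooth-source member, very freeness from two-point surjectivity, then smoothness and dominance of $\epsilon$. The gap is at the step you yourself call the main obstacle: transferring surjectivity from the total space $\mathscr H$ to the fibre over some $s\neq0$. Neither of your two arguments for it works as stated.

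\emph{Your first route.} It needs $(\sigma,\operatorname{Ev})$ to be smooth at $[f_0]$, and this is false. Since $d\sigma$ is onto, surjectivity of $d(\sigma,\operatorname{Ev})$ at $[f_0]$ is equivalent to surjectivity of $d\operatorname{Ev}|_{\ker d\sigma}$. By Lemma~\ref{cf:evaluation-cokernel} and \eqref{cf:pair-conditions}, that map has cokernel $L_y\neq0$; this nonvanishing is the whole reason the smoothing direction is needed.

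\emph{Your fallback.} It is unjustified. Dominance, or even smoothness, of $\operatorname{Ev}$ on the total space, together with flatness of $\sigma$, does not imply that $\operatorname{Ev}$ restricted to a general fibre of $\sigma$ is dominant. For example, take $(x,t,s)\mapsto(x,s)$ on $\mathbb A^3$ with $\sigma=s$: the map is smooth, $\sigma$ is flat, and the fibres have the target's dimension, yet each fibre maps onto a line. In your setting, a priori the image of each fibre could be a hypersurface moving with $s$. That is exactly what happens to first order at $s=0$. Your intermediate claim $H^1(\mathscr C_0,T_0(-[p]-[q]))=0$ was also wrong, as you noticed.

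\emph{The missing ingredient.} You already wrote it down without using it: for every $s\neq0$ the marked points sit at $u=\infty$ and $u=0$. The paper turns this into a product structure. The trivialization $([U_0:U_1],s)\mapsto([U_0:U_1],[U_1:sU_0],s)$ of $\mathscr C|_{s\neq0}$ identifies $\mathscr H|_{s\neq0}$ with an open subscheme of $\underline{\operatorname{Hom}}_2(\PP^1,Y)\times(\mathbb A^1_s\setminus\{0\})$. On it, $\operatorname{Ev}([g],s)=(g(\infty),g(0))$ does not depend on $s$.

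Hence, at any point $([g],s)$ of your neighbourhood $\mathcal V$ with $s\neq0$:
\begin{itemize}
\item $d\operatorname{Ev}$ kills the $\partial_s$ factor;
\item on $T_{[g]}\underline{\operatorname{Hom}}(\PP^1,Y)=H^0(g^*TY)$ it restricts to evaluation at $\infty$ and $0$;
\item since $\operatorname{Ev}$ is smooth on $\mathcal V$, this evaluation is surjective.
\end{itemize}
This argument is pointwise and needs no generic smoothness. It also shows that $(\sigma,\operatorname{Ev})$ is smooth at every point of $\mathcal V$ over $s\neq0$, though not at $[f_0]$. So either of your routes can be completed once this observation is added.

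The remaining steps agree with the paper: very freeness via $h^0(\OO(d_i))-h^0(\OO(d_i-2))=2$, balancedness from contact degree two, and smoothness and dominance of $\epsilon$ at $[g]$.
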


\begin{proof}\leavevmode\par
\noindent\textup{(1)}
Lemma~\ref{cf:fixed-end-obstruction} makes
\eqref{cf:missing-evaluation-map} an isomorphism, so
$d\operatorname{Ev}$ is surjective at $[f_0]$. Since $\mathscr H$ and
$Y\times Y$ are smooth there, $\operatorname{Ev}$ is smooth at $[f_0]$.

Choose a smooth connected quasi-projective open neighbourhood $\mathscr U$ of $[f_0]$ in
$\mathscr H$ on which both $\sigma$ and $\operatorname{Ev}$ are smooth.
Since $\sigma$ is open, $\mathscr U$ meets
$\sigma^{-1}(\mathbb A^1_s\setminus\{0\})$, where the source is a smooth
$\PP^1$. The universal source family is projective and flat over
$\mathscr U$, so by
Lemma~\ref{cf:deformation-facts}\textup{(\ref{cf:degree-constancy})}
all its maps have contact degree
$\deg f_1^*L+\deg f_2^*L=2$.
Over $\mathbb A^1_s\setminus\{0\}$, the source family $\mathscr C$
admits the trivialization
\[
\begin{aligned}
\PP^1\times(\mathbb A^1_s\setminus\{0\})
&\xrightarrow{\sim}\mathscr C|_{s\ne0},\\
([U_0:U_1],s)&\longmapsto
([U_0:U_1],[U_1:sU_0],s).
\end{aligned}
\]
Composing maps $f:\mathscr C_s\to Y$ with this trivialization identifies
the part of $\mathscr U$ over $s\ne0$ with an open subscheme of
$\underline{\operatorname{Hom}}_2(\PP^1,Y)\times(\mathbb A^1_s\setminus\{0\})$.
Choose a point $([g],s)$ there. Since $p,q$ correspond to $\infty,0$,
$\operatorname{Ev}([g],s)=(g(\infty),g(0))$ is independent of $s$.
Thus surjectivity of $d\operatorname{Ev}$ gives a surjection
\[
H^0(\PP^1,g^*TY)\longrightarrow
T_{g(0)}Y\oplus T_{g(\infty)}Y.
\]
A summand of nonpositive degree has a two-point evaluation map of
rank at most one, so $g$ is very free. Since its contact degree is two,
it is balanced by the observation following
Definition~\ref{cf:balanced-definition}.

\noindent\textup{(2)} Since $H^1(g^*TY)=0$,
Lemma~\ref{cf:deformation-facts}\textup{(\ref{cf:hom-smoothness})}
shows that $\underline{\operatorname{Hom}}(\PP^1,Y)$ is smooth at $[g]$.
Thus $[g]$ lies on a unique irreducible component. By the preceding
surjectivity, evaluation at $0,\infty$ is smooth at $[g]$; its image
on that component therefore contains a nonempty open subset of
$Y\times Y$, proving dominance.

\end{proof}

\begin{remark}\label{cf:other-characteristics}
Proposition~\ref{cf:first-conic} also holds for a smooth connected
projective contact variety over an algebraically closed field of arbitrary
characteristic, provided it admits two free contact lines through a
common point satisfying \eqref{cf:pair-conditions}. The proof uses
only deformation-theoretic $H^1$-vanishing and the first-order
calculation of Lemma~\ref{cf:fixed-end-obstruction}, both valid in
arbitrary characteristic. This observation concerns only
the smoothing result; the subsequent propagation and homogeneity
arguments are carried out in characteristic zero.
\end{remark}

\subsection{Degree-two curves through every point}\label{cf:sec-propagation}

The two-line boundary bound below verifies a general criterion for
propagating very free curves with tangent lines spanning each tangent
space. In the contact setting, \eqref{cf:degree-relation} and
Lemma~\ref{cf:degree-two-transversality} then give balanced transverse maps.

\begin{lemma}\label{cf:two-line-boundary}
For every $y\in Y$, the set $R_y$ of points reachable from $y$ by a
chain of at most two contact lines is closed and has dimension at most
$2n$. We include $y$ itself in $R_y$.
\end{lemma}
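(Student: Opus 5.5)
The plan is to treat separately the closedness of $R_y$ and the bound on its dimension. Both rest on the fact that contact lines form a proper family.

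\textbf{Step 1: the family of all contact lines.} Let $\mathcal L\subset\operatorname{Chow}(Y)$ be the locus of effective $1$-cycles of $L$-degree one whose support is a rational curve. It is closed and hence projective, for the following reasons.
\begin{itemize}
\item Since $\operatorname{Pic}(Y)=\ZZ[L]$ and $L$ is ample, every effective cycle of $L$-degree one is reduced and irreducible.
\item A limit of rational curves is a connected union of rational curves. In $L$-degree one such a limit is therefore again a single rational curve.
\end{itemize}
Let $\mathcal U\to\mathcal L$ be the universal family and $e:\mathcal U\to Y$ its evaluation map; both are proper. Put $R^1_y=e(\mathcal U_{\mathcal L_y})$, the union of the contact lines through $y$, where $\mathcal L_y=\{[C]\in\mathcal L: y\in C\}$ is closed. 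Then $R^1_y$ is closed.

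\textbf{Step 2: closedness of $R_y$.} Form the chain space
\[
\mathcal U_{\mathcal L_y}\times_Y\mathcal U,
\]
taken with respect to $e$ on both factors. It parametrizes a line through $y$, a point $z$ on it, and a second line through $z$. This space is proper. The evaluation map at the second line has image exactly $R_y$, and $R_y$ contains $y$ itself. Being the image of a proper scheme, $R_y$ is closed.

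\textbf{Step 3: the dimension bound.} The key claim is that, for every point $z\in Y$ and not only a general one, $\dim R^1_z\le n$. Granting this, $R_y$ is the union of the sets $R^1_z$ over $z\in R^1_y$. The fibration dimension count then gives
\[
\dim R_y\le\dim R^1_y+\max_z\dim R^1_z\le n+n=2n.
\]
Upper semicontinuity of fibre dimension justifies this count on the proper chain space of Step 2.

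To prove the claim, I would argue that each irreducible component $V$ of $R^1_z$ is an integral (isotropic) subvariety for $D$, meaning $T_wV\subset D_w$ at general $w\in V$. This forces $\dim V\le n$, since integral subvarieties of a contact manifold have dimension at most $n$.

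I would first try the argument of Kebekus on lines through an arbitrary point.
\begin{itemize}
\item Take a general point $w$ of $V$, a line $C$ through $z$ and $w$, and a one-parameter family of lines through $z$ sweeping out $V$. Then $T_wV$ is spanned by $T_wC$ and the values at $w$ of sections of the normal sheaf of $C$ that vanish at $z$.
\item Each line is horizontal, so the pulled-back contact form $\theta$ restricted to the surface swept by such a family is an $L$-valued form of degree one along each line. Integrating it along a curve from $z$ shows that it vanishes identically.
\item In bundle terms, the $\theta$-component of such an infinitesimal deformation is a section of $f^*L\simeq\OO(1)$. I expect that the horizontality of the nearby lines, together with a second vanishing condition at $z$, forces this section to have two zeros, which would kill it.
\end{itemize}
Of these, only the first is routine; the second and third are the intended argument but remain unverified.

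\textbf{Main obstacle.} The uniform bound $\dim R^1_z\le n$ at special points $z$ is where I expect the difficulty. There the lines need not be free, and $\mathcal L_z$ may be singular or larger than expected, so the argument cannot simply use Theorem~\ref{cf:vmrt-facts}. If the infinitesimal isotropy argument at $z$ proves delicate, I would fall back on citing \cite{Kebekus}, which shows that every contact line is horizontal and that the locus of lines through any point has dimension at most $n$.
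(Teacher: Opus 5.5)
Your Steps~1--3 follow the paper's proof. Contact lines cannot split, so their Chow locus is projective; the paper cites \cite[Proposition~3.7(3)]{BKK} for this, where you argue directly. Closedness of $R_y$ then comes from properness of the incidence evaluation. The bound $\dim R_y\le 2n$ comes from fibring over $\Lambda_y=R^1_y$ with fibres $\Lambda_z$, using $\dim\Lambda_z\le n$ at \emph{every} $z$. One small slip in Step~2: $\mathcal U_{\mathcal L_y}\times_Y\mathcal U$ only records the junction point $z$. To sweep out the second line you need one more factor $\times_{\mathcal L}\mathcal U$, or, as in the paper, the closed relation $R\subset Y\times Y$ of pairs lying on a common contact line.

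The point that needs attention is the key claim. The paper takes it from \cite[Proposition~5.8]{BKK}, precisely because it is needed at special points. The results of \cite{Kebekus} on lines through a point that the paper uses (smoothness, freeness, span of the VMRT) are stated at a general point, so you should check your fallback citation before relying on it. In fact you need neither citation: your third bullet is a complete proof once written out.

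Let $T$ be the smooth locus of a reduced irreducible component of the pointed Hom scheme of $L$-degree-one maps with $0\mapsto z$, and let $F:\PP^1\times T\to Y$ be the universal map, so $F(0,t)=z$. For a local vector field $\xi$ on $T$, the section $\phi_\xi=(F^*\theta)(dF\,\xi)$ of $F^*L$ restricts to a section of $\OO(1)$ on each $\PP^1\times\{t\}$. It vanishes at $u=0$ because $dF\,\xi=0$ there. Write $\theta=\alpha\otimes e$ near $z$. Horizontality $\alpha(dF\,\partial_u)\equiv0$ and $[\partial_u,\xi]=0$ give $\partial_u\bigl(\alpha(dF\,\xi)\bigr)=d\alpha(dF\,\partial_u,dF\,\xi)$, which also vanishes at $u=0$. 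A section of $\OO(1)$ with a double zero is zero, so $dF$ takes values in $D$ everywhere; this needs no freeness or generality.

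By generic smoothness, the closure $V$ of the image satisfies $T_wV\subset D_w$ at general $w$. Then the restriction of $\alpha$ to $V$ vanishes, so $d\alpha|_{T_wV}=0$. Hence $T_wV$ is isotropic for the nondegenerate Levi form on the $2n$-dimensional space $D_w$, and $\dim V\le n$. The pointed Hom scheme has finitely many components, each with dense smooth locus, and every contact line through $z$ is the image of such a map. This bounds $\dim\Lambda_z\le n$ and makes your argument self-contained apart from projectivity of the Chow locus. Your second bullet (integrating along a curve from $z$) is unnecessary.
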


\begin{proof}
Since $L$ is ample and contact lines have $L$-degree one, they cannot
split. Their Chow locus, with its reduced structure, is a projective scheme
\cite[Proposition~3.7(3)]{BKK}, with finitely many irreducible components.
The fibre square of the incidence support of its universal cycle parametrizes triples
$(C,z,w)$ with $z,w\in C$. Evaluation $(C,z,w)\mapsto(z,w)$ is proper,
so its image
\[
R=\{(z,w)\in Y\times Y:\text{$z,w$ lie on a contact line}\}
\]
is closed.

For each $y\in Y$, let $\Lambda_y=\{z\in Y:(y,z)\in R\}$ be the
union of contact lines through $y$. It is closed, and
\cite[Proposition~5.8]{BKK}, applied to the finitely many components,
gives $\dim\Lambda_y\le n$ for every $y$, including special points.

The projection of $R\cap(\Lambda_y\times Y)$ to $\Lambda_y$ has
fibre $\Lambda_z$ over $z$, so its total dimension is at most $2n$.
Its image under the second projection is closed by properness and
consists of points reachable from $y$ by two contact lines.
Adding $\Lambda_y\cup\{y\}$ gives $R_y$ and proves the assertion.
\end{proof}

\begin{remark*}
The same argument applies to any family of curves on a projective
variety $X$ with proper two-point incidence evaluation: if the union
$\Lambda_x$ of its members through $x$ satisfies $\dim\Lambda_x\le a$
for every $x\in X$, with $a\ge0$, then the locus reachable from $x$
by at most two members is closed of dimension at most $2a$, and is
a proper subset of $X$ whenever $2a<\dim X$.
\end{remark*}

\begin{proposition}[Very free curves through every point]\label{cf:very-free-propagation}
Let $X$ be a smooth projective variety and let
$0\ne\beta\in H_2(X,\ZZ)$ be a curve class. Let
$\overline M\subset\overline M_{0,2}(X,\beta)$ be a closed irreducible
subvariety of the coarse moduli space of stable maps, and write
\[
e=(e_1,e_2):\overline M\longrightarrow X\times X
\]
for two-point evaluation. Let $\partial\overline M\subset\overline M$
be the locus of stable maps whose domain curves are singular, with
its reduced induced structure. It is closed, since the smooth-domain locus is open in
$\overline M_{0,2}(X,\beta)$. Suppose that $e$ is dominant
and that
\[
B_x:=e_2\bigl(\partial\overline M\cap e_1^{-1}(x)\bigr)\ne X
\qquad\text{for every }x\in X.
\]
Then every $x\in X$ lies on a very free map $f:\PP^1\to X$ of class
$\beta$, with $f(0)=x$. For a fixed local coordinate $u$ at $0$, the
vectors $df_0(\partial_u)$ arising from such maps form a nonempty open
subset of $T_xX$. In particular, $\dim X$ such maps can be chosen
whose tangent lines at $x$ span $T_xX$.
\end{proposition}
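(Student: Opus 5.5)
Fix $x\in X$. The plan is to produce, for general $x'\in X\setminus B_x$, a smooth-domain stable map through $x$ and $x'$, and to show that such a map can be chosen very free. Since $e$ is dominant and $\overline M$ is proper, $e$ is surjective. Hence $e_1^{-1}(x)$ is nonempty and closed, and
\[
e_2\bigl(e_1^{-1}(x)\bigr)=X .
\]
To see this, let $V\subset X$ be the open set of points $x'$ over which the general fibre behaviour holds, and use closedness of the image under the proper map $e_2$ restricted to $e_1^{-1}(x)$. I expect this to be one of the delicate points. Dominance of $e$ alone only says that $e(\overline M)=X\times X$, and that does give $(x,x')\in e(\overline M)$ for every $x'$. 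So $e_2(e_1^{-1}(x))=X$ for every $x$, and this uses properness.

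Since $B_x$ is a proper closed subset of $X$ (closed because $\partial\overline M$ is closed and the maps are proper), a general $x'\notin B_x$ comes from a point $[f,p_1,p_2]\in e^{-1}(x,x')$ whose domain is smooth, $\simeq\PP^1$. Its class is $\beta\ne0$, so we may take $p_1=0$ and $p_2=\infty$. This gives maps $f:\PP^1\to X$ with $f(0)=x$ and $f(\infty)=x'$, where $x'$ ranges over a dense subset of $X$. Let $W$ be an irreducible component of the pointed Hom scheme $\underline{\operatorname{Hom}}(\PP^1,X;0\mapsto x)$, of class $\beta$, for which evaluation at $\infty$ is dominant onto $X$. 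Such a component exists: there are finitely many components of finite type, and the union of the images is dense.

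By generic smoothness in characteristic zero, at a general point $[f]$ of $W_{\mathrm{red}}$ the differential of $\mathrm{ev}_\infty$ is surjective. The tangent space to the pointed Hom scheme is $H^0(f^*TX(-[0]))$ by Lemma~\ref{cf:deformation-facts}\textup{(\ref{cf:evaluation-smoothness})}, and the tangent space of the reduced component is a subspace of it. Therefore $H^0(f^*TX(-[0]))\to T_{f(\infty)}X$ is surjective. If $f^*TX$ had a summand $\OO(d)$ with $d\le0$, then the sections of $\OO(d-1)$ would all vanish, so evaluation at $\infty$ would miss that summand's fibre. Hence every summand has positive degree and $f$ is very free, with $f(0)=x$. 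This is the standard Koll\'ar argument \cite[IV.3]{Kollar}.

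For the tangent statement, very freeness gives $H^1(f^*TX(-2[0]))=0$. Lemma~\ref{cf:deformation-facts}\textup{(\ref{cf:jet-smoothness})} then says that the restricted jet map from the pointed Hom scheme to $T_xX$ is smooth, hence open, at $[f]$. Very freeness is an open condition, so the set of vectors $df_0(\partial_u)$ arising from very free maps with $f(0)=x$ is a union of open sets, and it is nonempty. A nonempty open subset of the vector space $T_xX$ contains a basis, which gives $\dim X$ maps whose tangent lines span $T_xX$.

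The main obstacle is handling the non-reduced structure of the Hom scheme at the generic-smoothness step. I would resolve this by applying generic smoothness to $W_{\mathrm{red}}\to X$ and noting that $T W_{\mathrm{red}}\subseteq H^0(f^*TX(-[0]))$ is compatible with the evaluation differential. A secondary point is passing from the coarse space $\overline M_{0,2}$ to actual maps, which is harmless over smooth domains.
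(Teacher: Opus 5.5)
Your proposal is correct and follows essentially the same route as the paper: surjectivity of $e$ together with $B_x\ne X$ produces smooth-domain maps from $x$ whose second marked point sweeps out $X\setminus B_x$. Generic smoothness on a reduced component of the pointed Hom scheme, combined with $T_{[h]}H\subseteq H^0(h^*TX(-[0]))$, then forces very freeness, and smoothness of the first-jet map on the open very free locus gives the open set of tangent vectors. The differences are only cosmetic: you place the markings at $0,\infty$ rather than $0,1$, and your first paragraph is muddled, though its conclusion $e_2(e_1^{-1}(x))=X$ is correct.
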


\begin{proof}
The variety $\overline M$ is projective \cite{BM,FP}, so $e$ is proper
and hence surjective. Fix $x\in X$. The set $B_x$ is closed, and for
every $y\in X\setminus B_x$ a stable map over $(x,y)$ has a smooth domain
curve. Since its genus is zero, the domain is $\PP^1$, and we may place
its markings at $0$ and $1$. Consequently evaluation at $1$ on
\[
\underline{\operatorname{Hom}}_\beta(\PP^1,X;0\mapsto x)
\]
covers $X\setminus B_x$. This is a closed subscheme of the
quasi-projective finite-type scheme
$\underline{\operatorname{Hom}}_\beta(\PP^1,X)$.
Thus some reduced irreducible component $H$, a quasi-projective
variety, has evaluation at $1$ dominant onto $X$.

By characteristic-zero generic smoothness, at a general smooth point
$[h]\in H$ its evaluation differential is surjective. It factors as
\[
T_{[h]}H\hookrightarrow H^0(\PP^1,h^*TX(-[0]))
\longrightarrow T_{h(1)}X.
\]
Thus the section-evaluation map is surjective. If a splitting summand
of $h^*TX$ had nonpositive degree, it would contribute no sections
after twisting by $-[0]$, contradicting surjectivity. Every summand
therefore has positive degree, so $h$ is very free.

Very freeness is equivalent to $H^1(f^*TX(-2[0]))=0$, hence defines
an open locus by semicontinuity. By
Lemma~\ref{cf:deformation-facts}\textup{(\ref{cf:jet-smoothness})},
the derivative map $[f]\mapsto df_0(\partial_u)$ on
$\underline{\operatorname{Hom}}_\beta(\PP^1,X;0\mapsto x)$ is smooth
at every very free map. Its image on the nonempty very free locus
is therefore open in $T_xX$, and contains a basis of $T_xX$.
\end{proof}

\begin{corollary}\label{cf:every-point}
Every point $y\in Y$ lies on a balanced transverse map.
Moreover, $2n+1$ such maps through $y$ can be chosen whose tangent
lines at $y$ span $T_yY$.
\end{corollary}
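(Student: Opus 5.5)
The plan is to apply Proposition~\ref{cf:very-free-propagation} with $X=Y$ and $\beta=g_*[\PP^1]$, where $g$ is the balanced map of Proposition~\ref{cf:first-conic}. For $\overline M$ we take the closure in $\overline M_{0,2}(Y,\beta)$ of the image of the irreducible component of $\underline{\operatorname{Hom}}(\PP^1,Y)$ containing $[g]$, with markings $0,\infty$.

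\textbf{Dominance.} Two-point evaluation on $\overline M$ is dominant by Proposition~\ref{cf:first-conic}\textup{(\ref{cf:first-conic-evaluation})}.

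\textbf{Boundary bound.} Every map in $\overline M$ has contact degree two, since degree is locally constant in flat families (Lemma~\ref{cf:deformation-facts}\textup{(\ref{cf:degree-constancy})}). For a stable map with singular domain, every non-contracted component has positive $L$-degree because $L$ is ample. The domain is a tree of rational curves carrying total $L$-degree two, so either exactly two components are non-contracted, each a contact line, or exactly one is non-contracted. The second case cannot occur at the boundary in a harmful way. Indeed, if only one component $C'$ is non-contracted, then both marked images lie on $f(C')$, a rational curve of contact degree two through $x$. In that case $e_2(\text{point})$ still lies in the union of such curves through $x$. So I must also bound the locus of images of degree-two curves through $x$ coming from boundary points. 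A cleaner route is to note that $B_x$ is the image of a closed boundary set, and to handle the two cases as follows.
\begin{enumerate}
\renewcommand{\labelenumi}{\textup{(\roman{enumi})}}
\item If the two non-contracted components are contact lines, the connected image passing through $x$ lies in the chain of at most two contact lines from $x$. Hence $B_x\subset R_x$, which has dimension at most $2n<2n+1$ by Lemma~\ref{cf:two-line-boundary}.
\item If a single component of degree two carries everything, the markings lie on contracted components, which are glued to it. The marked images are then points of one irreducible rational curve $f(C')$ through $x$. Here the relevant part of $B_x$ is contained in $e_2$ of the locus where both markings collide onto a ghost. Such a ghost component meets the main component in at least $3-2\ge1$ special point, and if both markings sit on the same ghost, their images coincide. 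So this part of $B_x$ is just $\{x\}$. If the markings sit on different ghosts attached at distinct points, the main component contains both attachment points, and the image point $e_2$ again runs over a subvariety. This is the step I would check most carefully. I would argue instead that stability forces each ghost to carry at least three special points, so a ghost carries only one marking plus at least two nodes. A tree with one non-contracted component and at most two markings cannot satisfy this unless a single ghost carries both markings and one node. Thus the image of $e_2$ in this case is $\{x\}$.
\end{enumerate}
In both cases $B_x\subset R_x\ne Y$ for every $x$.

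\textbf{Conclusion.} Proposition~\ref{cf:very-free-propagation} then gives, through every $y$, very free maps of class $\beta$ whose tangent vectors at $y$ form a nonempty open subset of $T_yY$. These maps have contact degree two, so they are balanced by the remark after Definition~\ref{cf:balanced-definition}. Since $D_y$ is a proper hyperplane, the open set of tangent vectors contains vectors outside $D_y$, and we may choose $2n+1$ of them spanning $T_yY$, all outside $D_y$. Each such map is transverse at $0$, hence everywhere by Lemma~\ref{cf:degree-two-transversality}, and is therefore balanced transverse. The main obstacle is the case analysis of boundary stable maps in the previous paragraph, which ensures $B_x\subset R_x$.
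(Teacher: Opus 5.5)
Your proposal is correct and follows the paper's proof. You use the same $\overline M$ built from the Hom component through the map of Proposition~\ref{cf:first-conic}, the same dominance input, the same reduction of $B_y$ to $R_y$ via Lemma~\ref{cf:two-line-boundary}, and the same choice of a basis outside $D_y$. Your hedged case~(ii) is settled exactly by the paper's count: a maximal contracted subtree with $r$ vertices and $k$ markings meets the unique non-contracted component in one node, so stability gives $2(r-1)+1+k\ge 3r$, i.e.\ $k\ge r+1\ge 2$; this forces both markings onto a single ghost, and hence $e_2=e_1$.
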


\begin{proof}
Choose $f_0:\PP^1\to Y$ as in Proposition~\ref{cf:first-conic}, and let
$\beta=(f_0)_*[\PP^1]\in H_2(Y,\ZZ)$ be its curve class.
Map the Hom component through $[f_0]$ to the coarse moduli space
$\overline M_{0,2}(Y,\beta)$ by $[h]\mapsto[\PP^1,0,\infty,h]$,
and let $\overline M$ be the reduced closure of its image.
This is a projective variety \cite{BM,FP}. The map preserves
two-point evaluation, so
Proposition~\ref{cf:first-conic}\textup{(\ref{cf:first-conic-evaluation})}
implies that $e=(e_1,e_2):\overline M\to Y\times Y$ is dominant.

Fix $y\in Y$. We claim that the boundary image $B_y$ of
Proposition~\ref{cf:very-free-propagation} is contained in $R_y$.
Consider a stable map in this boundary. Its domain is nodal of
arithmetic genus zero, so its dual graph is a tree; since the domain
is singular, it is reducible. The total contact degree is two, so
ampleness of $L$ allows at most two nonconstant components. If there
are two, their images form a chain of contact lines containing both
marked images. Otherwise there is one nonconstant component, and
reducibility of the domain forces a contracted component. A maximal
contracted tree attaches once to the nonconstant component.
If it has $r\ge1$ vertices and $k$ markings, stability gives
\[
2(r-1)+1+k\ge3r,\qquad\text{hence~} k\ge r+1\ge2.
\]
Both markings then have the same image $y$. Thus $B_y\subset R_y$,
which is a proper closed subset of $Y$ by
Lemma~\ref{cf:two-line-boundary}.

The open subset of $T_yY$ supplied by
Proposition~\ref{cf:very-free-propagation} contains a basis outside
$D_y$. Choose corresponding very free maps $f_1,\ldots,f_{2n+1}$
of class $\beta$ with $f_i(0)=y$. Their contact degree is two, so
they are balanced. Each is transverse at $0$, hence everywhere
transverse by Lemma~\ref{cf:degree-two-transversality}.
\end{proof}

Corollary~\ref{cf:every-point} supplies the balanced transverse maps
used in the next section. The two points in the incidence construction
always belong to the source; injectivity of the maps is not assumed.

\section{Tangent sections and contact homogeneity}\label{cf:sec-tangent-sections}

\subsection{General tools for rational curves}\label{cf:sec-general-tools}

The results in this subsection do not require a contact structure.
We begin with an extension lemma for line-bundle-valued relative
differentials on families of rational curves.

\noindent\textbf{The two-point tensor and nodal extension.}
Let $p:\mathcal C\to S$ be a proper flat morphism of schemes with nodal
curve fibres, where $S$ is reduced and of finite type over $\CC$.
Suppose that every geometric
fibre is either $\PP^1$ or two projective lines meeting at one node, and
that the smooth-fibre locus $S^\circ$ is dense in every component of $S$.
Let $\mathcal L$ be a line bundle of degree two on smooth fibres and
degree one on each component of a reducible fibre. Write
$\omega=\omega_{\mathcal C/S}$ and
$\rho:\Omega^1_{\mathcal C/S}\to\omega$ for the canonical comparison map.
The sheaf $\omega$ is invertible, restricting to $\OO_{\PP^1}(-2)$ on a
smooth fibre and to $\omega_{C_i}(q_i)\simeq\OO_{\PP^1}(-1)$ on each
component $C_i$ of a nodal fibre, where $q_i$ is the point over the node.
The map $\rho$ is an isomorphism on the smooth locus of $p$.
Set $\mathcal C^\circ=p^{-1}(S^\circ)$, and let
$\mathrm{pr}_a,\mathrm{pr}_b:\mathcal C\times_S\mathcal C\to\mathcal C$
be the two projections. We use the following two facts:
\begin{enumerate}
\renewcommand{\labelenumi}{\textup{(\arabic{enumi})}}
\item The smooth family $\mathcal C^\circ\to S^\circ$ is
\'etale-locally trivial with fibre $\PP^1$.
\item For an affine coordinate $u$ on $\PP^1$, the tensor
\[
q(a,b)=(u(a)-u(b))^2\partial_{u,a}\otimes\partial_{u,b}
\]
is regular on $\PP^1\times\PP^1$ and invariant under simultaneous
fractional linear changes of coordinate in the two factors.
\end{enumerate}
Thus the local tensors agree on overlaps and descend to a regular
section of $\mathrm{pr}_a^*\omega^{-1}\otimes\mathrm{pr}_b^*\omega^{-1}$
on $\mathcal C^\circ\times_{S^\circ}\mathcal C^\circ$. On each smooth
fibre product, its zero divisor is twice the diagonal.
This open set is dense in every component of the
reduced two-point family $\mathcal C\times_S\mathcal C$, so $q$
defines a canonical rational section on the entire two-point family.

\begin{lemma}\label{cf:cancellation}
In this setting, for every $\vartheta\in
H^0(\mathcal C,\Omega^1_{\mathcal C/S}\otimes\mathcal L)$, the tensor product
\[
\mathrm{pr}_a^*((\rho\otimes1)\vartheta)\otimes q
\]
extends uniquely to a regular section of
$\mathrm{pr}_a^*\mathcal L\otimes\mathrm{pr}_b^*\omega^{-1}$ on
$\mathcal C\times_S\mathcal C$.
Here we use the canonical identification
$\mathrm{pr}_a^*\omega\otimes\mathrm{pr}_a^*\omega^{-1}\simeq\OO$.
\end{lemma}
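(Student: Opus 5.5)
The plan is to prove uniqueness by a density argument and existence by a local computation near the nodes. The computation will show that the only pole of $q$ is a simple pole along the nodal locus, and that the fibrewise degree of $\vartheta$ forces $(\rho\otimes1)\vartheta$ to vanish to first order along that same locus.

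\emph{Uniqueness.} We argued above that $\mathcal C^\circ\times_{S^\circ}\mathcal C^\circ$ is dense in every component of the reduced two-point family. A section of a line bundle on a reduced scheme that vanishes on a dense open subset is zero. Hence any two regular extensions agree.

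\emph{Existence.} The problem is local on $S$ and on $\mathcal C\times_S\mathcal C$. Over points of $S^\circ$ there is nothing to prove. At a point $s_0$ with nodal fibre $C_1\cup C_2$, I would work \'etale-locally, or in completed local rings, around the node. There the family has the standard form
\[
\OO_S[u,v]/(uv-t)
\]
for some function $t$ on $S$. The nodal locus is $\{t=0\}$, and $\omega$ is generated by the logarithmic form $\eta=du/u=-dv/v$. With $e=\eta^{-1}$, one has $\partial_u=u\,e$ and $\partial_v=-v\,e$ on the smooth fibres.

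The first step is to compute $q$ in these terms. Take $u$ as the affine coordinate on smooth fibres, and consider $a$ and $b$ on opposite branches, with $u_b=t/v_b$. Substituting into the definition of $q$ gives
\[
q=\frac{u_a\,(u_av_b-t)^2}{t\,v_b}\,e_a\otimes e_b .
\]
Configurations with both points on the same branch, or with both points away from the node, are handled by the same substitution and are regular. So I expect $q$ to have at worst a simple pole along $t=0$, located on the components of type $C_i\times C_j$ with $i\ne j$ of the nodal two-point fibre.

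The second step, which I regard as the heart of the lemma, is to show that
\[
(\rho\otimes1)\vartheta\in t\cdot(\omega\otimes\mathcal L).
\]
On each component $C_i$ of a nodal fibre, the image of $\vartheta$ in $\Omega^1_{C_i}\otimes\mathcal L|_{C_i}\simeq\OO(-2)\otimes\OO(1)=\OO(-1)$ is zero. Hence $\vartheta$ restricted to the nodal fibre lies in the torsion of $\Omega^1$, which is supported at the node. Since $\rho$ kills this torsion, we get $(\rho\otimes1)\vartheta|_{\mathcal C_{s_0}}=0$.

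Locally, $\rho(du)=u\eta$ and $\rho(dv)=-v\eta$, so $(\rho\otimes1)\vartheta=h\,\eta$ with $h$ a regular function. The function $h$ vanishes on every nodal fibre, so it lies in the ideal of the nodal locus. The difficulty is to pass from vanishing on fibres to divisibility by $t$ when $S$ is merely reduced. For this I would use that $p$ is flat, that the nodal fibres are cut out by $t$, and that $S^\circ$ is dense. If needed, I would first pass to the normalization of $S$, or to a smooth curve germ through $s_0$, which reduces to the model computed in Lemma~\ref{cf:fixed-end-obstruction}. Regularity of the resulting section can then be tested there, because the comparison is between sections of a line bundle on a reduced scheme.

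Granting the divisibility, the product $h_a\,q$ has the factor $t$ cancelling the pole of $q$. This gives a regular section of
\[
\mathrm{pr}_a^*\mathcal L\otimes\mathrm{pr}_b^*\omega^{-1}
\]
near each point of the nodal two-point fibre, including the point where both $a$ and $b$ are at the node. These local extensions agree with the given section on the dense smooth locus. By uniqueness they glue to the required global extension.
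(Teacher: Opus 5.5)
\textbf{There is a genuine gap.} Your uniqueness argument is fine, and you rightly single out the divisibility $(\rho\otimes1)\vartheta\in t\cdot(\omega\otimes\mathcal L)$ as the heart of the lemma. But you do not prove it, and the route you sketch cannot prove it.

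Vanishing of $(\rho\otimes1)\vartheta$ on each closed nodal fibre only puts the local coefficient $h$ in the ideal of the \emph{reduced} nodal locus. This is strictly weaker than $h\in(t)$ whenever $R/(t)$ is nonreduced. For example, for $uv=\sigma^2$ over $\CC[\sigma]$ it gives only $h\in(\sigma)$, which leaves a possible simple pole in $hq$.

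Your proposed fixes do not help:
\begin{itemize}
\item Along a smooth curve germ, $t$ pulls back to $\sigma^k$ rather than to a parameter. So you do not land in the model $uv=s$ of Lemma~\ref{cf:fixed-end-obstruction}.
\item Regularity of a rational section on a reduced non-normal scheme cannot be tested on its normalization or on curves; think of $y/x$ on the cusp $y^2=x^3$.
\item No argument local at the node can work either. The coefficient $u$ of $\rho(du)=u\eta$ lies in the local image of $\rho$ but not in $(t)$.
\end{itemize}

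The missing input is constancy along fibres, used scheme-theoretically. This is the paper's argument:
\begin{itemize}
\item $\mathcal L\otimes\omega$ has multidegree zero on every fibre. Cohomology and base change therefore give a line bundle $B=p_*(\mathcal L\otimes\omega)$ with $p^*B\simeq\mathcal L\otimes\omega$.
\item Hence $(\rho\otimes1)\vartheta=p^*s$ for some $s\in H^0(S,B)$, so locally its coefficient is a function $r\in R$ on the base.
\item Since $\rho(dx)=x\eta$ and $\rho(dy)=-y\eta$, this coefficient also lies in $(x,y)$.
\item Because $R[x,y]/(xy-t,x,y)=R/(t)$, one gets $r\in tR$. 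This needs no reducedness of $R/(t)$.
\end{itemize}

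\textbf{A second, smaller gap.} You also need $tq$ to be regular on the entire two-point family, and your computation does not show this.
\begin{itemize}
\item Your frame has a slip. From $\eta=du/u$ one gets $\partial_u=u^{-1}e$, and the correct formula near the node is $q=t^{-1}(u_av_b+u_bv_a-2t)\,e_a\otimes e_b$.
\item Your claim that configurations with both points away from the node are regular is false for opposite branches. With fibre coordinates $x$ near $a$ and $y$ near $b$, one has $q=-t^{-1}(x_ay_b-t)^2\,\partial_{x,a}\otimes\partial_{y,b}$, which has a genuine pole at $t=0$.
\end{itemize}
The paper treats all configurations at once. It puts the family \'etale-locally in the form $XY=tZ^2\subset\PP^2_R$. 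There $tq=e_{x,a}\otimes e_{y,b}+e_{y,a}\otimes e_{x,b}-2t\,e_{0,a}\otimes e_{0,b}$ is visibly a combination of global sections of $\omega^{-1}$.
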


\begin{proof}
Since $\mathcal L\otimes\omega$ has multidegree zero on each rational
tree fibre, cohomology and base change give a line bundle
$B=p_*(\mathcal L\otimes\omega)$ and an evaluation isomorphism
\[
p^*B\simeq\mathcal L\otimes\omega.
\]
Also $p_*\OO_{\mathcal C}=\OO_S$, so
$(\rho\otimes1)\vartheta=p^*s$ for a unique $s\in H^0(S,B)$.

On each fibre, $\omega^{-1}$ is very ample with $h^0=3$ and $h^1=0$.
Since $p$ is proper and flat, cohomology and base change give a
rank-three vector bundle $E=p_*\omega^{-1}$, compatible with base change.
Evaluation defines the relative anticanonical embedding
\[
\Phi:\mathcal C\hookrightarrow\PP_S(E^\vee).
\]
In particular, $p$ is projective over $S$.
See also \cite[Proposition~2.7]{Tanaka} for details of this construction.
Similarly, $p_*\omega^{-2}$ is locally free of rank five.
The restriction map on quadratic forms
$\operatorname{Sym}^2E\to p_*\omega^{-2}$ is surjective on every fibre,
hence surjective with a line-bundle kernel. A local generator defines
a flat quadratic hypersurface containing $\Phi(\mathcal C)$ and
agreeing with it scheme-theoretically on every fibre.
Since $\mathcal C$ is flat over $S$, the ideal sequence of
$\Phi(\mathcal C)$ in this hypersurface remains exact after taking
fibres. Fibrewise equality and Nakayama's lemma therefore make the
ideal vanish.

Choose an affine neighbourhood $\operatorname{Spec}A\subset S$ of a
point with nodal fibre, trivializing $E$ and the line of quadratic
relations. Near that fibre, the nondegenerate binary part of the
quadratic equation splits after an affine \'etale base change
$\operatorname{Spec}R\to\operatorname{Spec}A$. Taking its factors as
coordinates and eliminating the $XZ$ and $YZ$ terms gives
\[
\mathcal C_R:=\mathcal C\times_S\operatorname{Spec}R
\simeq\{XY=tZ^2\}\subset\PP^2_R,\qquad t\in R.
\]
All bundles and sections in the following local calculation are
understood after this base change. Here $t$ is the actual smoothing
element, which need not be a parameter on the base.
Density of $S^\circ$ implies that $t$ does not vanish identically
on any component of $\operatorname{Spec}R$. Since $R$ is reduced,
$t$ is a nonzerodivisor.

On $Z\ne0$, write $xy=t$ and choose the dualizing generator $\eta$ so that
\[
\rho(dx)=x\eta,\qquad \rho(dy)=-y\eta.
\]
Trivialize $B$ and write $s=re$ with $r\in R$. Since $p^*s$ lies in
the image of $\rho\otimes1$, its coefficient belongs to $(x,y)$ in
$R[x,y]/(xy-t)$. Reduction modulo $(x,y)$ gives $r=0$ in $R/(t)$.
Thus $s=ts_1$ with $s_1$ regular, even when $R/(t)$ is nonreduced.

Adjunction identifies $\OO_{\mathcal C_R}(1)$ with $\omega^{-1}$.
The sections corresponding to $X,Y,Z$ can be
normalized on $Z\ne0$ as
\[
e_x=x\eta^\vee,\qquad e_y=y\eta^\vee,\qquad e_0=\eta^\vee.
\]
On the chart $Z\ne0$ over $t\ne0$, $x$ is a fibre coordinate.
Since $\eta=dx/x$, its dual is $\eta^\vee=x\partial_x$.
Substituting into the formula for $q$ gives
\[
tq=e_{x,a}\otimes e_{y,b}+e_{y,a}\otimes e_{x,b}
-2t\,e_{0,a}\otimes e_{0,b}=:Q.
\]
Since $e_x,e_y,e_0$ are global regular sections of $\omega^{-1}$,
$Q$ is regular on the entire two-point family. Writing
$(\rho\otimes1)\vartheta=t\widetilde\vartheta_1$ therefore extends
the required section as
\[
\mathrm{pr}_a^*\widetilde\vartheta_1\otimes Q.
\]
The two-point family is reduced, since it is flat over a reduced base
with geometrically reduced fibres. Flatness also makes every
irreducible component dominate a component of $S$. Thus the inverse
image of $S^\circ$ is dense in every component, and the extensions
agree on overlaps and descend uniquely.
\end{proof}

\noindent\textbf{First jets and preservation of tangent directions.}
We next relate global sections on a projective source to the tangent
directions of rational curves on the target.

Let $X$ be a smooth projective variety, and let
$\mathcal K\subset\RatCurves^n(X)$ be an irreducible dominating
component of the normalized space of rational curves; thus
$\mathcal K$ is a normal quasi-projective variety.
Let $\mathcal V\subset\PP(TX)$ be the reduced closure of
\[
\bigl(\nu(t),[d\nu_t(T_t\PP^1)]\bigr),
\]
where $\nu$ ranges over birational parametrizations of members of
$\mathcal K$ and $d\nu_t\ne0$. It is an integral projective variety:
it is the reduced image closure of the rational tangent map from the
irreducible normalized universal family. Write $p:\mathcal V\to X$
for the projection.

For a vector field $v$ on a smooth variety $E$, let $\widehat v$ be
its natural lift to $\PP(TE)$, obtained by differentiating its
infinitesimal automorphism. Equivalently, the Lie derivative on
$\Omega_E^1$ induces a degree-preserving derivation of
$\operatorname{Sym}\Omega_E^1$, and hence a vector field on
$\PP(TE)$. In local coordinates $x_i$, if
$v=\sum v_i\partial_{x_i}$, the action on one-forms is
\[
\mathcal L_v(dx_i)=d(v_i)
=\sum_j\frac{\partial v_i}{\partial x_j}\,dx_j.
\]
A vector field is tangent to a reduced
closed subscheme if it preserves its ideal sheaf.

For a vector bundle $F$ on a smooth curve $C$, the first jet of a
section $s$ at $p$ is its restriction $j_p^1(s)=s|_{2p}$, where
$2p=\operatorname{Spec}(\OO_{C,p}/\mathfrak m_p^2)$.
In a local coordinate and a local frame, this records its value and
first derivative at $p$.

\begin{lemma}\label{cf:firstjet}
Let $E$ be a vector bundle on $\PP^1$, and let $g:B\to\PP^1$ be a
finite surjective morphism from a smooth projective curve, unramified
at $b$. Every $s\in H^0(B,g^*E)$ has the same first jet at $b$ as
$g^*s_0$ for some $s_0\in H^0(\PP^1,E)$.
\end{lemma}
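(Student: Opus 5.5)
The plan is to reduce to line bundles via Grothendieck's splitting. Then I would handle each summand by a degree case analysis, using that an unramified point identifies first-order neighbourhoods.

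First fix a splitting $E\simeq\bigoplus_i\OO(d_i)$. Pulling back gives $g^*E\simeq\bigoplus_i g^*\OO(d_i)$. Both $H^0(B,-)$ and the first-jet map $j^1_b$ are compatible with finite direct sums, so $s=\sum_i s_i$ with $s_i\in H^0(B,g^*\OO(d_i))$. It therefore suffices to find, for each $i$, a section $s_{0,i}\in H^0(\PP^1,\OO(d_i))$ with $j^1_b(s_i)=j^1_b(g^*s_{0,i})$, and then to set $s_0=\sum_i s_{0,i}$.

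Next I would record the key local fact. Put $p=g(b)$. Since $g$ is unramified at $b$, the map $\OO_{\PP^1,p}\to\OO_{B,b}$ sends a uniformizer to a uniformizer. Hence $g$ restricts to an isomorphism $2b\xrightarrow{\sim}2p$ of first-order neighbourhoods. Consequently, for any vector bundle $F$ on $\PP^1$, pullback induces an isomorphism
\[
F|_{2p}\xrightarrow{\ \sim\ }(g^*F)|_{2b},\qquad j^1_p(t)\longmapsto j^1_b(g^*t),
\]
so first jets of pulled-back sections at $b$ are exactly the pullbacks of first jets at $p$.

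Now I would split into three cases according to the degree $d=d_i$.
\begin{enumerate}
\item If $d<0$, then $g^*\OO(d)$ has degree $d\deg g<0$ on the smooth projective curve $B$. Hence it has no nonzero sections, $s_i=0$, and we take $s_{0,i}=0$.
\item If $d=0$, then $g^*\OO\simeq\OO_B$. Since $B$ is projective and connected (being a smooth projective curve, hence integral), $s_i$ is a constant. That constant is the pullback of the same constant on $\PP^1$.
\item If $d\ge1$, then $H^1(\PP^1,\OO(d)(-2[p]))=H^1(\OO(d-2))=0$. So the restriction $H^0(\PP^1,\OO(d))\to\OO(d)|_{2p}$ is surjective. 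Given $s_i$, take the jet $j^1_b(s_i)\in(g^*\OO(d))|_{2b}$, transport it to $\OO(d)|_{2p}$ by the isomorphism above, and lift it to some $s_{0,i}$. Then $j^1_b(g^*s_{0,i})=j^1_b(s_i)$.
\end{enumerate}

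Summing over $i$ gives the required $s_0$. There is no real obstacle; the only points needing care are the identification $2b\simeq 2p$, which is exactly where unramifiedness enters, and the connectedness of $B$ in the degree-zero case.
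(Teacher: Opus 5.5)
Your proof is correct and follows essentially the same route as the paper. You split $E$ into line bundles, kill negative-degree summands by degree and degree-zero summands by projectivity of $B$, realize arbitrary first jets at $p=g(b)$ for positive degree via $H^1(\PP^1,\OO(d-2))=0$, and use unramifiedness to identify $2b$ with $2p$; your explicit checks of that identification and of connectedness in the degree-zero case only spell out details the paper leaves implicit.
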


\begin{proof}
Split $E$ into line bundles $\OO(a)$. Negative-degree summands have
no sections after pullback. Degree-zero summands have constant
sections, since $B$ is projective. Put $p=g(b)$. For $a\ge1$, the
exact sequence
\[
0\longrightarrow\OO(a)(-2p)\longrightarrow\OO(a)
\longrightarrow\OO(a)|_{2p}\longrightarrow0
\]
and $H^1(\PP^1,\OO(a-2))=0$ show that global sections realize
arbitrary first jets at $p$.
Unramifiedness identifies these with first jets at $b$.
\end{proof}

\begin{proposition}\label{cf:jets}
Let $f:W\to X$ be a surjective generically finite morphism from a
projective variety, and let $S\in H^0(W,f^*TX)$. On any open subset
$E\subset W$ where $f$ is \'etale, put $v=(df)^{-1}(S|_E)$.
Under the identification $\PP(TE)=\PP(TX)\times_XE$, the vector field
$\widehat v$ is tangent to $\mathcal V_E:=\mathcal V\times_XE$.
Here $E$ is smooth and quasi-projective, and $\mathcal V_E$ is a
reduced quasi-projective scheme, possibly reducible.
\end{proposition}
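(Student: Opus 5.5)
Because $\mathcal V_E$ is reduced, tangency of $\widehat v$ may be tested on a dense set of points. It suffices to show that $\widehat v$ is tangent to $\mathcal V_E$ at a dense set of smooth points, or equivalently that $\widehat v$ annihilates the ideal of $\mathcal V_E$ at a dense set of points. Indeed, for a local generator $h$ of the ideal, $\widehat v(h)$ is regular, and if it vanishes on a dense subset of $\mathcal V_E$ it vanishes in $\OO_{\mathcal V_E}$ by reducedness. I would take as dense set the points $(w,[d\nu_t(\partial_t)])$ where $w\in E$, $\nu$ is a general birational parametrization of a member of $\mathcal K$ with $\nu(t)=f(w)$, and $d\nu_t\ne0$. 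Such points are dense in $\mathcal V$ by definition, and their preimages are dense in $\mathcal V_E$ because $E\to X$ is étale, so $\mathcal V_E\to\mathcal V$ is étale and every component of $\mathcal V_E$ dominates a component of $\mathcal V$ restricted to $f(E)$.

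Fix such a point. I would lift the curve to $W$. Let $\widetilde B$ be the normalization of a component of $\PP^1\times_XW$ passing through $(t,w)$, with maps $g:\widetilde B\to\PP^1$ and $\widetilde\nu:\widetilde B\to W$. For general $\nu$, the curve meets the open set over which $f$ is finite étale, and $g$ is finite surjective and unramified at the point $b$ over $(t,w)$, since $f$ is étale at $w$. Pull back $S$ to get $\widetilde\nu^*S\in H^0(\widetilde B,g^*\nu^*TX)$. Lemma~\ref{cf:firstjet} gives $s_0\in H^0(\PP^1,\nu^*TX)$ with the same first jet at $b$.

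The section $s_0$ is a first-order deformation $\nu_\varepsilon=\nu+\varepsilon s_0$ of $\nu$. Since $\mathcal K$ is an irreducible component and $\nu$ is general, $\nu$ is free and the Hom scheme is smooth at $\nu$. Hence $s_0$ is tangent to an actual curve of maps $\nu_\lambda$ parametrizing members of $\mathcal K$. The tangent lifts $(\nu_\lambda(t),[d(\nu_\lambda)_t])$ then lie in $\mathcal V$, so their $\lambda$-derivative at $\lambda=0$ is a tangent vector to $\mathcal V$. This derivative is the image of the first jet $j^1_t(s_0)$ under the natural map from first jets of sections along $\nu$ to $T\PP(TX)$ at $[d\nu_t]$: the value gives the base component, and the derivative modulo $d\nu_t$ gives the vertical component.

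It remains to identify the lift $\widehat v$ at $(w,[d\nu_t])$ with the same jet expression applied to $j^1_t$ of $v\circ\nu$, in the étale chart $E\simeq X$ locally. In local coordinates, $\widehat v$ at $(x,[\xi])$ has base component $v(x)$ and fibre component $\sum_j\xi_j\partial_j v_i$ modulo $\xi$. This equals the formula for the derivative of $(\nu_\lambda(t),[\nu_\lambda'(t)])$ when $\partial_\lambda\nu_\lambda=v\circ\nu$ to first order near $t$. Since $v\circ\nu$ agrees with $\widetilde\nu^*S$ near $b$ via $df$, and hence with $s_0$ to first order, $\widehat v$ is tangent to $\mathcal V_E$ at the chosen point.

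The main obstacle is the genericity bookkeeping in the lifting step. One must ensure that a dense set of points of every component of $\mathcal V_E$ arises from curves $\nu$ that are free, that stay within the étale locus near $t$, and that give a finite $g$ unramified at $b$. Over points of $E$ not in the finite étale locus, I would restrict to the dense open preimage of the locus where $f$ is finite, and use the density argument from the first paragraph.
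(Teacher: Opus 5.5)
Your proposal is correct and follows essentially the same route as the paper's proof. The shared steps are: reduction to a dense set of general tangent directions via reducedness, lifting $\nu$ to the normalized component of $\PP^1\times_XW$ through $(t,w)$, matching first jets by Lemma~\ref{cf:firstjet}, realizing $s_0$ by a deformation inside $\mathcal K$ using freeness, and identifying the resulting velocity with $\widehat v$. You check this last identification explicitly in coordinates, where the paper cites formal \'etaleness and the jet equality. Your extra restriction to the locus where $f$ is finite is harmless but unnecessary: \'etaleness of $f$ at $w$ already makes the component through $(t,w)$ a projective curve dominating $\PP^1$, so $g$ is automatically finite.
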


\begin{proof}
\emph{Reduction to general tangent directions.}
Since $\mathcal V_E$ is reduced, it suffices to prove tangency on a
dense subset. Tangent directions of general members of $\mathcal K$
at general points are dense in $\mathcal V$; their lifts are dense in
$\mathcal V_E$, since $\mathcal V_E\to\mathcal V$ is \'etale.
We will realize $\widehat v$ at such a direction as the induced
velocity of a curve deformation within $\mathcal K$.

Choose a general birational parametrization $\nu:\PP^1\to X$ of a
member of $\mathcal K$, a general point $t_0$ with $d\nu_{t_0}\ne0$,
and $e\in E$ over $\nu(t_0)$. Since $\mathcal K$ is dominating,
$\nu$ is free \cite[II.3.10]{Kollar}.

\smallskip
\noindent \emph{Matching the first jet.}
To apply Lemma~\ref{cf:firstjet}, let $B$ be the normalization of the
reduced irreducible component of $W\times_X\PP^1$ through $(e,t_0)$,
with projections $g:B\to\PP^1$ and $j:B\to W$.
Since $f$ is \'etale at $e$, this component is a curve dominating
$\PP^1$. Thus $B$ is a smooth projective curve, $g$ is finite and
surjective, and it is unramified at the point $b$ over $(e,t_0)$.
The lemma gives $s_0\in H^0(\PP^1,\nu^*TX)$ such that
\[
j_b^1(g^*s_0)=j_b^1(j^*S).
\]
Over $j^{-1}(E)$, the identity $df(v)=S$ identifies $j^*S$ with $v$
along the lifted curve.

\smallskip
\noindent \emph{Deforming within the component.}
Freeness gives $H^1(\PP^1,\nu^*TX)=0$, so
Lemma~\ref{cf:deformation-facts}\textup{(\ref{cf:hom-smoothness})}
realizes $s_0$ as the velocity of a deformation $\nu_\lambda$ over a
smooth pointed algebraic curve. Because $\mathcal K$ is an
\emph{irreducible component} and $\nu$ is general, the deformed image
curves remain in $\mathcal K$ after shrinking the parameter curve. Consequently
\[
\bigl(\nu_\lambda(t_0),
[d\nu_{\lambda,t_0}(T_{t_0}\PP^1)]\bigr)\in\mathcal V.
\]
The velocity of this curve of tangent directions depends only on the
first jet of $s_0$. Formal \'etaleness lifts its first-order deformation
uniquely to $\mathcal V_E$ through the chosen point over $e$.
The jet equality identifies this lifted velocity with $\widehat v$,
proving tangency at the chosen general direction.

Finally, for any local function $h$ vanishing on $\mathcal V_E$,
the constructed deformations give $\widehat v(h)=0$ on the dense set
above. Reducedness gives $\widehat v(h)|_{\mathcal V_E}=0$ everywhere,
so $\widehat v$ preserves the ideal of $\mathcal V_E$.
\end{proof}

\noindent\textbf{Smoothness from lifted vector fields.}
We next use infinitesimal preservation to remove the nonsmooth locus
of the projection. This does not require integrating a vector field.

\begin{lemma}\label{cf:smoothness}
Let $p:Z\to X$ be a proper dominant morphism of varieties, with
$X$ smooth. Suppose $p$ is smooth over a nonempty open subset of
$X$. At every $x\in X$, assume there are pointed \'etale neighbourhood schemes
$(E_i,e_i)\to(X,x)$ and vector fields $v_i$ on $E_i$ such that:
\begin{enumerate}
\item each $v_i$ lifts to a vector field on all of $Z\times_XE_i$,
compatible with the projection to $E_i$;
\item the images of $v_i(e_i)$ span $T_xX$.
\end{enumerate}
Then $p$ is smooth.
\end{lemma}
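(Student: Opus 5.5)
The plan is to show that the non-smooth locus of $p$ is empty by proving it is invariant under the local flows, at the infinitesimal level. Let $\Sigma\subset Z$ be the closed locus where $p$ fails to be smooth, and let $N=p(\Sigma)\subset X$. Since $p$ is proper, $N$ is closed. By hypothesis $N\ne X$. The aim is to show $N=\emptyset$.

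\emph{Step 1: the lifts preserve the non-smooth locus.} Fix $x\in X$ and one of the given \'etale neighbourhoods $(E_i,e_i)$. Base change gives $p_i:Z_i:=Z\times_XE_i\to E_i$, which is proper, and its non-smooth locus is $\Sigma_i=\Sigma\times_XE_i$. Smoothness can be checked \'etale locally on the base, so nothing is lost. Let $\widetilde v_i$ be the lift of $v_i$ to $Z_i$. It is a derivation of $\OO_{Z_i}$ compatible with the derivation $v_i$ of $\OO_{E_i}$.

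The non-smooth locus of a morphism to a smooth base is cut out by a Fitting ideal of $\Omega^1_{Z_i/E_i}$. Here I use $\Fitt$ of the relative differentials in the fibre dimension, taking the relative dimension $d=\dim Z-\dim X$ on the smooth part, together with flatness. A derivation compatible with $v_i$ acts on $\Omega^1_{Z_i/E_i}$ by Lie derivative. Therefore it preserves the Fitting ideals, since Fitting ideals are functorial under automorphisms and the first-order automorphism $1+\varepsilon\widetilde v_i$ of $Z_i\times B_\varepsilon$ over $1+\varepsilon v_i$ carries the ideal to itself modulo $\varepsilon$.

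I expect this to be the main obstacle. The non-smooth locus also involves non-flatness, and its ideal need not literally be a Fitting ideal. So first I would handle the dimension-jump locus: fibre dimension is upper semicontinuous and defines closed sets canonically, and these are preserved by any infinitesimal automorphism of the pair $(Z_i\times B_\varepsilon\to E_i\times B_\varepsilon)$. The cleanest formulation is to observe that $\Sigma_i$ is defined functorially and is therefore stable under the first-order automorphism. Consequently the reduced ideal of $\Sigma_i$, and hence of $N_i=p_i(\Sigma_i)$, is preserved by $\widetilde v_i$ and by $v_i$ respectively. For $N_i$, pushforward along the proper map $p_i$ gives invariance of its ideal, $\ker(\OO_{E_i}\to p_{i*}\OO_{\Sigma_i})$, under $v_i$.

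\emph{Step 2: an invariant proper closed set cannot exist.} We now have $N_i\subsetneq E_i$, closed and reduced, whose ideal is preserved by $v_i$. It follows that $v_i$ is tangent to $N_i$, so $v_i(e)\in T_eN_i$ (Zariski tangent space) whenever $e\in N_i$.

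Suppose $N\ne\emptyset$. Choose $x$ to be a general, hence smooth, point of an irreducible component of $N$ of maximal dimension, and choose each $e_i$ over $x$. Then $T_{e_i}N_i\simeq T_xN$ is a proper subspace of $T_xX$, because $\dim N<\dim X$ and $x$ is a smooth point of $N$. Every $v_i(e_i)$ lies in this subspace. This contradicts hypothesis (2), which says their images span $T_xX$. Hence $N=\emptyset$, and $p$ is smooth.
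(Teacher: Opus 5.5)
Your outline matches the paper's proof: the non-smooth locus $\Sigma$, its closed image $N$, invariance under the first-order automorphism $1+\varepsilon\widetilde v_i$, and a contradiction at a smooth point of $N$. Step~2 and the passage from $\Sigma_i$ to $N_i$ are fine. The gap is in Step~1, exactly where you flagged it.

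Your fallback, that $\Sigma_i$ is ``defined functorially'' and hence stable under the first-order automorphism, carries no information. The automorphism $1+\varepsilon\widetilde v_i$ is the identity on the underlying space of $Z_i\times B_\varepsilon$. The reduced subscheme supported on the non-smooth locus lies in the fibre $\varepsilon=0$, where the automorphism is also the identity. So its preservation is automatic and imposes no condition on $\widetilde v_i$. The same holds for fibre-dimension strata, which in any case miss singular points of fibres where the dimension does not jump. Likewise, preserving an ideal ``modulo $\varepsilon$'' is vacuous. To extract $\widetilde v_i(I)\subset I$, you need an ideal $J\subset\OO_{Z_i}$ attached to the morphism compatibly with base change, so that the automorphism preserves its flat pullback $J\oplus\varepsilon J$. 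The $\varepsilon$-coefficient of $h+\varepsilon\widetilde v_i(h)$ then gives $\widetilde v_i(J)\subset J$.

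The Fitting ideal you first proposed does this, and the flatness worry is unfounded here. Put $r=\dim Z-\dim X$, and suppose $\dim\Omega^1_{Z/X}\otimes k(z)\le r$ at a closed point $z$ over $x$. Since $X$ is smooth, $\dim T_zZ\le r+\dim T_xX=\dim Z$. Because $Z$ is a variety, this forces $Z$ to be smooth at $z$ and $dp_z$ to be surjective, so $p$ is smooth at $z$, flatness included. Conversely, $\Omega^1_{Z/X}$ is locally free of rank $r$ where $p$ is smooth. Hence $\Sigma=V(\Fitt_r\Omega^1_{Z/X})$ set-theoretically, and this persists after \'etale base change.

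You must also supply the characteristic-zero step from $J$ to its radical, which your proposal skips. If $h^m\in J$ and $\widetilde v_i(J)\subset J$, repeated differentiation gives $(\widetilde v_i h)^{2m-1}\in J$. Hence $\widetilde v_i$ preserves the reduced ideal of $\Sigma_i$. With these two points supplied, your argument becomes the paper's proof.
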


\begin{proof}
\emph{The nonsmooth locus and its image.}
Let $B\subset Z$ be the reduced closed subscheme supported on the
nonsmooth locus of $p$. Properness of $p$ makes $A=p(B)$ closed;
give it the reduced induced scheme structure.
Smoothness over a nonempty open subset gives $A\ne X$.
It suffices to prove that $A$ is empty.

\smallskip
\noindent \emph{Preservation by the vector fields.}
We show that the lifted fields preserve $B$, so the fields on the
base preserve $A$. Put $r=\dim Z-\dim X$. The locus $B$ is cut out
set-theoretically by the Fitting ideal
\[
J=\Fitt_r(\Omega^1_{Z/X}).
\]
Indeed, at a closed point $z$ its complement is the condition
$\dim\ker(dp_z)\le r$. Since $\dim T_zZ\ge\dim Z$, this forces
$Z$ to be smooth at $z$ and $dp_z$ to be surjective. The converse
holds at every smooth point of $p$. This also applies after \'etale
base change.

Over $\CC[\epsilon]/(\epsilon^2)$, the compatible derivations
$\widetilde v,v$ define automorphisms $1+\epsilon\widetilde v$ and
$1+\epsilon v$ commuting with the projection, hence an isomorphism
of relative cotangent sheaves. Fitting ideals commute with base change
and are invariant under these isomorphisms, so $1+\epsilon\widetilde v$
preserves the extended ideal $J_\epsilon:=J\oplus\epsilon J$.
For any local section $h\in J$, this gives
$h+\epsilon\widetilde v(h)\in J_\epsilon$; comparing
$\epsilon$-coefficients yields $\widetilde v(h)\in J$.
Thus $\widetilde v(J)\subset J$. In characteristic zero, a derivation
preserving an ideal also preserves its radical. Hence each lifted
field preserves the reduced nonsmooth locus $B_{E_i}$.

Consequently each $v_i$ preserves $A_{E_i}$: a local function vanishing
there pulls back to zero on $B_{E_i}$, and by compatibility the same
holds for its derivative. Since $B_{E_i}\to A_{E_i}$ is surjective and $A_{E_i}$
is reduced, that derivative vanishes on $A_{E_i}$.

\smallskip
\noindent \emph{The spanning condition.}
If $A$ were nonempty, choose a smooth point $x$ of $A$. Preservation
of $A_{E_i}$ implies that every $v_i(e_i)$ maps into
$T_xA\subsetneq T_xX$. This contradicts the assumption that these
values span $T_xX$. Thus $A$, and hence $B$, is empty, so $p$ is smooth.
\end{proof}

\subsection{Incidence varieties and tangent sections}\label{cf:sec-incidence}

The incidence construction of \cite{HLW} associates tangent sections to
balanced transverse curves. Using Lemma~\ref{cf:cancellation} to extend
these sections across the nodal boundary, we prove the version for
balanced transverse maps of Definition~\ref{cf:balanced-definition},
keeping track of points on the source.

Let $Y$ be a smooth projective contact variety of dimension $2n+1$, with
$n\geq2$, contact form $\theta:TY\to L$, and ample $L$.
Fix a balanced transverse map $f_0:\PP^1\to Y$, a source
point $a_0$, and $z=f_0(a_0)$. Set $\beta=(f_0)_*[\PP^1]$.
Let $\mathcal M$ be the reduced irreducible component of
$\overline{\mathcal M}_{0,0}(Y,\beta)$ through $[f_0]$.
It is a proper integral Deligne--Mumford stack, with projective
coarse moduli variety $\overline M$ \cite[Theorem~1]{FP}. Write
\[
p:U\longrightarrow\mathcal M,\qquad F:U\longrightarrow Y
\]
for its universal domain and evaluation map. The morphism $p$ is
representable, proper, flat and nodal, and $U$ is a proper
Deligne--Mumford stack. The component is unique at $[f_0]$, since
$H^1(\PP^1,f_0^*TY)=0$.
Let $\mathcal M^\circ$ be the open locus of balanced transverse maps,
and put $U^\circ=U|_{\mathcal M^\circ}$.
Such maps are birational onto their images: factoring through the
normalization and applying Riemann--Hurwitz rules out a nontrivial
unramified cover of $\PP^1$. Their stable-map automorphisms are therefore
trivial, so $\mathcal M^\circ$ is identified with an open subscheme of
$\overline M$. The vanishing $H^1(f^*TY)=0$ makes it a smooth
quasi-projective variety. The family $U^\circ\to\mathcal M^\circ$ is
smooth and projective: the relative anticanonical bundle embeds it
into a $\PP^2$-bundle. Thus $U^\circ$ is also a smooth quasi-projective
variety.

The fibre of $F|_{U^\circ}$ over $z$ is smooth at $(f,a)$, since
$H^1(f^*TY(-[a]))=0$
(Lemma~\ref{cf:deformation-facts}\textup{(\ref{cf:evaluation-smoothness})}).
Let $B_z$ be the unique reduced irreducible component of
$(F|_{U^\circ})^{-1}(z)$ through $(f_0,a_0)$.
By \eqref{cf:pointed-moduli-tangent}, for $(f,a)\in B_z$,
\[
T_{(f,a)}B_z\simeq
\frac{H^0(f^*TY(-[a]))}{df\,H^0(T\PP^1(-[a]))}.
\]
The balanced splitting gives
$f^*TY(-[a])\simeq\OO(1)\oplus\OO^{\oplus2n}$, hence
$\dim B_z=(2+2n)-2=2n$. Let
\[
\mathcal U_z=B_z\times_{\mathcal M}U\longrightarrow B_z
\]
be its universal source curve. Both are smooth quasi-projective
varieties, and $\mathcal U_z\to B_z$ is projective; points of
$\mathcal U_z$ are triples $(f,a,b)$ with $f(a)=z$.

\begin{theorem}[Incidence variety and tangent section]\label{cf:incidence}
There exist a normal projective variety $W_z$ containing
$\mathcal U_z$ as a dense open subset, a surjective generically finite
morphism $e_z:W_z\to Y$, and a section
\[
S_z\in H^0(W_z,e_z^*TY)
\]
with the following properties.
\begin{enumerate}
\renewcommand{\labelenumi}{\textup{(\arabic{enumi})}}
\item\label{cf:incidence-evaluation}
\textnormal{(Evaluation.)} On $\mathcal U_z$, $e_z(f,a,b)=f(b)$, and
$e_z$ is \'etale at $(f,a,b)$ whenever $b\ne a$.
\item\label{cf:incidence-tangency}
\textnormal{(Motion along each source curve.)} For each $(f,a)\in B_z$,
identify the fibre of $\mathcal U_z\to B_z$ over $(f,a)$ with the source
$\PP^1$ of $f$. The restriction of $S_z$ to this fibre is $df(V_{f,a})$,
where $V_{f,a}$ is a vector field on $\PP^1$ with a double zero at $a$
and no other zeros. In particular,
\[
0\ne S_z(f,a,b)\in df_b(T_b\PP^1)\qquad(b\ne a).
\]
\end{enumerate}
Here $b\ne a$ refers to distinct source points; injectivity of $f$ is not assumed.
\end{theorem}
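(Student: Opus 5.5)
The plan is to build $S_z$ first on the open part $\mathcal U_z$ from the two-point tensor, and then to compactify using the closure of $B_z$ in the stable-map space, so that Lemma~\ref{cf:cancellation} controls the nodal boundary.

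\emph{Construction on $\mathcal U_z$.} For $(f,a)\in B_z$ the source is $\PP^1$, and the transversality condition says that $(f^*\theta)\circ df:T\PP^1\to f^*L$ is an isomorphism. Hence the relative contact differential $\vartheta=F^*\theta\circ dF\in H^0(\Omega^1_{U/\mathcal M}\otimes F^*L)$ is nowhere zero on $U^\circ$. Contracting $\mathrm{pr}_a^*\vartheta$ with the two-point tensor $q(a,b)$ gives $\vartheta_a\otimes q(a,b)$, which is a section of $f^*L_a\otimes T_b\PP^1$. Since $f(a)=z$ is fixed along $B_z$, the factor $f^*L_a$ is the constant line $L_z$, so after fixing a nonzero element of $L_z^\vee$ we obtain a relative vector field $V_{f,a}(b)$ in $b$. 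In a coordinate $u$ this field is $c\,(u(b)-u(a))^2\partial_u$ with $c\ne0$. It has a double zero at $a$ and no other zeros: the zero divisor of $q$ is twice the diagonal, and $\vartheta$ has no zeros. Set $S_z=dF(V)$. This is a regular section of $e_z^*TY$ on $\mathcal U_z$ and gives property (2) directly.

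\emph{Compactification.} Let $\overline B_z$ be the closure of $B_z$ in the coarse space of the pointed evaluation fibre $\mathrm{ev}_1^{-1}(z)\subset\overline M_{0,1}(Y,\beta)$. Let $W_z$ be the normalization of the closure of $\mathcal U_z$ in the coarse space of the universal curve over it. Then $W_z$ is normal and projective, and $e_z$ is induced by evaluation.

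We then identify the boundary. By ampleness of $L$ and total degree two, the argument of Corollary~\ref{cf:every-point} shows that a boundary source is either two contact lines meeting at a node, or carries a contracted component. The key point is that a general point of every boundary divisor, or at least every codimension-one point of $W_z$ over the boundary, lies over a family of the type in Lemma~\ref{cf:cancellation}. Specifically, this is a one-parameter family over a reduced base whose fibres are $\PP^1$ or two lines, each of $L$-degree one, where $\mathcal L=F^*L$ has degree two on smooth fibres and degree one on each component. Horizontality of the degree-one components is automatic. Lemma~\ref{cf:cancellation} then says that $\vartheta_a\otimes q$ extends regularly across such points. The section $S_z$ is therefore regular in codimension one on the normal variety $W_z$, and it extends everywhere by Hartogs for the vector bundle $e_z^*TY$. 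For the loci where the marked point $a$ or the node degenerate further, such as contracted components, I would argue that they have codimension at least two in $W_z$, using the dimension bound of Lemma~\ref{cf:two-line-boundary}. Alternatively, I would pass to a smooth Deligne--Mumford atlas and descend.

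\emph{Evaluation and étaleness.} The tangent space of $\mathcal U_z$ at $(f,a,b)$ is $T_{(f,a)}B_z\oplus T_b\PP^1$ modulo reparametrization. Under $de_z$ the first summand maps through the evaluation of $H^0(f^*TY(-[a]))$ at $b$, which is an isomorphism onto $T_{f(b)}Y$ modulo $df(T_b)$ when $b\ne a$, because $f^*TY(-[a]-[b])\simeq\OO\oplus\OO(-1)^{2n}$ has no $H^1$ issues beyond dimension count. Together with $df_b$, this makes $de_z$ an isomorphism between spaces of dimension $2n+1$. Hence $e_z$ is étale there, and surjectivity follows from properness and dominance.

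The main obstacle is the boundary step. One must check that the closure really lies, in codimension one, over families satisfying the hypotheses of Lemma~\ref{cf:cancellation}: a reduced base, smooth locus dense, and only one-node fibres. One must also handle the fixed marked point $a$ landing on a node or on a contracted component. I would first try to show that such degenerations occur in codimension at least two and then rely on normality.
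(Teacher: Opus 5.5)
Your route is viable, but it departs from the paper's exactly at the step you flag as the main obstacle.

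\textbf{What the paper does differently.} The paper keeps both points on the original \emph{unmarked} source. It takes $I_z$ to be the component of $F^{-1}(z)$ in the universal curve $U$ over the unmarked component $\mathcal M$, and works with $\mathcal C_{I_z}=I_z\times_{\mathcal M}U\subset U\times_{\mathcal M}U$. As a result:
\begin{itemize}
\item no bubble is inserted when $b=a$ or when $a$ or $b$ hits the node;
\item every fibre is $\PP^1$ or two lines;
\item Lemma~\ref{cf:cancellation}, applied on an \'etale atlas of $\mathcal M$, gives a regular section on the entire two-point family, including where $a$ sits at the node.
\end{itemize}
So the paper needs no codimension count, no Hartogs step, and no separate treatment of special $z$. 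The price is that $\mathcal C_{I_z}$ is not a stable-map space. The paper must build its coarse space $Q_z$ by Keel--Mori, show $Q_z$ is a projective scheme through the finite map $Q_z\to\overline M\times Y$, and descend the section using $\pi_*\OO=\OO$.

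\textbf{What your route buys and costs.} Taking the closure in $\overline M_{0,2}(Y,\beta)$ gives projectivity for free. At codimension-one points automorphisms are trivial, so you need no descent argument. You pay instead with the boundary analysis. Note also that forgetting markings gives a morphism from your $2$-pointed space to $U\times_{\mathcal M}U$, along which the paper's section can be pulled back; this would let you skip the codimension count. Passing to an atlas of the \emph{pointed} stack would not help by itself, since Lemma~\ref{cf:cancellation} excludes three-component fibres.

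\textbf{Completing the Hartogs version.} The estimate you need does hold, but it must be proved for every $z$, not just general $z$. Lemma~\ref{cf:span} applies the theorem at arbitrary $z_i=f_i(a_i)$.
\begin{itemize}
\item A $1$-pointed map with $a$ on a contracted component has both of its degree-one curves passing through $z$.
\item By the proof of Lemma~\ref{cf:two-line-boundary}, $\dim\Lambda_z\le n$ for every $z$. By bend-and-break, only finitely many $L$-degree-one curves pass through two distinct points. Hence the curves through $z$ form a family of dimension at most $n-1$.
\item So this locus has dimension at most $2n-2$, and at most $2n-1$ in $W_z$, which has dimension $2n+1$. It therefore has codimension at least two.
\item Loci with $b$ at a node, or with $b=a$ on a nodal source, also have codimension at least two.
\item At the generic point of each remaining boundary divisor, $b$ is generic. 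The $2$-pointed map then has trivial automorphisms, and the local family is of the type covered by Lemma~\ref{cf:cancellation}.
\end{itemize}

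\textbf{A boundary type you omitted.} Your description of the boundary leaves out smooth-source limits that are no longer balanced transverse, such as horizontal degree-two maps or double covers of contact lines. These need not lie in codimension at least two, so they must also be checked in codimension one. They are harmless, because $\vartheta$ and $q$ are regular on smooth sources.

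Your \'etaleness argument is equivalent to the paper's kernel computation.
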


\begin{proof}
\textit{Compactification.}
Transversality gives $L\cdot\beta=2$.
Every geometric fibre of $p:U\to\mathcal M$ is either $\PP^1$ or two
projective lines meeting at one node, with $L$-degree one on each
component in the latter case.
Indeed, ampleness and total degree two allow at most two noncontracted
components. A contracted component in an unmarked stable genus-zero
map has valency at least three, forcing at least three noncontracted
leaves in the dual tree. Thus none occurs. The smooth-domain locus is
dense, since it is open and contains $[f_0]$.

Let $I_z$ be the reduced irreducible component of $F^{-1}(z)$ containing
$B_z$. It is a proper integral Deligne--Mumford stack, and $B_z$ is
a dense open subset. Construct the proper Deligne--Mumford stack
\[
\mathcal C_{I_z}=I_z\times_{\mathcal M}U.
\]
It records $(f,a,b)$ on the original unmarked source: $b$ may equal $a$
or be a node. In particular, no contracted component is inserted when
the two points coincide. This family is flat over the integral stack
$I_z$, with geometrically reduced fibres and a geometrically integral
generic fibre, so $\mathcal C_{I_z}$ is integral.
By the Keel--Mori theorem \cite{KM}, $\mathcal C_{I_z}$ admits a coarse
moduli map $\pi:\mathcal C_{I_z}\to Q_z$. Let $W_z\to Q_z$ be the
normalization. At this stage $Q_z$ is a proper
integral algebraic space, and $W_z$ is a normal proper integral
algebraic space finite over $Q_z$. Trivial stabilizers on
$\mathcal U_z$ and its smoothness identify it with a dense open subset
of both $Q_z$ and $W_z$.

The induced morphism
\[
Q_z\longrightarrow\overline M\times Y,\qquad
(f,a,b)\longmapsto([f],f(b)),
\]
is proper with finite geometric fibres: over $([f],y)$, its points are
represented by pairs in $f^{-1}(z)\times f^{-1}(y)$, modulo automorphisms.
These sets are finite because $f$ contracts no component. The morphism
is therefore finite. A finite morphism of algebraic spaces is
representable by schemes \cite[Tag~03ZQ]{Stacks}, so $Q_z$ is a scheme.
Since finite morphisms of schemes are projective \cite[Tag~0B3I]{Stacks}
and $\overline M\times Y$ is projective, $Q_z$ is a projective variety.
Its finite normalization $W_z$ is a normal
projective variety as well.
Second evaluation induces $e_z:W_z\to Y$.

\smallskip
\noindent \textit{Extension and descent.}
On a finite-type reduced \'etale scheme atlas of $\mathcal M$, the
universal domain is a scheme, proper, flat and nodal over the atlas,
with smooth fibres over a dense open subset of every component.
Thus Lemma~\ref{cf:cancellation}
applies to $\mathcal L=F^*L$ and the section
$\vartheta\in H^0(U,\Omega^1_{U/\mathcal M}\otimes F^*L)$.
Here we view $\theta$ as a section of $\Omega_Y^1\otimes L$,
pull it back by $F$, and apply the relative differential
$F^*\Omega_Y^1\to\Omega^1_{U/\mathcal M}$ to obtain $\vartheta$.
The lemma gives a regular section
\[
\widetilde V\in H^0\bigl(U\times_{\mathcal M}U,
\mathrm{pr}_a^*F^*L\otimes\mathrm{pr}_b^*\omega_{U/\mathcal M}^{-1}\bigr).
\]
Uniqueness makes the construction compatible with isomorphisms of
stable maps, so the sections descend from the atlas. Restrict to
$\mathcal C_{I_z}$ and choose a nonzero element of $L_z$ to trivialize
the first factor. This gives
\[
V_z\in H^0(\mathcal C_{I_z},\omega_{\mathcal C_{I_z}/I_z}^{-1}).
\]
For second evaluation $e_b:\mathcal C_{I_z}\to Y$, the composite
\[
e_b^*\Omega_Y^1\longrightarrow\Omega^1_{\mathcal C_{I_z}/I_z}
\xrightarrow{\rho}\omega_{\mathcal C_{I_z}/I_z}
\xrightarrow{V_z}\OO_{\mathcal C_{I_z}}
\]
defines a section of $e_b^*TY$. Write $e_b=\bar e_b\circ\pi$, where
$\bar e_b:Q_z\to Y$. The bundle $e_b^*TY=\pi^*\bar e_b^*TY$ is
pulled back from the coarse space, so stabilizers act trivially on
its fibres. The invariant-function property of the geometric quotient
\cite[Theorem~1.1 and Definition~1.8(F)]{KM} gives
$\pi_*\OO_{\mathcal C_{I_z}}=\OO_{Q_z}$.
On open subsets of $Q_z$ trivializing $\bar e_b^*TY$, this identity
applies in each coordinate, yielding the projection formula
$\pi_*(e_b^*TY)\simeq\bar e_b^*TY$. The section therefore descends to
$Q_z$; pulling back to $W_z$ gives $S_z$.
On the source of $(f,a)\in B_z$, choose $u(a)=0$. The two-point formula
gives $V_{f,a}=c\,u^2\partial_u$ with $c\ne0$, since $(f^*\theta)\circ df$ is
an isomorphism. Hence $S_z(f,a,b)=df_b(V_{f,a}(b))$, which is nonzero
for $b\ne a$ because $f$ is unramified.

\smallskip
\noindent \textit{Evaluation.}
Both $\mathcal U_z$ and $Y$ are smooth of dimension $2n+1$.
For $b\ne a$, applying \eqref{cf:pointed-moduli-tangent} to two-point
evaluation gives
\[
\ker(de_z)_{(f,a,b)}
\simeq
\frac{H^0(f^*TY(-[a]-[b]))}{df\,H^0(T\PP^1(-[a]-[b]))}=0.
\]
Indeed, $f^*TY(-[a]-[b])\simeq\OO\oplus\OO(-1)^{\oplus2n}$, and $df$
identifies its constant summand with $T\PP^1(-[a]-[b])\simeq\OO$.
Thus $(de_z)_{(f,a,b)}$ is injective, hence an isomorphism between
tangent spaces of smooth varieties of the same dimension. It follows
that $e_z$ is \'etale there.

Since $W_z$ is projective, the image of $e_z$ is closed. It contains
a nonempty open subset by \'etaleness, so it equals the irreducible
variety $Y$. The equality $\dim W_z=\dim Y$ then gives generic
finiteness.
\end{proof}

\subsection{The VMRT and contact homogeneity}\label{cf:sec-homogeneity}

Keep the assumptions on $Y$ from Subsection~\ref{cf:sec-incidence},
and assume in addition that $\Pic(Y)=\ZZ[L]$.

The incidence sections give local vector fields whose values span
each tangent space, as follows.

\begin{lemma}\label{cf:span}
For every $y\in Y$, there are balanced transverse maps
$f_1,\ldots,f_{2n+1}$, with source points
$b_i$ mapping to $y$, whose tangent lines span $T_yY$.
For any $a_i\ne b_i$, the construction of Theorem~\ref{cf:incidence},
based at $(f_i,a_i)$, gives a pointed \'etale neighbourhood
$(E_i,q_i)\to(Y,y)$, with $q_i=(f_i,a_i,b_i)$, and a vector field
$v_i$ on $E_i$ whose value, under $T_{q_i}E_i\simeq T_yY$, is nonzero
and lies in $df_{i,b_i}(T_{b_i}\PP^1)$. These values span $T_yY$.
\end{lemma}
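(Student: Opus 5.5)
The plan is to combine Corollary~\ref{cf:every-point} with Theorem~\ref{cf:incidence} almost directly. First, for the given $y$, Corollary~\ref{cf:every-point} supplies balanced transverse maps $f_1,\ldots,f_{2n+1}$ through $y$ whose tangent lines at $y$ span $T_yY$. Concretely, each $f_i$ comes with a source point (after reparametrization, $0$) mapping to $y$; call it $b_i$. The corollary was stated with $f_i(0)=y$ and the derivative vectors at $0$ spanning, so the lines $df_{i,b_i}(T_{b_i}\PP^1)$ span $T_yY$. Now fix any $a_i\ne b_i$ in the source and put $z_i=f_i(a_i)$. We then apply Theorem~\ref{cf:incidence} with base data $(f_0,a_0)=(f_i,a_i)$ and $z=z_i$. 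This yields $W_{z_i}$, the morphism $e_{z_i}$ and the section $S_{z_i}$, and the point $q_i=(f_i,a_i,b_i)\in\mathcal U_{z_i}\subset W_{z_i}$ satisfies $e_{z_i}(q_i)=f_i(b_i)=y$.

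Next, by Theorem~\ref{cf:incidence}\textup{(\ref{cf:incidence-evaluation})}, $e_{z_i}$ is \'etale at $q_i$ since $b_i\ne a_i$. We choose a Zariski open neighbourhood $E_i\subset\mathcal U_{z_i}$ of $q_i$ on which $e_{z_i}$ is \'etale; this uses that the \'etale locus is open, and $\mathcal U_{z_i}$ is smooth. Then $(E_i,q_i)\to(Y,y)$ is a pointed \'etale neighbourhood. On $E_i$ we define $v_i=(de_{z_i})^{-1}(S_{z_i}|_{E_i})$, exactly as in Proposition~\ref{cf:jets}. This is a regular vector field because $de_{z_i}:TE_i\to e_{z_i}^*TY$ is an isomorphism there. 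Under $T_{q_i}E_i\simeq T_yY$, its value is $S_{z_i}(f_i,a_i,b_i)$. By Theorem~\ref{cf:incidence}\textup{(\ref{cf:incidence-tangency})}, this value is nonzero and lies in $df_{i,b_i}(T_{b_i}\PP^1)$.

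Finally, each value is a nonzero vector on the line $df_{i,b_i}(T_{b_i}\PP^1)$, so it spans that line. The lines span $T_yY$ by the choice of the $f_i$, so the values span $T_yY$.

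The only point needing care, and the main obstacle as I see it, is a bookkeeping issue. Theorem~\ref{cf:incidence} is formulated for a base point $z$ and a component $B_z$ through $(f_0,a_0)$, and here the $z_i$ differ for different $i$. This is harmless: the lemma asks only for separate \'etale neighbourhoods, so each construction is carried out independently. We must also check that the identification of the fibre of $\mathcal U_{z_i}\to B_{z_i}$ with the source of $f_i$ sends $b_i$ to the point $q_i$. This is built into the description of points of $\mathcal U_z$ as triples $(f,a,b)$. Nonvanishing uses unramifiedness of balanced maps, which was already established after Definition~\ref{cf:balanced-definition}.
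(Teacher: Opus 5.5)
Your proposal is correct and follows the paper's proof essentially step for step. You choose the maps from Corollary~\ref{cf:every-point} with $b_i=0$, apply Theorem~\ref{cf:incidence} at $(f_i,a_i)$, restrict to an open \'etale neighbourhood of $q_i$, set $v_i=(de_i)^{-1}(S_i|_{E_i})$, and use part~\textup{(\ref{cf:incidence-tangency})} to get a nonzero value on the spanning tangent line. Taking $E_i$ inside $\mathcal U_{z_i}$ rather than inside $W_{z_i}$ is an inessential difference.
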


\begin{proof}
Choose the maps $f_i$ with $b_i=0$ by Corollary~\ref{cf:every-point}.
Put $z_i=f_i(a_i)$ and apply Theorem~\ref{cf:incidence} to obtain
$e_i:W_i\to Y$ and $S_i\in H^0(W_i,e_i^*TY)$. By
\textup{(\ref{cf:incidence-evaluation})}, $e_i$ is \'etale on an open
neighbourhood $E_i\subset W_i$ of $q_i$. Hence
$v_i=(de_i)^{-1}(S_i|_{E_i})$ is a vector field on $E_i$, and
\textup{(\ref{cf:incidence-tangency})} gives
\[
de_i\bigl(v_i(q_i)\bigr)=S_i(q_i)
\in df_{i,b_i}(T_{b_i}\PP^1)\setminus\{0\}.
\]
The chosen tangent lines span $T_yY$, so these values do as well.
\end{proof}

Recall from Subsection~\ref{cf:sec-lines} the chosen contact-line
component $\mathcal K$ and its VMRT $Z_y$. Apply the construction of
Subsection~\ref{cf:sec-general-tools} to this $\mathcal K$, and write
$p:\mathcal V\to Y$ for the resulting integral projective
tangent-direction space. Theorem~\ref{cf:vmrt-facts} supplies the
general-point properties of $Z_y$ used below. We will first identify
the general scheme-theoretic fibre of $p$ with $Z_y$, then prove
smoothness of $p$ and irreducibility of its fibres.

\begin{proposition}\label{cf:nonlinear}
If every point of $Y$ lies on a balanced transverse map,
then $p:\mathcal V\to Y$ is smooth with irreducible fibres.
For general $y$ its scheme-theoretic fibre is $Z_y$, which
is positive-dimensional and nonlinear.
\end{proposition}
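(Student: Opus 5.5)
Smoothness comes first. We apply Lemma~\ref{cf:smoothness} to the proper dominant morphism $p:\mathcal V\to Y$. Over a nonempty open subset, $p$ is smooth: for general $y$ the fibre $p^{-1}(y)$ should coincide scheme-theoretically with $Z_y$, which is smooth by Theorem~\ref{cf:vmrt-facts}\textup{(\ref{cf:vmrt-geometry})}. By generic smoothness in characteristic zero, $p$ is smooth over a dense open set, and its reduced general fibres are then the closures of tangent directions, namely $Z_y$. This identification uses the remark on $\tau_x$: the tangent map is finite and birational onto $Z_y$, and $Z_y$ is smooth, hence normal.

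For the vector-field hypotheses, fix $y$ and take $(E_i,q_i)$ and $v_i$ from Lemma~\ref{cf:span}. Each $v_i$ comes from $S_i\in H^0(W_i,e_i^*TY)$ with $e_i$ surjective and generically finite. Proposition~\ref{cf:jets} then shows that $\widehat v_i$ is tangent to $\mathcal V_{E_i}$, so it restricts to a vector field on $\mathcal V\times_YE_i$ compatible with the projection. By Lemma~\ref{cf:span}, the values $v_i(q_i)$ span $T_yY$. Lemma~\ref{cf:smoothness} therefore gives smoothness of $p$ everywhere, so every fibre is smooth of dimension $n-1$ and $\mathcal V$ is smooth.

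Next, irreducibility. Take the Stein factorization $\mathcal V\to Y'\to Y$. Because $p$ is smooth and proper, $Y'\to Y$ is finite étale, and the geometric fibres of $p$ are disjoint unions of its connected components indexed by the points of $Y'$. A Fano manifold is simply connected: it is rationally connected, and this also follows from $\chi(\OO)=1$ applied to étale covers. Hence $Y'\simeq Y$ and every fibre is connected. A smooth connected fibre is irreducible. This is the step most likely to need care, since we must check that $\mathcal V$ is integral and that $Y'$ is normal and connected; both follow because $\mathcal V$ is integral and smooth.

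Finally, positivity of dimension and nonlinearity. Since $\dim Z_y=n-1\ge1$ for $n\ge2$, the fibre is positive-dimensional. If $Z_y$ were linear, then being irreducible it would be a linear subspace $\PP^{n-1}$. By Theorem~\ref{cf:vmrt-facts}\textup{(\ref{cf:vmrt-span})}, however, it spans $\PP(D_y)=\PP^{2n-1}$, and $n-1<2n-1$, a contradiction. Alternatively, a linear subspace is never Legendrian-spanning in this way. So $Z_y$ is nonlinear.

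The main obstacle is making rigorous the identification of the scheme-theoretic general fibre with the reduced $Z_y$. The plan is to use generic smoothness, which makes the general fibre reduced, together with density of tangent directions of free members, which gives equality of supports.
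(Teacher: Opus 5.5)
You have the same overall structure as the paper's proof: Lemma~\ref{cf:span} with Proposition~\ref{cf:jets} feeding Lemma~\ref{cf:smoothness}, then Stein factorization with triviality of connected finite \'etale covers of a Fano manifold, then nonlinearity from $\dim Z_y=n-1$ against the span $\PP(D_y)$. Those steps are fine. The gap is in the base case that Lemma~\ref{cf:smoothness} requires, namely smoothness of $p$ over some nonempty open subset of $Y$.

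\emph{Generic smoothness.} You obtain this base case from generic smoothness in characteristic zero. That theorem needs a nonsingular source, and at this stage $\mathcal V$ is only known to be an integral projective variety. Smoothness of $\mathcal V$ is exactly what Lemma~\ref{cf:smoothness} is supposed to deliver, so the appeal is circular. For instance, $Y\times\Gamma\to Y$ with $\Gamma$ a cuspidal curve is integral and dominant, yet every fibre is singular. For the same reason, your plan to get reducedness of the general fibre ``from generic smoothness'' does not work.

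\emph{Equality of supports.} Density of tangent directions of free members in $\mathcal V$ does not stop the fibre of the closure over $y$ from containing limit directions coming from nearby points. Also, the remark on $\tau_x$ identifies $e^{-1}(x)$ with $Z_x$; it does not identify $p^{-1}(x)$ with $Z_x$.

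\emph{The repair, which is what the paper does.}
\begin{enumerate}
\item The generic fibre of $p$ is integral because $\mathcal V$ is, hence geometrically reduced in characteristic zero.
\item Generic flatness and openness of geometric reducedness give an open $U\subset Y$ over which $p$ is flat with reduced fibres.
\item Shrink $U$ using Theorem~\ref{cf:vmrt-facts}\textup{(\ref{cf:vmrt-lines})} so that every member of $\mathcal K$ meeting $U$ is smooth. The tangent map is then defined on the whole normalized universal family over $U$ and is proper.
\item Its image is therefore closed in $\PP(TY)|_U$, hence equals $\mathcal V|_U$, and its fibre over $y$ is exactly $Z_y$.
\item Reducedness upgrades this to a scheme-theoretic equality $p^{-1}(y)=Z_y$.
\item Flatness together with smoothness of $Z_y$ (Theorem~\ref{cf:vmrt-facts}\textup{(\ref{cf:vmrt-geometry})}) gives smoothness of $p$ over $U$.
\end{enumerate}
With this base case in place, the rest of your argument goes through as written.
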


\begin{proof}
\emph{The general fibre.}
The generic fibre of $p$ is integral, hence geometrically reduced in
characteristic zero. Generic flatness and openness of geometric
reducedness give a nonempty open $U\subset Y$ over which $p$ is flat
with reduced fibres. By Theorem~\ref{cf:vmrt-facts}\textup{(\ref{cf:vmrt-lines})},
shrink $U$ so that all members of $\mathcal K$ meeting $U$ are smooth.
The tangent map from its normalized universal
family over $U$ is then a proper morphism with reduced image
$\mathcal V|_U$. Thus the fibres of $p$ over $U$ are precisely the
VMRTs $Z_y$, including their reduced scheme structures. By
Theorem~\ref{cf:vmrt-facts}\textup{(\ref{cf:vmrt-geometry})}, after
shrinking $U$ these fibres are smooth, so $p$ is smooth over $U$.

\smallskip
\noindent \emph{Smoothness at every point.}
At any $y\in Y$, Lemma~\ref{cf:span} supplies \'etale local vector
fields with spanning values. Proposition~\ref{cf:jets} lifts each
field to all of the corresponding base change of $\mathcal V$.
Lemma~\ref{cf:smoothness} therefore makes $p$ smooth everywhere.

\smallskip
\noindent \emph{Irreducibility and nonlinearity.}
In the Stein factorization $\mathcal V\to Y'\xrightarrow{q}Y$, the finite
map $q$ is \'etale because $p$ is smooth and projective
\cite[Lemma~2.4]{FujinoGongyo}. The Fano condition forces this connected
finite \'etale cover to be trivial, as the following calculation shows.
The variety $Y'$ is smooth projective Fano, with
$K_{Y'}=q^*K_Y$ and $TY'=q^*TY$.
Kodaira vanishing \cite{Kodaira} gives
$\chi(\OO_Y)=\chi(\OO_{Y'})=1$, while
Hirzebruch--Riemann--Roch \cite{HRR} yields
\[
\chi(\OO_{Y'})=\deg(q)\,\chi(\OO_Y).
\]
Hence $\deg(q)=1$, so the smooth fibres of $p$ are
connected, hence irreducible. Finally,
Theorem~\ref{cf:vmrt-facts}\textup{(\ref{cf:vmrt-geometry})--(\ref{cf:vmrt-span})}
gives $\dim Z_y=n-1>0$, whereas its span $\PP(D_y)$ has dimension
$2n-1$. Thus the
irreducible general VMRT is not linear.
\end{proof}

The required descent result allows singular projective sources, as
needed for the incidence compactifications.

\begin{theorem}[{Hwang, \cite[Theorem~1.3]{HwangDescent}}]
\label{cf:descent}
Let $X$ be a smooth complex Fano variety of Picard number one with
positive-dimensional nonlinear general VMRT for a minimal rational
component. For every projective variety $W$ and surjective morphism
$e:W\to X$, pullback induces an isomorphism
\[
H^0(X,TX)\xrightarrow{\sim}H^0(W,e^*TX).
\]
Here nonlinear means that some irreducible component of the general
VMRT is not a projective linear subspace.
\end{theorem}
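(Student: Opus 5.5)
Since Theorem~\ref{cf:descent} is quoted from \cite{HwangDescent}, I only outline how I would prove it, following the pattern already visible in Proposition~\ref{cf:jets} and Lemma~\ref{cf:smoothness}. \emph{Injectivity} is immediate: $e$ is surjective, so a vector field on $X$ whose pullback vanishes vanishes on a dense set, hence everywhere. For \emph{surjectivity}, I would first reduce to $e$ generically finite. Cutting $W$ by general hyperplane sections gives a subvariety $W'\subset W$ with $e|_{W'}$ still surjective and generically finite; restricting a section of $e^*TX$ to $W'$ loses nothing once injectivity is known on both sides. After normalizing, I may assume $W$ is normal and projective and $e$ is surjective and generically finite of degree $d$.

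Fix $S\in H^0(W,e^*TX)$. Over a dense open $X^\circ\subset X$, over which $e$ is finite \'etale, each sheet $E_j$ of $e^{-1}(X^\circ)$ gives a local vector field $v_j=(de)^{-1}(S|_{E_j})$ near a point $x\in X^\circ$. This gives up to $d$ germs $v_1,\dots,v_d$ at $x$. Exactly as in Proposition~\ref{cf:jets}, which used only the projective source and Lemma~\ref{cf:firstjet}, each $\widehat v_j$ is tangent to the total VMRT space $\mathcal V\subset\PP(TX)$. So each $v_j$ is a local infinitesimal automorphism of the VMRT structure.

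The key step is an \emph{extension principle}: when the general VMRT is positive-dimensional and nonlinear, a germ of a VMRT-preserving vector field at a general point extends to a global vector field on $X$. I would prove this Cartan--Fubini style. Along a general minimal rational curve $C$ through $x$, preservation of $\mathcal V$ forces the field's restriction to $C$ to satisfy a linear differential system. This system is determined by the second fundamental form of the VMRT, which is nonzero precisely because $Z_x$ is nonlinear. Its solutions are governed by finitely many jets at $x$, so the germ continues analytically along chains of minimal rational curves. Picard number one makes such chains connect general points, and the rigidity of the system makes the continuation single-valued. Hartogs extension over codimension $\ge2$ then gives a global field on $X$. \textbf{This is the main obstacle}, especially proving single-valuedness: one must rule out monodromy of the continued germ. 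I would try to do this by showing the prolongation of the infinitesimal automorphism algebra of the affine cone $\widehat Z_x$ is finite-dimensional and acts trivially on continuation data. Positive-dimensional nonlinearity is exactly what makes this prolongation finite; for linear VMRTs the statement fails.

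Granting extension, each $v_j$ is the restriction of a global field $\xi_j\in H^0(X,TX)$. The function $w\mapsto S(w)-e^*\xi_{j}(w)$ vanishes on the sheet $E_j$, hence on the irreducible $W$. Therefore $S=e^*\xi_1$ and all the $\xi_j$ coincide, which proves surjectivity.
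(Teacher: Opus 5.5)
The paper gives no proof of this statement; it imports it from Hwang. Judged on its own, your outline has a genuine gap at exactly the step you flag. The peripheral parts are fine: injectivity, the use of Proposition~\ref{cf:jets} to show each $\widehat v_j$ is tangent to $\mathcal V$, and the final deduction that $S=e^*\xi_j$ because $S-e^*\xi_j$ vanishes on a sheet.

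The reduction needs one repair. What you need is injectivity of the restriction $H^0(W,e^*TX)\to H^0(W',e^*TX|_{W'})$, and this does not follow from injectivity of the two pullbacks from $H^0(X,TX)$. It is simpler to use the Stein factorization $W\to W''\xrightarrow{g}X$: the projection formula gives $H^0(W,e^*TX)=H^0(W'',g^*TX)$ with $g$ finite.

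The real problem is the extension principle. It is a Cartan--Fubini-type theorem for local infinitesimal automorphisms of the VMRT structure. Together with your reduction and Proposition~\ref{cf:jets}, it immediately implies the statement. So the outline moves the whole difficulty into an unproved claim at least as strong as the theorem itself.

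Moreover, the mechanism you propose for that claim fails as stated. Nonlinearity of $Z_x$ does not make the prolongation of $\mathfrak{aut}(\widehat Z_x)$ finite when $Z_x$ is degenerate, and the contact case of this paper is such a case. There $Z_y$ spans only the hyperplane $\PP(D_y)$, so every endomorphism of $T_yY$ vanishing on $D_y$ preserves $\widehat Z_y$. For a covector $\lambda$ with kernel $D_y$ and any $c\in T_yY$, the tensor $\lambda^{k+1}\otimes c$ then lies in the $k$-th prolongation for every $k$. Finite-jet determination therefore cannot come from the pointwise cone alone. It needs additional structure, such as the bracket $\wedge^2D\to L$ in a Tanaka-type prolongation, which a linear system along a single curve does not see.

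The continuation step also has unaddressed points:
\begin{itemize}
\item A germ at $x$ gives no a priori control on a neighbourhood of a whole curve $C$.
\item You would need to show that VMRT-preserving fields carry germs of minimal rational curves to germs of minimal rational curves. This is a nontrivial step in the Hwang--Mok theory, where nonlinearity-type hypotheses on the VMRT are used.
\item The monodromy of continuation along chains is left open.
\end{itemize}
To turn the outline into a proof, you would have to invoke an established extension theorem of this kind and check that its hypotheses match the stated ones, including possibly reducible VMRTs with only one nonlinear component. The heuristic sketch does not suffice.
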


The following consequence gives a general homogeneity criterion,
without assuming a contact structure.

\begin{proposition}[A criterion for homogeneity]\label{cf:generation}
Let $X$ be a connected smooth complex projective Fano variety of
Picard number one with positive-dimensional nonlinear general VMRT
for a minimal rational component. Fix $x\in X$. Suppose there are
finitely many integral projective varieties $W_i$, surjective morphisms
$e_i:W_i\to X$, sections $S_i\in H^0(W_i,e_i^*TX)$, and points
$w_i\in e_i^{-1}(x)$ such that the values
\[
S_i(w_i)\in(e_i^*TX)_{w_i}=T_xX
\]
span $T_xX$. Then $H^0(X,TX)\to T_xX$ is surjective, and the
$\Aut(X)^0$-orbit through $x$ is open. If this condition holds at
every $x\in X$, then $TX$ is globally generated and $\Aut(X)^0$
acts transitively on $X$.
\end{proposition}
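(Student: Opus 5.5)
The plan is to apply Theorem~\ref{cf:descent} to each pair $(W_i,e_i)$ and then pass from global vector fields to orbits of the automorphism group.

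\emph{Descending the sections.} By Theorem~\ref{cf:descent}, each $S_i$ equals $e_i^*\xi_i$ for a unique $\xi_i\in H^0(X,TX)$. Evaluating at $w_i$ gives
\[
S_i(w_i)=\xi_i\bigl(e_i(w_i)\bigr)=\xi_i(x)
\]
under the identification $(e_i^*TX)_{w_i}=T_xX$. Since these values span $T_xX$, the evaluation map $H^0(X,TX)\to T_xX$ is surjective. Note that the descent theorem needs no smoothness of $W_i$: only projectivity and surjectivity are used. This is exactly why the singular incidence compactifications are admissible.

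\emph{Open orbit.} The group $G=\Aut(X)^0$ is a connected algebraic group, because $X$ is projective. Its Lie algebra is $H^0(X,TX)$. The orbit map $G\to X$, $g\mapsto gx$, has differential at the identity equal to the evaluation $H^0(X,TX)\to T_xX$, up to sign. By the previous step this differential is surjective, so the orbit map is smooth at the identity. By homogeneity it is then smooth everywhere, so the orbit $Gx$ is open in $X$.

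\emph{Transitivity.} Now assume the spanning condition holds at every point. Then every orbit is open. Since $X$ is connected, it is a disjoint union of open orbits, so there is only one orbit. Global generation of $TX$ is exactly surjectivity of evaluation at every point.

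I do not expect a serious obstacle. The one point needing care is the identification of $\operatorname{Lie}\Aut(X)$ with $H^0(X,TX)$ together with the derivative of the orbit map. This is standard: $\Aut(X)$ is an open subscheme of the Hom scheme, with tangent space $H^0(X,TX)$ at the identity, and the derivative of the orbit map is evaluation at $x$.
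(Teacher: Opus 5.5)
Your proposal is correct and follows the paper's proof step for step. You descend each $S_i$ via Theorem~\ref{cf:descent}, identify evaluation at $x$ with the differential at the identity of the orbit map $\Aut(X)^0\to X$ to get an open orbit, and conclude transitivity from connectedness of $X$. The only cosmetic difference is that you get openness of $\Aut(X)^0\cdot x$ from equivariant smoothness of the orbit map everywhere, whereas the paper translates an open neighbourhood of $x$ by the group; the two are equivalent.
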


\begin{proof}
By Theorem~\ref{cf:descent}, each $S_i$ is the pullback of a global
vector field $H_i\in H^0(X,TX)$. Thus
\[
H_i(x)=S_i(w_i),
\]
so evaluation at $x$ is surjective. Put $G=\Aut(X)^0$. The standard
identification $\operatorname{Lie}G=H^0(X,TX)$ identifies evaluation
at $x$ with the differential at the identity of the orbit map
$G\to X$, $g\mapsto g(x)$
\cite[Theorem~2.1 and Corollary~2.2]{BrionSpherical}.
Surjectivity makes this map smooth at the identity, so its image
contains an open neighbourhood of $x$. The $G$-translates of this
neighbourhood cover the orbit $G\cdot x$, proving that it is open.
If the condition holds at every point, evaluation is everywhere surjective,
so $TX$ is globally generated and every orbit is open. Connectedness
of $X$ then leaves only one orbit.
\end{proof}

The same descent gives embeddedness before the homogeneity conclusion.
For any chosen source point, the incidence section vanishes there and
nowhere else on that source. A section pulled back from $Y$ cannot
have this property if another source point has the same image.

\begin{proposition}[Embeddedness of balanced transverse maps]\label{cf:bt-embedding}
Let $(Y,D)$ be a smooth complex projective contact Fano manifold of
dimension $2n+1$, with $n\ge2$ and $\Pic(Y)=\ZZ[L]$.
Every balanced transverse map $f:\PP^1\to Y$ is a closed
immersion, with $N_{f(\PP^1)/Y}\simeq\OO(1)^{\oplus2n}$.
\end{proposition}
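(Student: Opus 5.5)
The plan follows the heuristic stated just before the proposition: incidence sections vanish at exactly one source point, while sections pulled back from $Y$ are constant on fibres of $f$.

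\emph{Setting up descent.} Since every point lies on a balanced transverse map (Corollary~\ref{cf:every-point}), Proposition~\ref{cf:nonlinear} gives a smooth VMRT projection with irreducible, positive-dimensional, nonlinear general fibre. Thus $Y$ satisfies the hypotheses of Theorem~\ref{cf:descent}, with $\mathcal K$ as minimal rational component and Picard number one.

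\emph{Injectivity.} Let $f$ be balanced transverse, and suppose $f(b)=f(b')$ with $b\ne b'$. Apply Theorem~\ref{cf:incidence} with base point $(f,a_0)$, taking $a_0=b$ and $z=f(b)$; note $(f,b)\in B_z$ trivially. This produces $W_z$, $e_z$ and $S_z$. By Theorem~\ref{cf:descent}, $S_z=e_z^*H$ for a global vector field $H$ on $Y$.

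Restrict to the fibre of $\mathcal U_z\to B_z$ over $(f,b)$, identified with the source $\PP^1$. There $e_z=f$, so $S_z|_{\PP^1}=f^*H$. By Theorem~\ref{cf:incidence}(\ref{cf:incidence-tangency}), this restriction equals $df(V_{f,b})$, where $V_{f,b}$ vanishes only at $b$. Hence
\[
0=S_z(f,b,b)=H(f(b)),
\]
while
\[
S_z(f,b,b')=H(f(b'))=H(f(b))=0
\]
contradicts $S_z(f,b,b')\ne0$. Therefore $f$ is injective on points.

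\emph{Closed immersion and normal bundle.} A balanced map is unramified, as observed after Definition~\ref{cf:balanced-definition}. An injective unramified proper morphism from $\PP^1$ is a closed immersion, since it is injective on points and tangent spaces and proper. The normal bundle is $\operatorname{coker}(df)\simeq\OO(1)^{\oplus 2n}$, again by the remark following Definition~\ref{cf:balanced-definition}.

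\emph{Main obstacle.} The delicate step is the identification $S_z|_{\text{fibre}}=f^*H$ at the two points $b$ and $b'$. This needs the fibre of $\mathcal U_z$ over $(f,b)$ to sit inside $W_z$ with $e_z$ restricting to $f$ on the whole source, including the point $b$ itself. That holds because $\mathcal U_z\subset W_z$ is open and its fibres over $B_z$ are full copies of $\PP^1$, including the point $a=b$. One should also check that the equality of values for a pulled-back section uses only $e_z(f,b,b)=e_z(f,b,b')$, which is immediate from the evaluation formula in Theorem~\ref{cf:incidence}(\ref{cf:incidence-evaluation}).
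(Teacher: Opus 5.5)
Your proposal is correct and follows essentially the paper's own proof. The paper likewise descends the incidence section $S_z$ based at $(f,a)$ to a global field $H_a$ vanishing at $f(a)$ but nonzero at $f(b)$ for every $b\ne a$, which gives injectivity. It then upgrades injective plus unramified plus proper to a closed immersion, spelling out the step you cite as standard via finiteness and Nakayama's lemma.
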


\begin{proof}
For the minimal rational component $\mathcal K$ fixed in
Subsection~\ref{cf:sec-lines}, Corollary~\ref{cf:every-point} and
Proposition~\ref{cf:nonlinear} give a positive-dimensional nonlinear
general VMRT. Fix any such map $f$ and any $a\in\PP^1$, and put $z=f(a)$.
Theorem~\ref{cf:incidence}, applied with $(f_0,a_0)=(f,a)$, supplies a
normal projective variety $W_z$, a surjective morphism $e_z:W_z\to Y$,
and the section $S_z$. Since $Y$ is Fano of Picard number one,
Theorem~\ref{cf:descent} applies and gives a global vector field
$H_a\in H^0(Y,TY)$ with $S_z=e_z^*H_a$.

The whole source of $f$, including $a$, is a fibre of
$\mathcal U_z\to B_z$ contained in $W_z$.
By Theorem~\ref{cf:incidence}\textup{(\ref{cf:incidence-tangency})},
restriction to this fibre gives
\[
H_a(f(a))=0,\qquad
H_a(f(b))=df_b\bigl(V_{f,a}(b)\bigr)\ne0\quad(b\ne a).
\]
Thus $f(b)\ne f(a)$ for every $b\ne a$. Since $a$ was arbitrary,
$f$ is injective.

The morphism $f$ is proper and quasi-finite, hence finite. Since it
is unramified and injective, every nonempty closed fibre is a single
reduced point. Nakayama's lemma makes
$\OO_Y\to f_*\OO_{\PP^1}$ surjective, so $f$ is a closed immersion.
The balanced splitting gives the asserted normal bundle.
\end{proof}

\begin{proof}[Proof of Theorem~\ref{cf:main}]
\phantomsection\label{cf:proof-main}
In dimension one, $Y\simeq\PP^1$, $D=0$, and $L\simeq\OO_{\PP^1}(2)$,
so the assertions are immediate. In higher dimensions,
Lemma~\ref{cf:reduction} gives contact homogeneity for the two classical models.
In the remaining case, Proposition~\ref{cf:first-conic} constructs one
very free map of contact degree two, and Corollary~\ref{cf:every-point}
gives balanced transverse maps through every point.
Proposition~\ref{cf:nonlinear} then supplies a positive-dimensional
nonlinear general VMRT.

For each $y\in Y$, Lemma~\ref{cf:span} and
Theorem~\ref{cf:incidence} provide normal projective varieties $W_i$,
surjective maps $e_i:W_i\to Y$, sections
$S_i\in H^0(W_i,e_i^*TY)$, and points $q_i\in e_i^{-1}(y)$ whose
values $S_i(q_i)$ span $T_yY$. Proposition~\ref{cf:generation}
therefore makes $TY$ globally generated and the action of
$\Aut(Y)^0$ transitive.

Under $\Pic(Y)=\ZZ[L]$, the contact structure is unique
\cite[Corollary~4.5]{Kebekus}; the projective-space and projectivized
cotangent-bundle exceptions do not satisfy this hypothesis.
Thus $\Aut(Y)^0$ preserves $D$, giving the required transitive contact
action. On the two classical models, global generation of $TY$
also follows from their transitive contact actions.

In all cases, $G=\Aut(Y,D)^0$ acts transitively on $Y$.
Its action on a high anticanonical embedding identifies $G$ with a
closed subgroup of a projective linear group, so $G$ is a connected
affine algebraic group. By Borel's fixed-point theorem, a Borel
subgroup fixes a point. The connected solvable radical lies in this
Borel and, by normality and transitivity, fixes every point of $Y$.
Effectiveness makes it trivial. Thus $G$ is semisimple and $Y=G/P$
for a parabolic subgroup $P$. Every ample line bundle on $G/P$ is
very ample \cite[Exercise~3.1.E(1)]{BrionKumar}, applied after passing
to the simply connected cover of $G$. Hence the ample contact line
bundle $L$ is very ample.
\end{proof}

The very ampleness of $L$ now gives a second algebraic embeddedness
argument, which also applies to maps that are not transverse.

\begin{proposition}\label{cf:degree-two-images}
Let $(Y,D)$ be a smooth complex projective contact Fano manifold of
dimension $2n+1$, with contact line bundle $L$.
For every morphism $f:\PP^1\to Y$ with $\deg f^*L=2$, the reduced
image $C=f(\PP^1)_{\mathrm{red}}$ is a smooth rational curve,
identified by the $L$-embedding with a line or a smooth plane conic.
Consequently every unramified such $f$ is a closed immersion.
In particular, every balanced map is a closed immersion, with
$N_{C/Y}\simeq\OO(1)^{\oplus2n}$.
\end{proposition}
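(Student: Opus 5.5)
We use Theorem~\ref{cf:main}: $L$ is very ample, so fix the embedding $\phi:Y\hookrightarrow\PP^N$ given by $|L|$. Then $\phi^*\OO(1)=L$, and for any curve $\Gamma\subset Y$ the $L$-degree equals the projective degree of $\phi(\Gamma)$.

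Let $C=f(\PP^1)_{\mathrm{red}}$ and let $d$ be the degree of $f$ onto $C$. Then $2=\deg f^*L=d\cdot\deg_L C$, and ampleness of $L$ gives $\deg_L C\ge1$. So either $d=1$ and $\deg_L C=2$, or $d=2$ and $\deg_L C=1$. An irreducible reduced curve of degree one in $\PP^N$ is a line. An irreducible reduced curve of degree two spans a plane, since otherwise a general hyperplane through enough of its points would meet it in more than two points; inside that plane it is an integral conic, hence smooth. In both cases $C$ is a smooth rational curve, identified by $\phi$ with a line or a smooth plane conic.

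Now suppose $f$ is unramified. In the case $d=2$, the map $f:\PP^1\to C\simeq\PP^1$ is a double cover, and Riemann--Hurwitz forces two branch points, so $df$ vanishes somewhere; this contradicts unramifiedness. Hence $d=1$, and $f:\PP^1\to C$ is a birational morphism onto a smooth curve, so it is an isomorphism. Composing with the closed embedding $C\subset Y$ shows that $f$ is a closed immersion. This step should need no separate argument beyond checking the scheme-theoretic image, since $C$ is reduced and $f$ factors through it.

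Finally, a balanced map is unramified and has contact degree two by the observations after Definition~\ref{cf:balanced-definition}. So it is a closed immersion by the previous step. Its normal bundle is $\operatorname{coker}(df)\simeq\OO(1)^{\oplus2n}$, and this is identified with $N_{C/Y}$ because $f$ is an embedding.

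The argument has no serious obstacle. The only input of substance is very ampleness of $L$ from Theorem~\ref{cf:main}; everything else is elementary classification of curves of degree at most two and Riemann--Hurwitz. The statement makes no assumption on $\Pic(Y)$, so the two classical models must be covered too. This is automatic, because Theorem~\ref{cf:main} includes them: there $L$ is very ample directly, since $\OO(2)$ on $\PP^{2n+1}$ and $\OO(1)$ on $\PP(T^*\PP^{n+1})$ are very ample.
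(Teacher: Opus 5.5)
Your proposal is correct and follows essentially the same route as the paper: very ampleness of $L$ from Theorem~\ref{cf:main}, the factorization $2=\deg(f:\PP^1\to C)\cdot\deg_L C$, Riemann--Hurwitz to exclude the double cover of a line, and the conclusion that a birational map onto the smooth conic is an isomorphism. The one minor difference is how you show the degree-two image is planar: you use a hyperplane-section count, whereas the paper notes that the coordinate sections of $\iota\circ f$ lie in the three-dimensional space $H^0(\PP^1,\OO(2))$.
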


\begin{proof}
By Theorem~\ref{cf:main}, $L$ is very ample. Let
$\iota:Y\hookrightarrow\PP(H^0(Y,L)^*)$ be its associated embedding.

The coordinate sections of $\iota\circ f$ lie in
$H^0(\PP^1,\OO(2))$, so its image spans at most a plane. Moreover,
\[
2=\deg f^*L=\deg(f:\PP^1\to C)\,\deg_L C.
\]
Thus $\iota(C)$ is a line or an integral plane conic. Over $\CC$,
both are smooth, so $C$ is normal. In the conic case, $f$ is a finite
birational morphism onto a normal curve, hence an isomorphism onto
$C$. The remaining case is a double cover of a line, which is
ramified by Riemann--Hurwitz. Hence an unramified $f$ is a closed
immersion.

Every balanced map has contact degree two and is unramified, with
$N_f\simeq\OO(1)^{\oplus2n}$, as noted after
Definition~\ref{cf:balanced-definition}. Applying the preceding
assertion proves the final claim.
\end{proof}

\begin{proof}[Proof of Corollary~\ref{cf:wolf}]
\phantomsection\label{cf:proof-wolf}
The twistor space $(Y,D)$ of $(X,g)$ is a smooth projective contact
Fano manifold \cite[Theorem~2.1]{LeBrun}. By
Theorem~\ref{cf:main}, it is homogeneous under contact automorphisms.
The classification of homogeneous contact manifolds
\cite{Wolf,Beauville} identifies it with the twistor space of a Wolf
space. LeBrun's uniqueness theorem
\cite[Corollaries~3.3--3.4]{LeBrun} then identifies the original
quaternionic--K\"ahler metric with the Wolf-space metric up to a
positive constant factor.
\end{proof}

\begin{proof}[Proof of Corollary~\ref{cf:ke-polystability}]
\phantomsection\label{cf:proof-ke-polystability}
As shown in the proof of Theorem~\ref{cf:main}, $Y=G/P$ is a rational
homogeneous variety. It therefore admits an invariant K\"ahler--Einstein
metric of positive Ricci curvature
\cite[\S5, p.~172]{Matsushima72}. K-polystability then follows from
\cite[Theorem~1.1]{Berman16}.
\end{proof}

\begin{proof}[Proof of Corollary~\ref{cf:embedded-bt}]
\phantomsection\label{cf:proof-embedded-bt}
Proposition~\ref{cf:degree-two-images} gives embeddedness and the
normal bundle for every balanced map. In the primitive Picard case,
Corollary~\ref{cf:every-point} and Proposition~\ref{cf:bt-embedding}
already give transverse quasi-lines through every point by incidence
and descent.

For $n=0$, take $C=Y\simeq\PP^1$. For $n\ge1$,
Corollary~\ref{cf:ke-polystability} and LeBrun's correspondence
\cite[Theorem~A]{LeBrun} identify $(Y,D)$ with a twistor space.
By \cite[Theorem~2.1(ii)--(iii)]{LeBrun}, its fibres are embedded
copies of $\PP^1$ with normal bundle $\OO(1)^{\oplus2n}$, and
$D$ is the horizontal distribution. Hence each fibre $C$ satisfies
$TY|_C=TC\oplus D|_C$ and
$\theta|_{TC}:TC\xrightarrow{\sim}L|_C$, so
\[
TY|_C\simeq\OO_{\PP^1}(2)\oplus\OO_{\PP^1}(1)^{\oplus2n}.
\]
This proves the twistor assertion and also supplies the transverse
curves through every point in the classical cases.
\end{proof}

\section*{Acknowledgments}

The first named author is supported by Overseas High-Level Young Talent Recruitment
Programs and is partially supported by the Chongqing Natural Science Foundation Innovation
and Development Joint Fund CSTB2023NSCQ-LZX0031. The second named author is partially
supported by the National Key Research and Development Program of China (Grant No.
2024YFA1013201), the Natural Science Foundation of Chongqing (Grant No.
CSTB2024NSCQ-LZX0040), and the Special Project of Chongqing Municipal Science and Technology
Bureau (Grant No.\ 2025CCZ015). The third named author is sponsored by the National Key R\&D
Program of China (Grant No.\ 2024YFA1013200) and the National Natural Science Foundation of
China (Grant No.\ 12671100).

\end{document}